\def\Nn{{\mathbb N}} % natural numbers; Here {1,2,3,...}
\def\Zz{{\mathbb Z}} % integers
\def\Qq{{\mathbb Q}} % rationals
\def\Rr{{\mathbb R}} % reals
\def\Aa{{\mathcal A}} % alphabet
\def\genset{\ensuremath{\mathcal S}} %generating set
\def\deriv{{\text{\rm\dh}}}
\def\dispG{\deriv{\match}}
\def\dispP{\deriv{P}}
\def\matchpop{\match_{\popfunc}}
\def\match{{m}}
\DeclareMathOperator\state{\sf state} % the state function of shelling
\DeclareMathOperator\dist{d} %distance
\def\popfunc{{\wp}}
\def\local#1{\deriv #1}
\def\nbhd{{\mathcal N}}
\def\sft{\Omega}
\def\sftS{\sft_S}
\def\sftP{\sft_P}
\def\sftPre{\sft_0}
\def\density{\Delta}
\def\To{{\rightarrow}}
\def\Mm{{\mathcal M}} 
\def\mioi{{[-1..1]}}
\def\interval{{I}}
\def\infint{{I_\infty}}
\def\infint{{\Zz_{\geq 0}}}
\def\Aamax{{\Aa_\text{max}}}
\def\Aabig{{\Aa_\text{big}}}
\def\Aamin{{\Aa_\text{min}}}
\def\mubig{{\mu_{\text{big}}}}
\def\proj{{\Pi}}
\def\tilga{{\tilde{\gamma}}}
\def\gap{{\gamma'}}
\def\tta{{\tt a}}
\def\ttb{{\tt b}}
\def\ttc{{\tt c}}
\newtheorem{thm}{Theorem}[section]
\newtheorem{lem}[thm]{Lemma}
\newtheorem{prop}[thm]{Proposition}
\newtheorem{definition}[thm]{Definition}
\newtheorem{cor}[thm]{Corollary}
\newtheorem{clm}[thm]{Claim}
\theoremstyle{definition}
\newtheorem{rmk}[thm]{Remark}
\newenvironment{sumsubscripts}{{\fontdimen16\textfont2=5pt
\fontdimen17\textfont2=5pt}} {{\fontdimen16\textfont2=3pt
\fontdimen17\textfont2=3pt}}
\def\pparagraph#1{{\bf #1}}
\title[strongly aperiodic SFTs on hyperbolic groups]{Strongly aperiodic subshifts of finite type \\ on hyperbolic groups}
\author{David B. Cohen}
\address{The University of Chicago,
Department of Mathematics,
5734 S University Ave,
Chicago, IL 60637}
\email{davidbrucecohen@gmail.com} 
\author{Chaim Goodman-Strauss}
\address{Department of Mathematical Sciences,  University of Arkansas, Fayetteville, AR 72701} 
\email{strauss@uark.edu} 
\author{Yo'av Rieck}
\address{Department of Mathematical Sciences,  University of Arkansas, Fayetteville, AR 72701} 
\email{yoav@uark.edu}
\date{\today}
\definecolor{gray}{gray}{0.6}
\begin{document}
\maketitle

This paper is devoted to proving the following theorem.

{\bf Theorem.} {\em A hyperbolic group admits a strongly aperiodic subshift of finite type if and only if it has 
at most one end.}

 We introduce the subject in Section~\ref{Section:Introduction} and give an informal outline in Section~\ref{Section:Outline}. In
 Section~\ref{section:Definitions}, we formally define our terms and set up the proof, which is a combination of the results of 
 Sections~\ref{section:Definitions}--\ref{subsection:aperiodicity} as follows:

\begin{proof}[Proof of the Theorem] Propositions~\ref{lemma:SFTP},~\ref{Cor:PopShellingsExist}, and~\ref{Prop:Aperiodic} show that any one-ended hyperbolic group $G$ 
admits a non-empty subshift of finite type in which no configuration  has an infinite order stabilizer. 
By Proposition~\ref{prop:NoTorsion}, $G$ admits a subshift of finite type in which no configuration
has a stabilizer of finite order. Proposition~\ref{prop:CombiningSFTs} shows that the product of 
these subshifts is a strongly aperiodic subshift of finite type on $G$.

By Proposition~\ref{prop:NoTorsion} every zero-ended group (that is, every finite group) 
admits a strongly aperiodic subshift of finite type, 
and~\cite{DavidCohen17} shows no group with two or more ends
admits such a subshift.
\end{proof}

  \section{Introduction}\label{Section:Introduction}

 Loosely speaking, a strongly aperiodic subshift of finite type on a group $G$ is given by a finite set of local rules for decorating $G$, so that all global symmetry is destroyed. That is, a finite collection of locally checkable rules 
 ensures that any pair of points have finite  neighborhoods that are decorated distinctly. In many settings, such as on $\Zz^2\subset\Rr^2$, subshifts of finite type are essentially the same phenomenon as matching rule tiling spaces, which are each determined by a given finite set of marked-up tiles, such as the Penrose tiles.\footnote{In any appropriate setting, each subshift of finite type can be interpreted as a matching rule tiling space, each configuration in the subshift being a tiling in the tiling space. The converse is not necessarily the case.~\cite{Radin94}} The two areas arose in different ways but soon became linked:

H. Wang~\cite{Wang61} interpreted remaining cases of Hilbert's {\em Entscheidungsproblem}  in the foundations of logic as being about how square tiles with marked edges might fit together in $\Zz^2$.  As an aside, Wang asked whether one can algorithmically decide 
 the ``domino tiling problem'': {\em Can a given finite set of tiles be used to form a tiling?}

Wang pointed out that if (in his or any appropriate setting) the tiling problem were in fact undecidable, then there must exist aperiodic sets of tiles.\footnote{
If there were not an aperiodic set of tiles, every set of tiles would either not tile the plane (and so have some maximum sized disk that can be tiled) or would admit a periodic tiling (and so have some finite fundamental domain). By enumerating finite configurations, one eventually determines which, deciding the problem. Note that the undecidability of the tiling problem in fact implies {\em weak} aperiodicity, but as it happens, there is no distinction in the Euclidean plane. } Soon R. Berger proved the tiling problem undecidable in $\Zz^2$ and gave the first examples of these aperiodic sets.~\cite{Berger66,Robinson71} 

The ``tiling problem'' for SFTs on a group $G$ asks whether a given set of local rules determine a nonempty subshift, that is whether there exists  a decoration of $G$ satisfying the local rules. Berger's result showed that this problem is undecidable when $G=\Zz^2$.

\pparagraph{Subshifts.}   
Given a finite  set of ``markings'' $\Aa$, the set $\Aa^G$ consists of all possible ways to mark $G$ 
by  $\Aa$ (we give precise definitions in Section~\ref{subsectionSFTs}). Equipped with the product topology and the $G$-action given by shifting coordinates, 
$\Aa^G$ is known as the {\em full shift} on $G$ and  its closed $G$-invariant subsets are known as {\em subshifts}.
We refer to elements of an SFT as {\em configurations}.  
Subshifts are an essential tool in the study of dynamical systems;
every $0$-dimensional expansive
system on $G$ is a subshift \cite[Proposition 2.8]{CoornaertPapadopoulosBook}, every expansive system is a factor of a subshift \cite[Proposition 2.6]{CoornaertPapadopoulosBook}, and if $G$ is nonamenable, 
a theorem of Seward~\cite[Theorem 1.2]{SewardNonamenable} 
shows that every topological dynamical system over $G$ is a factor of a subshift.

 \pparagraph{Subshifts of finite type.}  A subshift of finite type (SFT) is a subset of $\Aa^G$ obtained by ``forbidding'' (or, equivalently, ``allowing'') some finite set of patterns. A pattern is a function from some finite $F\subseteq G$ to $\Aa$. We say that a pattern $p:F\To\Aa$ appears in $\omega:G\To\Aa$ if there is some $g\in G$ such that $\omega(gf)=p(f)$ for all $f\in F$.
 That is, given some finite collection $\mathcal{F}$ of forbidden patterns, if $\Omega\subset\Aa^G$ consists of all $\omega$ in which no $p\in\mathcal{F}$ appears, then $\Omega$ is said to be an SFT. 
 For example, if $\Aa=\{0,1\}$ and $G=\Zz$, the set of all $\omega\in\Aa^\Zz$ such that $(\omega(n),\omega(n+1))$ is never equal to $(1,1)$ forms an SFT. As expected, SFTs are subshifts, and while the converse is false, every subshift can be obtained by forbidding some (usually infinite) set of patterns.

\pparagraph{Weak aperiodicity} was not recognized until after Mozes' definition of {\em strong aperiodicity} in~\cite{Mozes97}, in which he gives examples  of both kinds. An SFT is weakly aperiodic if it is non-empty and the $G$-orbit of every configuration is infinite --- that is, an infinite subgroup of $G$ is allowed to fix a configuration, provided it has infinite index.

Similarly,  a set of tiles is weakly aperiodic if it is possible to form a tiling with congruent copies of them, but never a tiling with a  compact fundamental domain. However, as suggested in the figure on page~\pageref{pageWithGrowthPicture}, such tiles might admit a tiling with an infinite cyclic symmetry. 

In hindsight, weak aperiodicity had  often appeared earlier --- indeed, in a given setting, it is weak aperiodicity that is implied by the undecidability of the tiling problem.  
Block and Weinberger constructed a weakly aperiodic tile set for any nonamenable cover of a compact Riemannian manifold \cite{BlockWeinberger92}.
In the setting of hyperbolic groups, weakly aperiodic SFTs were constructed by 
Gromov~\cite[\S 7.5, 7.6, 8.4]{Gromov87} and Coornaert and  Papadopoulos~\cite{CoornaertPapadopoulosBook};
these SFTs exist on  any  
hyperbolic group but are \em never \em strongly aperiodic.

S. Mozes~\cite{Mozes97} gave  weakly aperiodic tilings on   rank-1 symmetric spaces, by decorating tiles shaped like the fundamental domain of one lattice with information about how it may interact with the tiling by fundamental domains of another,  incompatible lattice, and 
applying Mostow rigidity to prove weak aperiodicity.

\pparagraph{Strong aperiodicity.} On the other hand, an SFT  is said to be strongly aperiodic if it is nonempty and the $G$-action upon it  is free, meaning that no element of $G$ fixes any configuration (some authors allow configurations with finite stabilizers). 
Similarly, a set of tiles is strongly aperiodic if it does admit a tiling, but only tilings that have no symmetry whatsoever
 (some authors allow tilings with finite symmetry).

 Wang himself conjectured that aperiodicity (of any kind) was absurd, but the first strongly aperiodic sets of tiles soon appeared in $\Zz^2$~\cite{Berger66,Robinson71} and many others have been found  since, mostly based on R. Berger's initial use of hierarchically arranged structures~\cite{Mozes89,CGS98, FerniqueOllinger10}, or the theory of quasicrystals stemming from  N.G. De Bruijn's higher dimensional analogue of Sturmian sequences~\cite{DeBruijn81A, DeBruijn81B}.  J. Kari gave a third model~\cite{Kari96},  
which was adapted to give the first strongly aperiodic tilings of ${\mathbb H}^n$~\cite{CGS2005}. We will give a list of groups known to have strongly aperiodic SFTs momentarily, but first we survey groups known not to have such subshifts.

\pparagraph{Obstructions to the existence of a strongly aperiodic SFT.} 
To see that $\Zz$ has no strongly aperiodic SFT, let $\Omega\subset\Aa^\Zz$ be a nonempty SFT, and consider any $\omega\in\Omega$. Because there are only finitely many possible words of a given length in $\Aa$, we see that $\omega$ contains a subword of the form $uvu$ for some words $u$ and $v$ which are longer than all of the forbidden patterns used to define $\Omega$. But then it is easy to check that $\ldots uvuvuv\ldots$ defines a periodic configuration in $\Omega$.
This was extended to all free groups by~\cite{piantadosi}.

The above argument was  generalized by Cohen~\cite{DavidCohen17}, who showed that any group $G$ with at least two ends admits no strongly aperiodic SFT.
Additionally, Jeandel~\cite[Proposition 2.5]{jeandel} has shown that any  recursively presented group with undecidable word problem does not admit a strongly aperiodic SFT.
These are the only known obstructions and we naturally ask:

{\bf Question: }{\em 
Does there exist a one ended finitely generated group with decidable word problem that does not
admit a strongly aperiodic SFT?}

\pparagraph{Groups known to have a strongly aperiodic SFT.}  Whether or not a group admits  a strongly aperiodic SFT is a 
quasi-isometry invariant under mild conditions \cite{DavidCohen17}, and a commensurability invariant \cite{CarrollPenland}.  

\begin{itemize}
\item As above, Berger \cite{Berger66} showed that $\Zz^2$ itself  admits a strongly aperiodic SFT. More generally, \cite{jeandelpolycyclic} has shown that polycyclic groups admit strongly aperiodic subshifts of finite type.
\item Work of Barbieri and Sablik \cite{barbierisablik} shows that any group of the form $\Zz^2 \rtimes H$, where $H$ has decidable word problem, admits a strongly aperiodic SFT. This is a very broad collection of groups since it includes $\Zz^2\times H$ for any $H$ with decidable word problem, as well as the group $\text{Sol}^3\cong\Zz^2\rtimes \Zz$.
\item Work of Mozes implies that uniform lattices in simple Lie groups of rank at least 2 admit strongly aperiodic SFTs. \cite{Mozes97}
\item Work of Jeandel shows that, $\Zz\times T$ admits a strongly aperiodic SFT, where $T$ denotes Thompson's group $T$. (In fact, Jeandel shows that $\Zz\times H$ admits a strongly aperiodic SFT whenever $H$ acts on the circle in a way which satisfies certain dynamical conditions.) \cite{jeandel}%undecidable word problem paper.
\item Work of the first two authors \cite{CohenGS} shows that the fundamental groups of hyperbolic surfaces admit strongly aperiodic SFT.
\item Barbieri shows that the direct product of any three  infinite, finitely generated groups with decidable word problem admits a strongly aperiodic SFT; the Grigorchuck group is an example~\cite{BarbieriArXiv}.
\end{itemize}

Note that, with the exception of surface groups, all known examples of strongly aperiodic SFTs are on groups which have direct product of infinite groups as a subgroup.\footnote{Though in ${\mathbb H}^n$ there are constructions of strongly  aperiodic sets of tiles, these do not give rise to SFTs on lattices.} 
There remain many naturally occurring groups, including mapping class groups, ${{\textrm{Out}F_n}}$, some Coxeter groups, and non-uniform lattices in higher rank (like $SL(n;\Zz)$), for which it is unknown whether strongly aperiodic SFTs exist. In this paper we address the case of hyperbolic groups.

\pparagraph{Hyperbolic groups.} 
Hyperbolic groups are groups whose Cayley graphs satisfy a geometric ``slim triangles'' condition which holds in hyperbolic space (see Section~\ref{subsectionHyperbolicGroups} for definitions). These groups are quite well behaved---for example, they are always finitely presented and have decidable word problem. The class of hyperbolic groups includes fundamental groups of closed hyperbolic manifolds, free groups, so-called ``random groups'' (with high probability), groups satisfying certain geometric small cancellation conditions, and many Coxeter groups.

Groups acting discretely on hyperbolic space have been studied for over a century. M. Dehn~\cite{Dehn} constructed \em Dehn's algorithm \em
to decide the word problem in surface groups, where  
by Dehn's algorithm we mean  any rewriting system that shortens a given word monotonically, ending with the empty word exactly when the given word represents the identity of the group.   
Another classic property of surface groups is that their growth rate is exponential.
These were slowly generalized:
In 1968 J. Milnor~\cite{Milnor} showed that under certain negativity assumptions on the curvature of a closed manifold, the growth rate of its fundamental group is  exponential.  Cannon studied geometric and algorithmic properties of discrete subgroups of hyperbolic isometries~\cite{Cannon1984,Cannon1991}, laying the groundwork for shortlex automata soon implicit in~\cite{Gromov87} 
and taking center stage in~\cite{wpig}. Finally Gromov~\cite{Gromov87} defined \em hyperbolic 
groups\em, which include fundamental groups of closed negatively curved manifolds, 
showing that they have exponential growth and are the \em only \em groups in which Dehn's algorithm can be used.

 Our main theorem gives a strongly aperiodic subshift on  any one-ended hyperbolic groups; this resolves the question above for all hyperbolic groups.
Note that no hyperbolic group contains a product of infinite groups.

\section{Outline of the proof}\label{Section:Outline}
In this section we give an informal overview to facilitate reading the paper. In Section~\ref{section:Definitions} we define our terms more precisely.

Suppose that $G$ is a one ended hyperbolic group. It is not hard to show (Proposition~\ref{prop:NoTorsion}) that $G$ admits an SFT where no finite order element fixes a configuration. Our main goal is thus to find an SFT where no infinite order element fixes a configuration, since by Proposition~\ref{prop:CombiningSFTs} we could then take a product of these subshifts and obtain a strongly aperiodic SFT on $G$. This goal will be fulfilled by the populated shellings defined in \S\ref{Section:PopulatedShellings}, or more precisely, by the set of all local data associated to populated shellings. Roughly speaking, this attack combines two key ideas from the literature.

\begin{itemize}
\item Shortlex shellings, defined in \S\ref{section:shortlexShellings}, are inspired by the SFTs used in \cite{CoornaertPapadopoulosBook} and \cite{Gromov87} to ``code'' the boundary of a hyperbolic group. The set of local data of shortlex shellings forms a nonempty SFT for which the stabilizer of every configuration is virtually cyclic.
\item Incommensurability of growth rates is the key tool used in \cite{CohenGS} to ``kill'' infinite cyclic periods on certain subshifts on surface groups (by decorating these subshift with extra data.)
\end{itemize}

\pparagraph{Incommensurability.}
 Fundamentally, as in~\cite{CGS2005, CohenGS}, our construction rests on the incommeasurability of two distinct exponential growth rates (one arising as the growth of $G$, the other  arbitrarily taken to be 2 or 3.) The illustration below demontrates a similar phenomenon in the hyperbolic plane (drawn in the ``horocyclic model'': vertical distances are accurate and horizontal ones scale exponentially with height; horozontal lines are horocycles).

\vspace{\baselineskip}
\centerline{\includegraphics[scale=1.2]{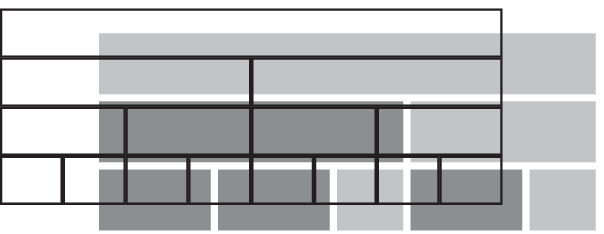}}\label{page:orbitgraphfigure}

 Two patterns of ``rectangles'' are shown, each rectangle having some predecessor above and some successors below. In the pattern drawn with dark lines, the number of rectangles doubles from row to row. In the gray pattern, light rectangles (which are all congruent) have one light and one dark rectangle as successors, and dark rectangles (which are all congruent) have one light and two dark successors. This system, asymptotically, has growth rate of $\phi^2=((1+\sqrt{5})/2)^2$ ($\phi$ is the golden ratio). The ratio of the spacing from row to row in either system is precisely fixed in relation to the other: $\log 2/\log \phi^2$. As this is not rational, the exact pattern of overlaps can never quite repeat from row to row.

 By \cite[Main Technical Lemma]{CohenGS} one may 
 produce a strongly aperiodic tileset by decorating the gray tiles with the possible combinatorial data describing how they intersect the other tiling, such as how many dark lines intersect each edge of a gray tile, and requiring these  decorations to match from tile to tile. More specifically, the sequence $\Delta_i$ consisting of the number of  horizontal dark lines meeting the $i$-th row of gray tiles could not be a periodic sequence, precisely because $\frac{\log 2}{\log(\phi^2)}$ is irrational.

We will exploit this idea in our construction. Roughly speaking, we will be using ``shortlex shellings'' to provide the underlying weakly aperiodic scaffolding (analogous to the gray tiling), on which we will place a second structure with incommeasurate growth rate, ``populated shellings''.

\subsection{Shortlex shellings.}  In Section~\ref{section:shortlexShellings}, 
we construct subshifts of finite type, much in the 
style of Coornaert and Papadopoulos~\cite[\S 3,4]{CoornaertPapadopoulosBook} and Gromov \cite[\S 7.5, 7.6, 8.4]{Gromov87}, 
which parameterize objects we call \em shortlex shellings\em\:(Definition \ref{definition:shortlexshelling}).
A shortlex shelling assigns some data to 
each element of $G$. These data impose two simultaneous, compatible structures on $G$: a decomposition into 
horospherical layers (i.e., layers which are locally modeled on spheres in $G$), and a spanning forest locally 
modeled on the tree of shortlex geodesics. 
We informally describe this here:

Given an arbitrary finitely generated group, 
with an ordered finite generating set,
every group element \(g\) is represented by a unique word 
that is, first, a shortest representative of \(g\) (that is, a geodesic) and second, earliest in the lexicographic
ordering among all such geodesics (that is, a shortlex geodesic).
In hyperbolic groups, the shortlex geodesics form a regular language, accepted by a ``shortlex finite state automaton''.

We  define a model shelling, $X_0$: to each group element $g\in G$ we associate the integer 
$h_0(g)=\dist(g,1_G)$, the  state $\state_0(g)$ of $g$ in the shortlex FSA, and, for $g\neq 1_G$, 
$P_0(g)$, the unique element of \(G\) that precedes $g$ on its shortlex geodesic from $1_G$.
A shortlex shelling is a function $X=(h,\state,P)$ modeled on   
\((h_{0},\state_{0},P_{0})\) away from the identity (up to an additive constant for \(h\)).  This means that on every finite subset $F\subset G$, the restricition of $X$ to $F$ behaves the same as the restriction of $X_0$ to some translate of $F$ which doesn't contain the identity, up to adding some constant integer, depending on $F$, to $h$.

A shortlex shelling $X=(h,\state,P)$ is encoded by 
``local data'' \(\local X = (\deriv h,\state,\dispP)\), 
a function from $G$ to a fixed finite set, where
(for \(g \in G\) and \(a \in \genset\), a finite generating set for \(G\))
\(\deriv h(g):\genset \to \{-1,0,1\}\) is the derivative of $h$, defined as
\[
\deriv h(g)(a) := h(ga) - h(g)
\]
and \(\dispP: G \to \genset\)
is defined by taking $\dispP(g)$ to be the generator \(a\) that takes us from \(g\) to \(P(g)\), that is, \(P(g) = ga\). We refer to level sets of $h$ as horospheres (of $X$).

We will construct local rules that are satisfied exactly by the local data of shortlex shellings,
showing that the set $\{\local X:X\:\text{is a shortlex shelling}\}$ forms a nonempty SFT  (Proposition \ref{proposition:LocalXisSFT}).  This SFT factors onto \(\partial G\),
the Gromov boundary of \(G\), as do the subshifts suggested by 
Gromov~\cite[\S 7.5, 7.6, 8.4]{Gromov87} and those constructed by
Coornaert and Papadopoulos~\cite[\S 3, 4]{CoornaertPapadopoulosBook}.
{
Since points of $\partial{G}$ have virtually cyclic stabilizers, it follows that these subshifts are all weakly aperiodic,
more specifically, the
stabilizer of any configuration in any of these subshifts is virtually 
cyclic and hence has infinite index (recall that $G$ is one ended).}  
However, for any hyperbolic group,
each of these subshifts admits an element with infinite cyclic stabilizer and is \em not \em  
strongly aperiodic.

To that end we construct populated shellings.

\subsection{Populated shellings.} In Section~\ref{Section:PopulatedShellings}, we begin by fixing $q\in\{2,3\}$ such that no power of 
$q$ is a power of the growth rate $\lambda$ of our shortlex machine; we say that \(q\) and \(\lambda\) are \em incommensurable\em.  
We are going to define ``populated shellings'', which decorate shortlex shellings with some extra data in order to kill any 
infinite cyclic periods, obtaining strong aperiodicity.  
In particular, a populated shelling of $G$ consists of the following data.

\begin{itemize}
\item a shortlex shelling $X=(h,\state,P)$
\item a ``population'' function $\popfunc:G\To \{0,\ldots,n\}$, for fixed $ n \in \Nn$
\item a ``population growth'' function $\density$ constant on horospheres of $X$
\item and a  ``parent-child matching'' function $m$
\end{itemize}

We further require that this data satisfies the following local rules.
We think of vertices of $G$ as being villages, some of which are inhabited by 
people---$\popfunc(g)$ tells us the number of people living in $g$. Each person has some children who live nearby 
(at a bounded distance) in the next horosphere of $X$, and $m$ describes this relationship, matching each child to its 
parent. Each person has exactly one parent, and a person living at some $g\in G$ has $q^{\Delta(g)}$ children.

We suggest this in the drawing below, with each parent living in a village in the lower horosphere having three children nearby in the next successive horosphere.

\centerline{\includegraphics[scale=.25]{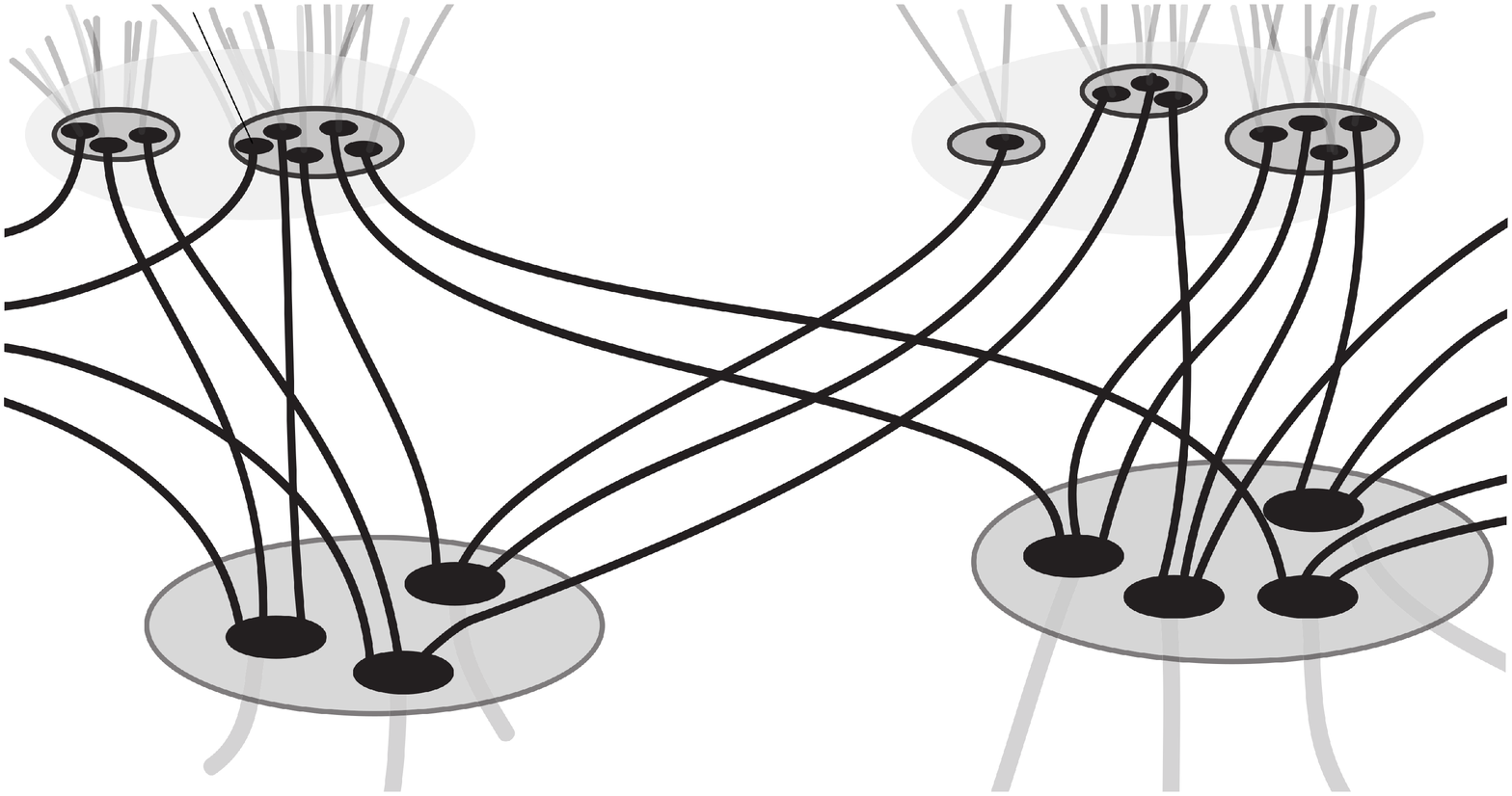}}

For a populated shelling $Y$, all of this information may be encoded by a function $\local Y$, called the ``local data'' of the populated shelling, from $G$ to a fixed finite set. Furthermore, Proposition \ref{lemma:SFTP} says that there exists a certain set of local rules such that the functions which satisfy these rules are exactly the local data of populated shellings. In other words, the set of all possible local data of populated shellings forms a SFT.

To prove our theorem, we  show
\begin{itemize}
\item that populated shellings exist (Proposition~\ref{Cor:PopShellingsExist}),
\item and that their local data cannot have infinite order periods (Proposition~\ref{Prop:Aperiodic})
\end{itemize}

\pparagraph{Infinite order periods.} We use the values of
\(\Delta\) to show that no populated shelling admits an infinite order period.   
Recall that \(\Delta\) was defined on group elements and required to be constant along on horospheres.  As the horospheres 
naturally form a sequence, the values of \(\Delta\) inherit a structure of a sequence { 
\((\Delta_i)\)}.  We will show that this sequence is not periodic,
and that this implies that there are no infinite order periods (this idea dates back to~\cite{Kari96}).
In Section~\ref{subsection:aperiodicity} we will see that for certain finite sets $S$, 
the cardinality of $P^{-n}(S)$ must grow as $\lambda^n$. On the other hand,
using the fact that quasi geodesics stay close to geodesics in a hyperbolic group,
Lemma \ref{LemmaDesc} will show that

\begin{itemize}
\item a sufficiently large finite set $S$, contained in a single horosphere, 
contains a person all of whose descendants live in 
$P^{-n}(S)$; therefore the population of $P^{-n}(S)$ grows at least 
as fast as $q^{\sum\Delta}$ (the number of descendants of that person).\footnote{
By \(\sum\Delta\) we mean the sum of the values of {\((\Delta_i)\)} along the \(n\) horospheres starting with the horosphere
containing \(S\).}
\item all descendants of people in $S$ live in $P^{-n}(S')$ for some finite set $S' \supset S$, which will imply 
that the population of $P^{-n}(S)$ grows at most as fast as {$K q^{\sum\Delta}$ 
(the number of descendants of the population of \(S'\); here \(K>0\) is the total population of \(S'\)).}
\end{itemize}

From this, it easily follows that $\frac{1}{n}\sum\Delta\log(q)\To\log(\lambda)$, which implies that the sequence  {$(\Delta_i)$} cannot be periodic by our incommensurability hypothesis.  Lemma \ref{lemma:shortlexPeriodic} implies that {$(\Delta_i)$}  would be periodic if the populated shelling $Y$ had a period of infinite order, so we conclude (Proposition \ref{Prop:Aperiodic}) that $Y$ has no infinite order period.

\pparagraph{Existence.} In Subsection~\ref{subsection:existence} we show that populated shellings exist (Proposition \ref{Cor:PopShellingsExist}), using the following strategy.
\begin{itemize}
\item We construct a sequence $(\nu_i,\density_i)$ such that {each $\nu_i\in[A,qA]$ for an arbitrary fixed  $A$,  and $\density_i \in\{\lfloor{\log_q\lambda}\rfloor, \lceil{\log_q\lambda}\rceil\}$, satisfying
$ q^{\density_i} \nu_i =\lambda \nu_{i+1} $. } In the figure on page~\pageref{page:orbitgraphfigure}, this  $\density_i$ is analogous to the number of dark horizontal lines meeting the $i$-the row of the gray tiling, while $\nu_i$ is analogous to the average frequency of dark-outlined tiles meeting each gray tile in the $i$-th row of the gray tiling.
\item We show that, given such a sequence $\nu_i$, it is possible to populate horospheres so that the $i$-th horosphere has population density $\nu_i$. In particular, the sum of $\popfunc$ over any finite set in a horosphere is equal to $\nu_i$ times $\mu(S)$ up to error bounded by $2\mu(\partial S)$ ($\mu$ will be defined defined momentarily).
\item We use the Hall Marriage trick to show that when a density sequence is realized by a population function in this way, one may find a suitable parent child matching.
\end{itemize}

\subsection{Technical tools.} Let $X=(h,\state,P)$ be a shortlex shelling.

\pparagraph{Measure.}
 In order to regularize the growth of sets under $P^{-1}$ we describe a nonnegative function $\mu$ defined on states of the shortlex machine with the following properties:
\begin{itemize}
\item
$\mu$ of a state $a$ times $\lambda$ is equal to the sum of $\mu(b)$ over the states $b$ which may follow $a$ in the 
shortlex machine, so that for any $w\in G$ we have $$\sum_{v:P(v)=w} \mu\circ\state(v) =\lambda \left( \mu\circ\state(w)\right)$$
\item The set of vertices on which $\mu\circ\state$ is positive is dense.
\end{itemize}

In Section~\ref{subsection:GrowthofFSA}, similarly to~\cite{dfw},  we  produce a left eigenvector of the transition matrix of the shortlex machine, with eigenvalue $\lambda$, whose support consists of states 
with ``maximal growth''---that is, states whose number of $n$-th successors grows at the same rate as the group itself.
 In 
Section~\ref{Section:InvariantMeasure}, we confirm that such states are dense in every shortlex shelling.

\centerline{\includegraphics[width=.9\textwidth]{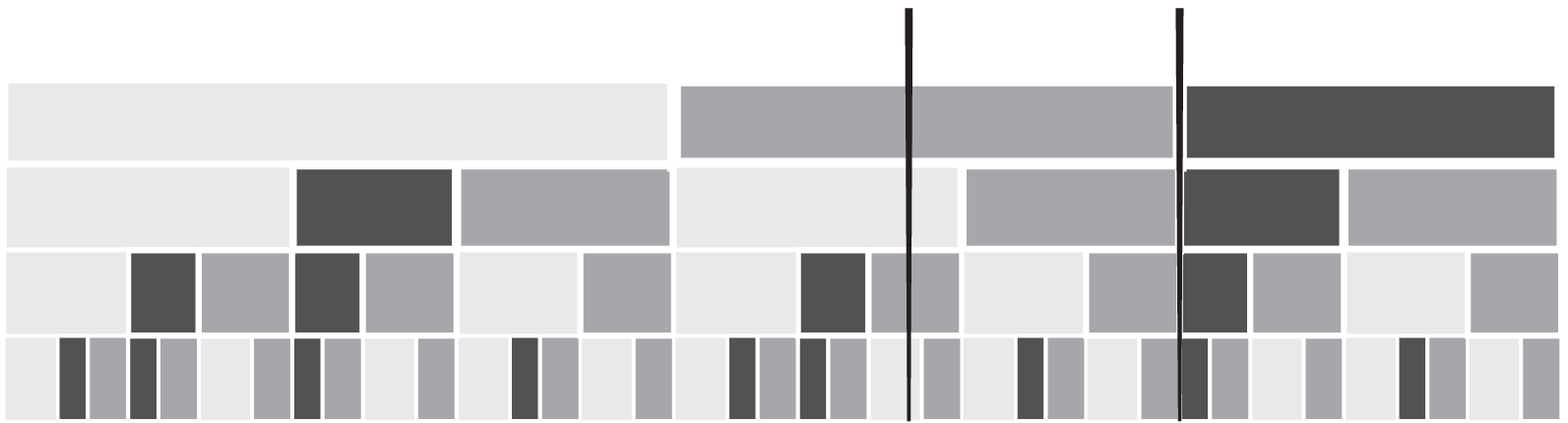}}
\label{pageWithGrowthPicture}

 In the figure above, we see a similar phenomenon in the (horocyclic model of the) hyperbolic plane: There are three types of ``rectangular'' tiles, representing three states, say  $\tta, \ttc$ and $\ttb$; the ways these tiles may fit together one above some others, represent the FSA  transitions $\tta\mapsto \tta, \ttb, \ttc$;  $\ttc\mapsto \tta, \ttc$, and  $\ttb\mapsto \ttb, \ttc$. The widths of each rectangle  are precisely in proportion to the left eigenvector of the corresponding  transition matrix.

(Two possible infinite cyclic symmetries are marked, in the middle shifting by 2 rows, and at right shifting by 1 row.  It is not possible for both to continue one more layer up. The relative heights of these tiles depends on the metric of the model on the page, but is fixed relative to  any tiling based  on another  FSA, as the ratio of the logs of their corresponding eigenvalues.)

\pparagraph{Divergence graphs.} In Section~\ref{section:divergenceMetric},  we define a graph structure on a horosphere $H=h^{-1}(n)$ known as the ``divergence graph'', 
where vertices are points $v\in H$ such that $\mu\circ\state(v)$ is positive, and two such vertices $v,w$ are connected by an edge exactly when their successor sets $P^{-n}\{v\},P^{-n}\{w\}$ remain at a bounded distance as $n\To\infty$.
 These divergence graphs have two  advantages:
 
 First, they  behave nicely under the successor map $P^{-1}$:  any pair of  vertices connected by an edge  will have some pair of successors that are also 
  connected by an edge.  In other words, every edge has one or more successor edges and either a vertex or an edge as itspredecessor, as  indicated in the  figure below, with a  larger, paler predecessor divergence graph in the background, on one horosphere, and a smaller, darker successor on the next horosphere.  
 
 \centerline{\includegraphics[scale=.18]{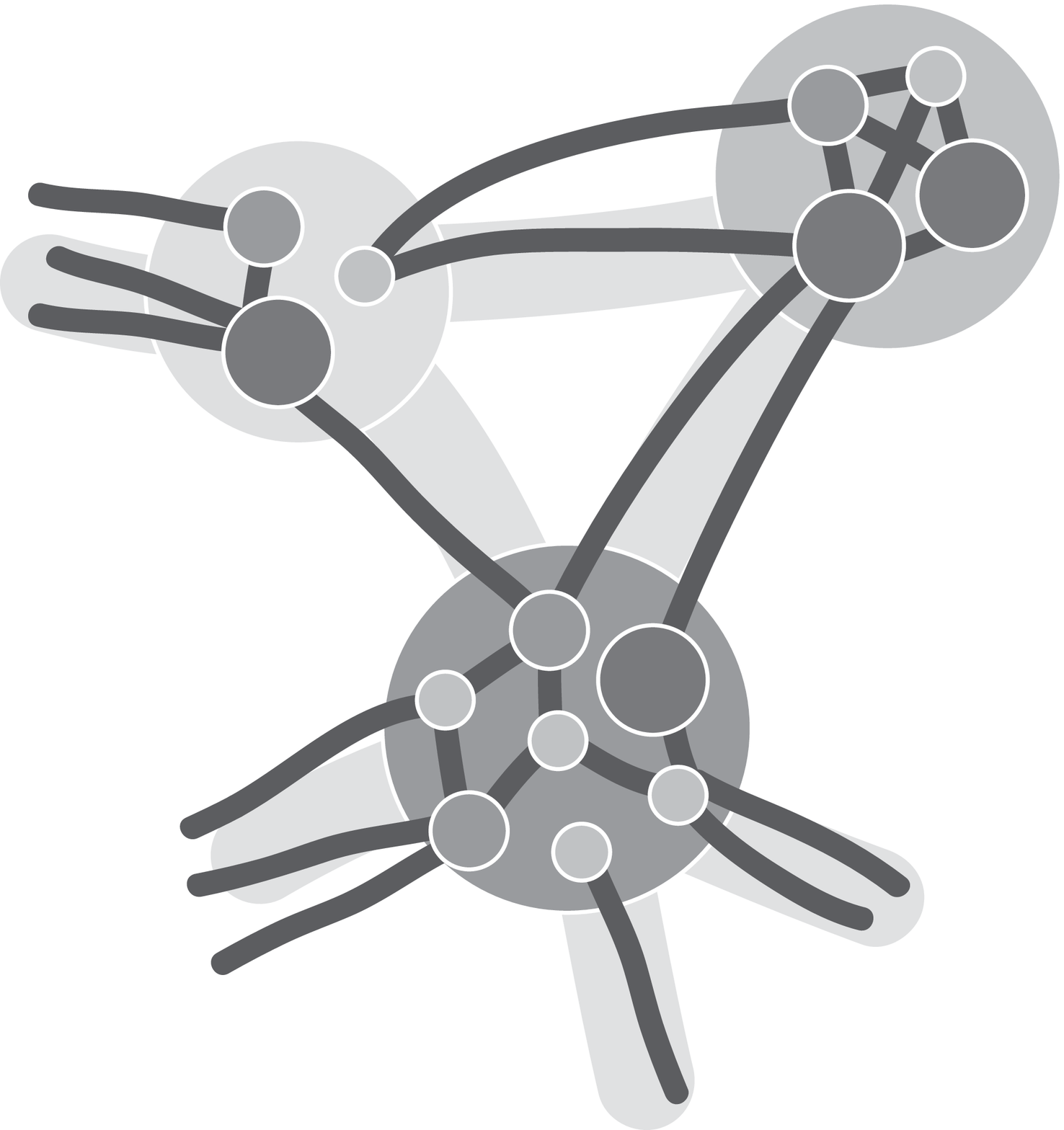}}

 Second, exactly when a hyperbolic group  is one-ended,  its divergence graphs are connected (Lemma~\ref{LemmaDivGraphConnected}), as we show using  
the
cutpoint conjecture (proved in~\cite{swarup}). This is necessary in our construction, in order to ensure that there are local rules which force the growth rate $\density$ to be constant on each particular horosphere.

\pparagraph{Translation-like actions.}  In  order to distribute the density of villagers about a horosphere, we shall use a translation-like action of 
$\Zz$ on the divergence---that is, we cover the vertices of the divergence graph with disjoint ``paths'' or injected images of $\Zz$. 

\centerline{\includegraphics[scale=.4]{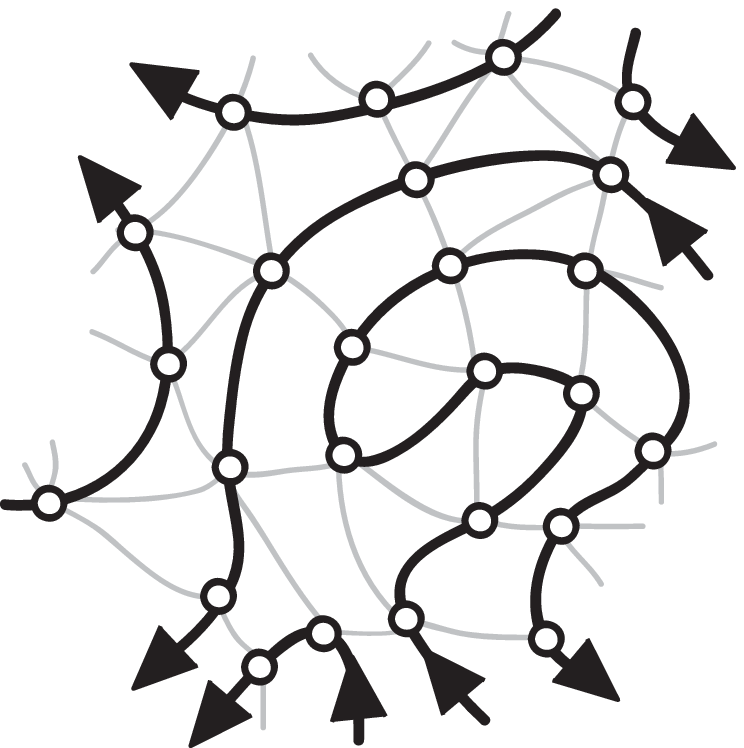}}

A theorem of Seward shows that this may be done on any one or two-ended connected graph with bounded degree, and in Section~\ref{section:TranslationlikeZactions}, we generalize this to any infinite connected graph with bounded degree.

\section{Set up}
\label{section:Definitions}

In this section, we establish our conventions and notation, and give foundational material for our construction. Subsection \ref{subsectionSFTs} recalls the definition of an SFT and explains why, in proving our main theorem, it is enough to give an SFT without infinite order periods. Subsection \ref{subsectionHyperbolicGroups} gives the definition of hyperbolic groups and their boundaries, as well as several lemmas describing their geometry which will be used throughout the sequel. Subsection \ref{subsection:GrowthofFSA} defines the shortlex automaton for a hyperbolic group and proves the important Proposition \ref{proposition:eigenvector}, which says that we may weight each state of the shortlex FSA so that states of maximal growth have positive weight and the sum of the weights of the successors of any state $a$ is equal to the growth rate of the group times the weight of $a$. Subsection \ref{subsection:horofunctions} defines the derivative of a $1$-Lipschitz function on a finitely generated group, and describes what we mean by ``horofunction''.

We take $\Nn:=\{1,2,3,\ldots\}$.  We denote the number of elements of a finite set $A$ by $\#A$.
We denote sequences as \((a_{n})_{n \in \Nn}\) (we sometimes write \((a_{n})\)).  
The notation $[a..b]$ denotes  the interval between $a$ and $b$ in $\Zz$, that is,
$$
[a..b] := \{n \in \Zz\ |\ a \leq n \leq b\}
$$
For infinite intervals, we write $\Zz_{\geq a}$, or $\Zz_{\leq b}$, or $\Zz$. 
For sums of  values of some function, say $f$, over 
set some set $R$, we write
$f_R := \sum_{x\in R} f(x)$. We may also write $f_{m..n}:=\sum_{k=m}^n f(k)$.

We work exclusively in the discrete setting:
A graph is a pair $(V(\Gamma),E(\Gamma))$.  The edges induce a metric on the 
vertices of a connected graph by setting $\dist(u,v) = 1$ whenever $u \neq v$ are 
vertices connected by an edge. 
  A 
\em geodesic \em is a (globally) metric preserving map \(\gamma:I \to X\), where \(I\) is an 
interval and \((X,\dist)\) is a metric space; that is, for any \(t_{1},t_{2} \in I\) we have 
\(\dist(\gamma(t_{1}),\gamma(t_{2})) = |t_{1} - t_{2}|\).

This paper is concerned with a fixed finitely generated group \(G\) with identity $1_G$ and a fixed finite generating
set \(\genset = \genset^{-1}\). 

As is customary we denote the set of finite words in letters of
\(\genset\) by \(\genset^{*}\) (this includes the empty word), and identify a word in \(\genset^{*}\)
with the corresponding product \(g \in G\) and say that \(w\) \em represents \em \(g\).  
Since \(\genset\) generates \(G\), this defines a map from \(\genset^{*}\)
onto \(G\).  We denote the length of a word \(w \in \genset^{*}\) by \(l(w)\) and for \(g \in G\) we set
\[
|g| := \min \{l(w) \ |\ w \in \genset^{*}, w \text{ represents } g\}
\]

This induces a distance function on \(G\) called the \em word  metric \em 
by setting
\[
\dist(g_{1},g_{2}) = |g_{1}^{-1} g_{2}|
\]
(It is well known, and easy to see, that \(d\) is indeed a metric turning \(G\) into a geodesic space --- 
that is, for any \(g_{1},g_{2} \in G\) there exist a geodesic 
\(\gamma:[0..\dist(g_{1},g_{2})] \to G\) with \(\gamma(0) = g_{1}\) and 
\(\gamma(\dist(g_{1},g_{2})) = g_{2}\).)
We denote balls as $B(n,g):=\{h\in G\ \mid \ \dist(h,g)\leq n\}$.

Multiplication defines an action of the group on itself on the left by isometries:
\[
\dist(g g_{1},g g_{2}) 
= |(g g_{1})^{-1} g g_{2}|
= |g_{1}^{-1} g_{2}|
= \dist(g_{1},g_{2}) 
\]

We say that \(A \subset G\) is \em connected \em if there exists a path connecting
any \(g_{1},g_{2} \in A\), by which we mean that there exists \(\gamma:[a..b] \to A\) 
(for some \(a,b \in \Nn\)) so that for any \(t \in [a..b-1]\) we have that
\(\dist(\gamma(t),\gamma(t+1)) = 1\).

\begin{lem}[Discrete Arzela-Ascoli]
\label{lemma:arzela}
Let $(\gamma_n:\infint\To G)_{n\in\Nn}$ be a sequence of paths in $G$.  
If for each $t\in\infint$,  $(\gamma_n(t))_{n\in\Nn}$ is finite, then  $(\gamma_{n})$ 
subconverges pointwise to some $\gamma:\infint\To G$.

If the $\gamma_n$ are geodesics, then so is $\gamma$. In particular, any sequence of geodesic rays based at the same point subconverges to a geodesic ray.
\end{lem}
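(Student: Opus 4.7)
The plan is to proceed by a standard diagonal (Cantor-style) argument, then transfer the geodesic property across the limit coordinate by coordinate. First I would apply the finite-range hypothesis to obtain, by pigeonhole, an infinite set $N_0 \subseteq \Nn$ on which $\gamma_n(0)$ is constant. Assuming inductively that I have nested infinite sets $N_0 \supseteq N_1 \supseteq \cdots \supseteq N_k$ such that $\gamma_n(t)$ is constant for $n \in N_t$ and $t \leq k$, I pigeonhole again on the finite value set $\{\gamma_n(k+1) : n \in N_k\}$ to extract an infinite $N_{k+1} \subseteq N_k$ on which $\gamma_n(k+1)$ is also constant. Choosing $n_k$ to be the $k$-th element of $N_k$ in increasing order produces a strictly increasing sequence $(n_k)$ such that for each fixed $t$, the tail $(\gamma_{n_k}(t))_{k \geq t}$ is constant. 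Define $\gamma(t)$ to be this eventually attained value; then $\gamma_{n_k} \to \gamma$ pointwise.

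For the geodesic claim, suppose each $\gamma_n$ is a geodesic. Fix $s,t \in \infint$. For all $k$ sufficiently large (specifically $k \geq \max\{s,t\}$), we have $\gamma_{n_k}(s) = \gamma(s)$ and $\gamma_{n_k}(t) = \gamma(t)$, hence
\[
\dist(\gamma(s),\gamma(t)) = \dist(\gamma_{n_k}(s), \gamma_{n_k}(t)) = |s-t|,
\]
so $\gamma$ is itself a geodesic ray.

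For the final assertion, suppose $(\gamma_n)$ are geodesic rays with common basepoint $g_0 \in G$. Then for every $t \in \infint$ and every $n$,
\[
\gamma_n(t) \in B(t, g_0),
\]
and since $G$ is finitely generated with respect to the finite set $\genset$, the ball $B(t,g_0)$ is finite. Thus the finiteness hypothesis of the first part is satisfied, and the construction above yields a geodesic ray $\gamma$ as a pointwise subsequential limit.

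The only real content is bookkeeping in the diagonal extraction; no step looks like a genuine obstacle, since finiteness of the value set at each coordinate does all the work and the word metric passes cleanly to pointwise limits when convergence is eventually constant. The one point to be careful about is to choose the diagonal $n_k$ so that it is strictly increasing (so we have a genuine subsequence), which is why I pick $n_k$ as the $k$-th element of $N_k$ rather than, say, its minimum.
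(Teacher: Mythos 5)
Your proof is correct and takes essentially the same route as the paper: the paper dismisses the pointwise subconvergence as ``obvious'' (for which you supply the standard diagonal pigeonhole argument), then transfers the geodesic property coordinatewise, and finally observes that rays from a fixed basepoint land in finite balls. Your write-up is simply more explicit about the diagonal extraction and the care needed to make $(n_k)$ strictly increasing.
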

\begin{proof}
The first part is obvious.

If the $\gamma_n$ are geodesics, then for any interval $[a..b]\subset \Zz$ there is some $\gamma_{n_j}$ such that $\gamma|_{[a..b]}$ agrees with $\gamma_{n_j}|_{[a..b]}$. Consequently, $b-a=\dist(\gamma(a),\gamma(b))$. It follows that $\gamma$ is a geodesic.

Finally, if $\gamma_n$ is a sequence of geodesic rays with $\gamma_n(0)=\gamma_1(0)$ for all $n\in\Nn$, then for any $t\in\infint$, $\gamma_n(t)$ is an element of the $t$-ball around $\gamma_1(0)$, and hence can assume only finitely many values. It follows that $\gamma_n$ subconverges pointwise (to a geodesic ray).
\end{proof}

\subsection{Subshifts of finite type}\label{subsectionSFTs}

We give several standard definitions:

\begin{definition}Let $G$ be a  group, and $A$ some finite set equipped with the 
discrete topology. The \em full shift \em on $G$ is 
$A^G:=\{\omega:G\To A\}$ with the product topology and the right $G$-action given by 
$(\omega\cdot g)(h):=\omega(gh)$. By Tychonoff, $A^G$ is compact. 

A {\em cylinder set} in $\Aa^G$ is a set of the form $ \prod_{g\in G} U_g$, with each $U_g\subseteq \Aa$ and for all but finitely many $g\in G$, $U_g=\Aa$.  
A {\em clopen} set is the finite union of cylinder sets.

A subset $\sft$ of $A^G$ is said to be a {\em subshift} if it is closed and invariant
under the right $G$ action.  A subshift $\sft$ is called a {\em subshift of finite type} 
(an {\em SFT}) if there exists clopen $Z_1,\ldots,Z_n$ such that $\sft=\bigcap_{g\in G;i=1,\ldots,n}Z_i\cdot g$.
We think of the $Z_i$ that define $\sft$ as giving us ``local rules'' which determine membership in $\sft$: to determine whether $\omega\in A^G$ is a configuration of $\sft$, we must see whether $\omega\cdot g$ is in $Z_i$ for all $g\in G$ and $i=1,\ldots,n$. In other words, we must check that $\omega$ takes on a prescribed form near every point in $G$.

We say that an SFT $\sft$ is {\em strongly aperiodic} if it is nonempty and for any configuration $\omega \in \sft$ we have that $\text{Stab}_G\omega=\{1_G\}$, where $\text{Stab}_G\omega$ is the stabilizer of $\omega$. 
\end{definition}

We note that our definition of strong aperiodicity is strict; some authors allow configurations in 
$\sft$ to have torsion stabilizers.
In the next proposition we observe that any group with only finitely many conjugacy
classes of torsion elements admits an SFT with no torsion stabilizers
(infinite order stabilizers may exist).  
It is well known that hyperbolic groups satisfy this condition
(see, for example,~\cite[Theorem III.$\Gamma$.3.2]{BridsonHaefliger99}).

That finite groups admit   strongly aperiodic subshifts of finite type is  trivial, but we include this within the following proposition for  efficiency.

\begin{prop} 
\label{prop:NoTorsion}
Any finitely generated group with finitely many conjugacy classes of torsion elements
admits a non-empty SFT $\sft$ such that for all $\omega\in \sft$,  
$\text{Stab}_G\omega$ has no torsion elements.

It follows that  any finite group admits a strongly aperiodic SFT. 
\end{prop}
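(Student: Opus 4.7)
The plan is to enforce, through finitely many local constraints, that each representative of the finitely many torsion conjugacy classes fails to stabilize any configuration.  Once this is done for representatives, $G$-invariance of the SFT together with the conjugation identity $\text{Stab}_G(\omega\cdot h)=h^{-1}\text{Stab}_G(\omega)h$ will propagate non-fixation to every conjugate, hence to every non-identity torsion element of $G$.

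Concretely, let $t_1,\ldots,t_k$ be representatives of the non-identity torsion conjugacy classes, and let $n_i\ge 2$ denote the order of $t_i$.  I would take the alphabet $\Aa:=\{0,1,2\}^k$, writing $a=(a_1,\ldots,a_k)$ for elements of $\Aa$, and for each $i$ set $Z_i:=\{\omega\in\Aa^G:\omega(1_G)_i\neq\omega(t_i)_i\}$, which is clopen since it is specified by the values of $\omega$ at two coordinates.  Then $\sft:=\bigcap_{g\in G,\ 1\le i\le k}Z_i\cdot g$ is an SFT, and the $i$-th local rule reads $\omega(g)_i\neq\omega(t_ig)_i$ for every $g\in G$.

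To see $\sft\neq\emptyset$, fix $i$.  Left multiplication by $\langle t_i\rangle$ partitions $G$ into orbits of size $n_i$, each of which, under the relation $g\sim t_ig$, forms an $n_i$-cycle (a double edge when $n_i=2$).  Every such cycle admits a proper $3$-coloring, so there exists a function $f_i:G\to\{0,1,2\}$ with $f_i(g)\neq f_i(t_ig)$ for all $g\in G$; choosing the $f_i$ independently and setting $\omega(g):=(f_1(g),\ldots,f_k(g))$ produces a configuration in $\sft$.

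Finally, suppose for contradiction that some non-identity torsion $g\in\text{Stab}_G\omega$ for some $\omega\in\sft$, and write $g=ht_ih^{-1}$.  Then $t_i=h^{-1}gh\in h^{-1}\text{Stab}_G(\omega)h=\text{Stab}_G(\omega\cdot h)$.  Since $\sft$ is $G$-invariant we have $\omega\cdot h\in\sft$, so the $i$-th local rule at $1_G$ gives $(\omega\cdot h)(1_G)_i\neq(\omega\cdot h)(t_i)_i$; but $t_i$ stabilizing $\omega\cdot h$ forces the opposite equality, a contradiction.  For the second assertion, every non-identity element of a finite group is torsion, so a trivial torsion stabilizer is a trivial stabilizer and $\sft$ is strongly aperiodic.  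The only thing demanding care is the conjugation identity for stabilizers under the right action and the reduction to conjugacy-class representatives; once those are pinned down, both nonemptiness (via $3$-coloring) and aperiodicity are immediate.
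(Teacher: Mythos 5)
Your proof is correct in substance but takes a genuinely different route from the paper's. The paper chooses one monolithic local rule: with $N = \max_i |g_i|$, it uses alphabet $B(N,1_G)$ and requires that $\omega(g)\neq\omega(g')$ whenever $\dist(g,g')\le N$; nonemptiness comes from a greedy coloring argument (each new vertex sees at most $\#B(N,1_G)-1$ colored neighbors), and aperiodicity comes from the elementary estimate $\dist(c,hc)=|g_i|\le N$ for $h=cg_ic^{-1}$. You instead use a product alphabet $\{0,1,2\}^k$ with one coordinate per torsion conjugacy class, a separate constraint per class, and proper $3$-coloring of the $\langle t_i\rangle$-orbit cycles for nonemptiness; you then propagate to all conjugates via the stabilizer identity $\mathrm{Stab}_G(\omega\cdot h)=h^{-1}\mathrm{Stab}_G(\omega)h$. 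Your version has a smaller alphabet and a cleaner nonemptiness argument, while the paper's is a one-line rule whose aperiodicity check is purely metric.

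One small orientation slip worth fixing: with the paper's convention $(\omega\cdot g)(x)=\omega(gx)$, your $Z_i=\{\omega:\omega(1_G)_i\neq\omega(t_i)_i\}$ produces the constraint $\omega(g)_i\neq\omega(gt_i)_i$ (right translates), not $\omega(g)_i\neq\omega(t_ig)_i$ as you wrote; indeed, the set $\{\omega:\omega(g)_i\neq\omega(t_ig)_i \ \forall g\}$ is not shift-invariant in general. Correspondingly, the $3$-coloring should be done on the right cosets $g\langle t_i\rangle$. Happily, your contradiction at $g=1_G$ reads the same either way, and cosets on either side have size $n_i$, so after this swap everything goes through verbatim.
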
 

\begin{proof}

Let $g_1,\ldots, g_n$ be representatives of the conjugacy classes of the torsion elements in $G$.  
Let  $N := \max g_i$ and  $B:=B(N,1_G)$. We define our SFT $\sft\subset B^G$  to be such that 
for any $\omega \in \sft$ and any $g,g' \in G$, if $\dist(g,g') \leq N$ then
$\omega(g) \neq \omega(g')$.
 
By induction on the elements of $G$, $\Omega$ is non-empty: suppose we have assigned elements of $B$ 
to some subset $H$ of $G$. Let $g\in G\setminus H$. This $g$ is within $N$ of at most $\#B-1$ elements 
of $H$, and so can be assigned some element of $B$ distinct from any of those assigned to elements of 
$H$.  This process defines an element $\omega \in \Omega$, showing that $\Omega$ is not empty.
 
Let $h$ be a torsion element of $G$, with $h=cg_i c^{-1}$ for some $c$ and {representative torsion element} $g_i$. Then $\dist(c,hc)=
\dist(c,cg_ic^{-1}c)=\dist(c,cg_i)=\dist(1_G,g_i)=|g_i|\leq N$.

Thus, for $\omega\in \sft$, $\omega(c)\neq \omega(hc)$ and so $\omega\cdot h \neq \omega$. The proposition follows. \end{proof}

Our main result would give an SFT in which no configuration in stabilized by an element of 
infinite order.  The next proposition shows that we can combine it with an SFT
as constructed above to obtain a strongly aperiodic SFT:

\begin{prop} 
\label{prop:CombiningSFTs}
If group $G$ admits a non-empty SFT $\sft_1$ such that for all $\omega_1\in \sft_1$,  
$\text{Stab}_G\omega_1$ has no torsion elements, and 
$G$ admits a non-empty SFT $\sft_2$ such that
for all 
$\omega_2\in \sft_2$, $\text{Stab}_G\omega_2$ has no infinite order elements, 
then $G$ admits a strongly aperiodic SFT.
\end{prop}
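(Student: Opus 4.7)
The plan is to take the direct product $\sft := \sft_1 \times \sft_2$, realized as a subshift on the alphabet $\Aa_1 \times \Aa_2$ via the natural identification $(\Aa_1 \times \Aa_2)^G \cong \Aa_1^G \times \Aa_2^G$, where the $G$-action is the diagonal shift. I would first verify that $\sft$ is indeed an SFT on $G$: if $\sft_i = \bigcap_{g,j} Z_{i,j} \cdot g$ for finite collections of clopen sets $Z_{i,j} \subseteq \Aa_i^G$, then lifting each $Z_{i,j}$ to a clopen subset $\tilde Z_{i,j} \subseteq (\Aa_1 \times \Aa_2)^G$ via the coordinate projection $\pi_i$ (which is continuous and $G$-equivariant) exhibits $\sft$ as the intersection of finitely many $G$-translates of clopen sets. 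Non-emptiness is immediate: given $\omega_i \in \sft_i$, the pair $(\omega_1,\omega_2)$ lies in $\sft$.

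The heart of the argument is the stabilizer computation. For any $\omega = (\omega_1,\omega_2) \in \sft$ one has $\text{Stab}_G\omega = \text{Stab}_G\omega_1 \cap \text{Stab}_G\omega_2$, since the action is diagonal. By the two hypotheses, the left factor contains no torsion elements and the right factor contains no infinite order elements. Any non-identity element of $G$ falls into exactly one of these two classes, so the intersection contains no non-identity element, i.e.\ $\text{Stab}_G\omega = \{1_G\}$. This gives strong aperiodicity.

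There is no genuine obstacle here; the only point requiring care is the bookkeeping that the product of two SFTs (in the sense given above) is again an SFT in the sense of this paper's definition, which is routine once one observes that preimages of clopen sets under the projection maps are clopen and that the $G$-action on the product is the diagonal one.
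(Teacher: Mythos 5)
Your proposal is correct and follows essentially the same route as the paper: form the product $\sft_1 \times \sft_2$ with the diagonal $G$-action, note it is a non-empty SFT, and observe that the stabilizer of any configuration is the intersection of the two stabilizers, which must therefore be trivial. You spell out the (routine) bookkeeping that the product is an SFT more explicitly than the paper does, but the argument is the same.
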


\begin{proof}
Consider $\sft = \sft_1 \times \sft_2$ with the diagonal $G$-action.  
Suppose $\omega =(\omega_1,\omega_2) \in \sft$ is invariant under $g \in G$.
Then both $\omega_1$ and $\omega_2$ are invariant under $g \in G$, 
showing that $g$ is neither torsion nor has infinite order, hence $g$ is trivial.
\end{proof}

\subsection{Hyperbolic groups}\label{subsectionHyperbolicGroups}

Let $G$ be a group generated by a finite set $\genset$. We define consider $G$ with the word metric with respect to $\genset$.

\vspace{\baselineskip}
\centerline{\includegraphics[scale=.25]{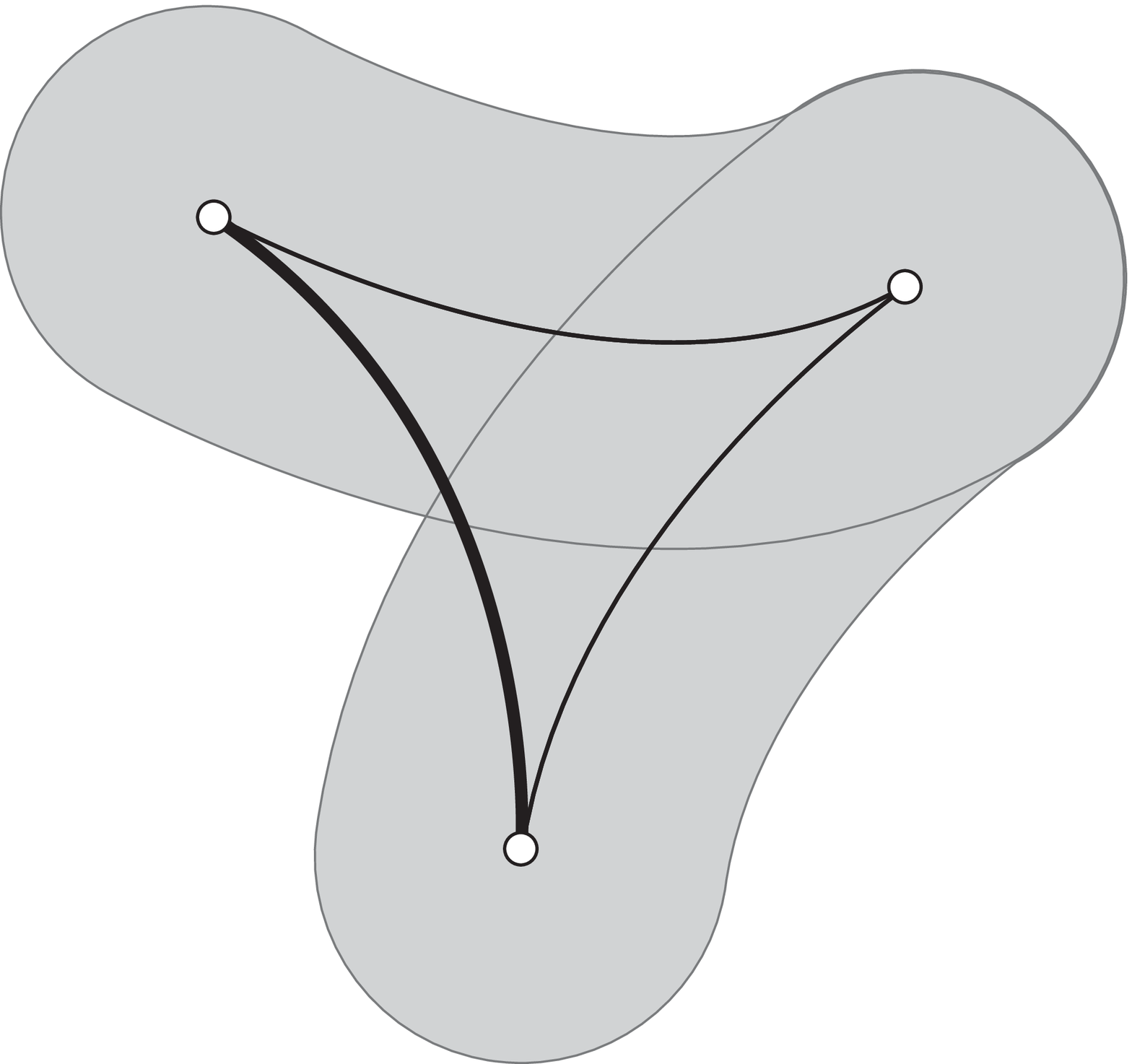}}

\begin{definition}
Let $\delta>0$. {As suggested in the above diagram}, a geodesic triangle in 
$G$
is said to be $\delta$-slim if every side is contained in the $\delta$-neighborhood of the other two side. We say that 
$G$ 
is $\delta$-hyperbolic if every geodesic triangle in 
$G$  
is $\delta$-slim. If for some $\delta$, 
$G$  
is $\delta$-hyperbolic, we call $G$ a hyperbolic group.
\end{definition}

\pparagraph{Properties of hyperbolic groups.} A hyperbolic group is finitely 
presented~\cite[Corollary 3.26]{BridsonHaefliger99} and has, at most, finitely 
many conjugacy classes of finite order elements~\cite[Theorem~3.2]{BridsonHaefliger99}.

\begin{definition}
We say that $G$ is one ended if for all compact 
$K\subset H$, 
$G\setminus K$
contains exactly one unbounded connected component.
\end{definition}

\pparagraph{Assumption.} Henceforth, we assume that $G$ is a one-ended hyperbolic group equipped with a fixed finite generating set $\genset$.

The following lemma introduces a technique which will be used frequently.
\begin{lem}[Repairing a ladder]
\label{lemma:ladder}
Let $\interval,\interval'$ be intervals containing $0$ and let $\gamma:\interval\To G$ and $\gamma':\interval'\To G$ be geodesics with $\gamma(0)=\gamma'(0)$. If $\dist(\gamma(t),\gamma'(t'))\leq k$ for some $t\in \interval,t'\in \interval'$, then $\dist(\gamma(t),\gamma'(t))\leq 2k$.
\end{lem}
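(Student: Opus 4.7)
The plan is to derive this purely from the triangle inequality; no hyperbolicity is needed, which explains the name ``repairing a ladder'' (a local proximity between the two geodesics is propagated to the same-parameter points). The key observation is that a geodesic is an isometric embedding from an interval, so distances along $\gamma$ and $\gamma'$ are literally differences of parameters. This lets me compare $|t|$ and $|t'|$ via $d(\gamma(t),\gamma'(t'))$.

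Here is the sketch. Without loss of generality I assume $t,t' \geq 0$ (otherwise I reparametrize the relevant geodesic by $s \mapsto \gamma(-s)$; since the intervals contain $0$ and the hypothesis $d(\gamma(t),\gamma'(t')) \leq k$ is preserved, the argument is symmetric). Because $\gamma$ and $\gamma'$ are geodesics with $\gamma(0) = \gamma'(0)$, I have
\[
d(\gamma(0),\gamma(t)) = t, \qquad d(\gamma'(0),\gamma'(t')) = t'.
\]
Applying the reverse triangle inequality to the triangle with vertices $\gamma(0) = \gamma'(0),\ \gamma(t),\ \gamma'(t')$ gives
\[
|t - t'| = \bigl|\,d(\gamma(0),\gamma(t)) - d(\gamma'(0),\gamma'(t'))\,\bigr| \leq d(\gamma(t),\gamma'(t')) \leq k.
\]

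Now I apply the triangle inequality once more, moving along $\gamma'$ from parameter $t'$ to parameter $t$ (which costs $|t - t'|$ since $\gamma'$ is a geodesic, assuming $t \in \interval'$ so that $\gamma'(t)$ is defined, which is implicit in the conclusion):
\[
d(\gamma(t),\gamma'(t)) \leq d(\gamma(t),\gamma'(t')) + d(\gamma'(t'),\gamma'(t)) \leq k + |t - t'| \leq 2k.
\]
This finishes the proof. There is no real obstacle; the only mild care is handling signs of $t,t'$ and noting that the statement implicitly requires $t \in \interval'$ so that the left-hand side $d(\gamma(t),\gamma'(t))$ makes sense.
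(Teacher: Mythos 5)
Your proof is correct and follows exactly the same route as the paper's: use the triangle inequality through $\gamma(0)=\gamma'(0)$ to get $|t-t'|\le k$, then a second triangle inequality through $\gamma'(t')$ to conclude. One small caveat: the ``WLOG $t,t'\ge 0$'' dismissal is not actually harmless. If $t$ and $t'$ have opposite signs the conclusion can fail outright (e.g.\ $G=\Zz$, $\gamma(n)=n$, $\gamma'(n)=-n$, $t=5$, $t'=-5$ gives $k=0$ but $\dist(\gamma(5),\gamma'(5))=10$), and reparametrizing $\gamma'$ by $s\mapsto -s$ changes what ``$\gamma'(t)$'' means, so it does not reduce that case to the nonnegative one. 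This is a latent defect in the lemma statement itself --- the paper's own proof silently writes $t'=\dist(\gamma'(0),\gamma'(t'))$, which is the same sign assumption --- and it is harmless because every invocation of the lemma in the paper has $t,t'\ge 0$. But you should not claim symmetry that isn't there; just state the hypothesis $t,t'\ge 0$ (or $|t-t'|=\big||t|-|t'|\big|$) explicitly, as the paper effectively does.
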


\psfrag{h}[l][l]{$\gamma'(t')$}
\psfrag{i}[r][r]{$\gamma(t)$}
\psfrag{k}{$k$}
\psfrag{t}[l][l]{$\gamma'(t)$}
\psfrag{j}[c][c]{$\gamma(0)=\gamma'(0)$}
\vspace{\baselineskip}
\centerline{\includegraphics[scale=.25]{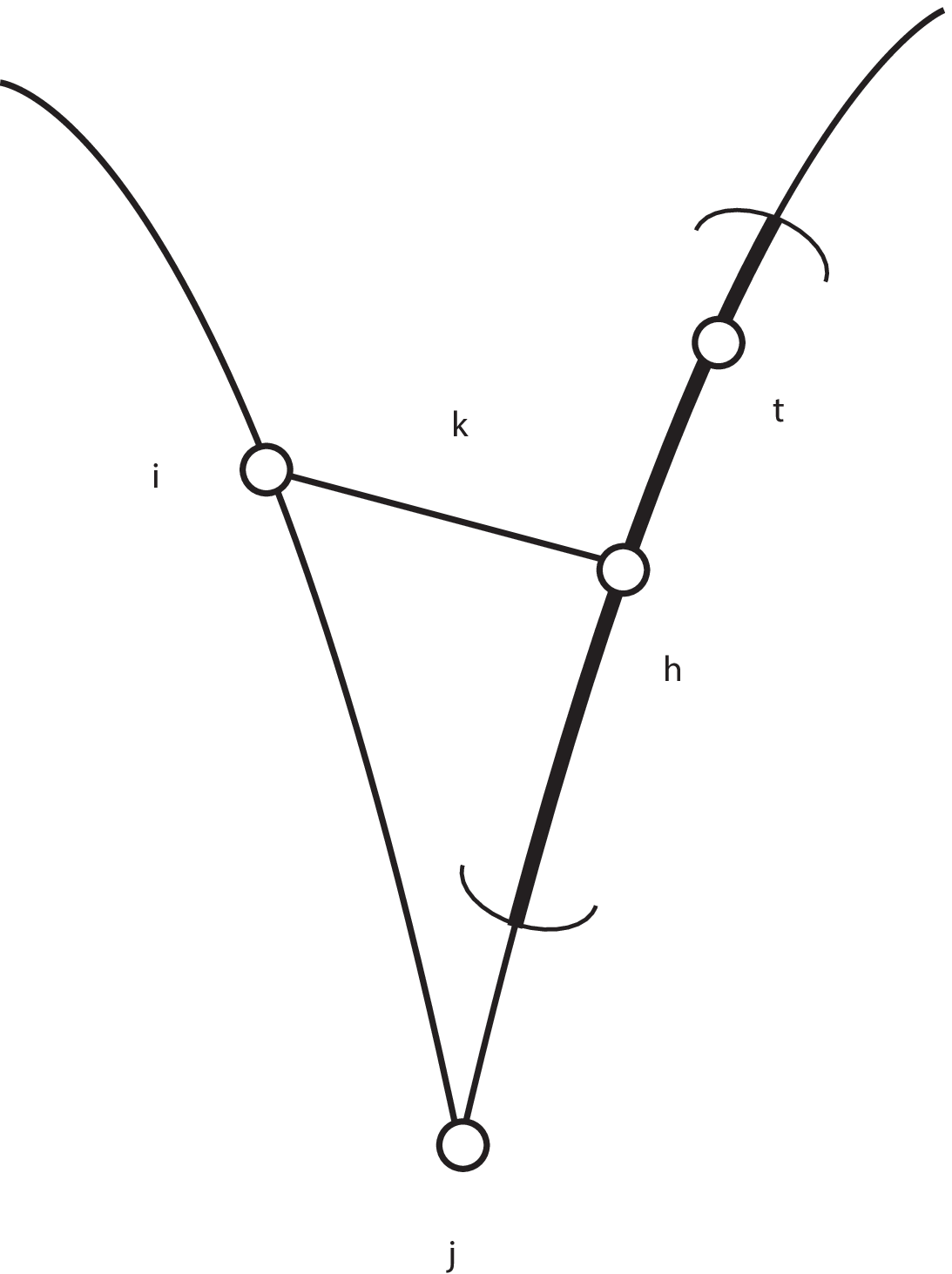}}

\begin{proof}
Because $\gamma(0)=\gamma'(0)$ we have
$$t'=\dist(\gamma'(0),\gamma'(t'))
\leq \dist(\gamma(0),\gamma(t))+\dist(\gamma(t),\gamma(t'))
\leq t+k,$$
and by symmetry, $t\leq t'+k$, so that
$$\dist(\gamma'(t'),\gamma'(t))=|t-t'|\leq k.$$
It follows that
$$\dist(\gamma(t),\gamma'(t))
\leq\dist(\gamma(t),\gamma'(t'))+\dist(\gamma'(t),\gamma'(t'))\leq 2k.$$
\end{proof}

 The next lemma gives some bounds on how long two geodesics from the same point will fellow travel.
\begin{lem}
\label{lemma:thintriangle}
Let $\interval$ and be an interval containing $0$, and let $\gamma:\interval\to G$ and $\gamma':\interval\To G$ be geodesics with $\gamma(0)=\gamma'(0)$. Suppose $t, T\in \interval$ are such that $t < T-\dist(\gamma(T),\gamma'(T))-2\delta,$ then $\dist(\gamma(t),\gamma'(t))\leq 2\delta.$
\end{lem}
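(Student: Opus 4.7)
The strategy is a standard thin-triangle argument: apply the $\delta$-slim hypothesis to the geodesic triangle on the vertices $\gamma(0), \gamma(T), \gamma'(T)$, rule out that $\gamma(t)$ is close to the ``far'' side, and then invoke Lemma~\ref{lemma:ladder} to upgrade the resulting bound to one between points at the same parameter value.

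Set $k := \dist(\gamma(T), \gamma'(T))$ and pick any geodesic $\alpha$ from $\gamma(T)$ to $\gamma'(T)$; it has length $k$. Together with $\gamma|_{[0..T]}$ and $\gamma'|_{[0..T]}$, this forms a geodesic triangle. By $\delta$-hyperbolicity, the point $\gamma(t)$ lies within distance $\delta$ of some point $p$ on $\gamma'|_{[0..T]} \cup \alpha$.

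The crux of the argument, and the only place where the precise hypothesis $t < T - k - 2\delta$ is used, is to exclude the possibility that $p$ lies on $\alpha$. If it did, the triangle inequality would give $\dist(\gamma(t), \gamma(T)) \leq \delta + k$; but $\gamma$ is a geodesic, so $\dist(\gamma(t), \gamma(T)) = T - t$, which forces $t \geq T - k - \delta$, contradicting the hypothesis. Hence $p = \gamma'(t')$ for some $t' \in \interval$, and $\dist(\gamma(t), \gamma'(t')) \leq \delta$.

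Finally, applying Lemma~\ref{lemma:ladder} (with its $k$ taken to be $\delta$) to the geodesics $\gamma$ and $\gamma'$ sharing basepoint $\gamma(0) = \gamma'(0)$ yields $\dist(\gamma(t), \gamma'(t)) \leq 2\delta$, as required. No step is a serious obstacle; the only subtlety is bookkeeping the $2\delta$ buffer in the hypothesis, which is exactly what is needed to rule out the ``far side'' case.
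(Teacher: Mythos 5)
Your proof is correct and follows essentially the same approach as the paper: use $\delta$-slimness to place $\gamma(t)$ near $\gamma' \cup \alpha$, rule out the side $\alpha$ via the triangle inequality and the hypothesis on $t$, and then invoke Lemma~\ref{lemma:ladder} to replace the nearby parameter $t'$ with $t$.
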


\vspace{\baselineskip}
\psfrag{a}[r][r]{$\gamma(T)$}
\psfrag{c}[l][l]{$\gamma'(T)$}
\psfrag{d}[l][l]{$\gamma'(t)$}
\psfrag{e}[l][l]{$\gamma'(t')$}
\psfrag{f}[l][l]{$\gamma(0)=\gamma'(0)$}
\psfrag{g}[r][r]{$\gamma(t)$}
\psfrag{b}{$\tilga$}
\centerline{\includegraphics[scale=.25]{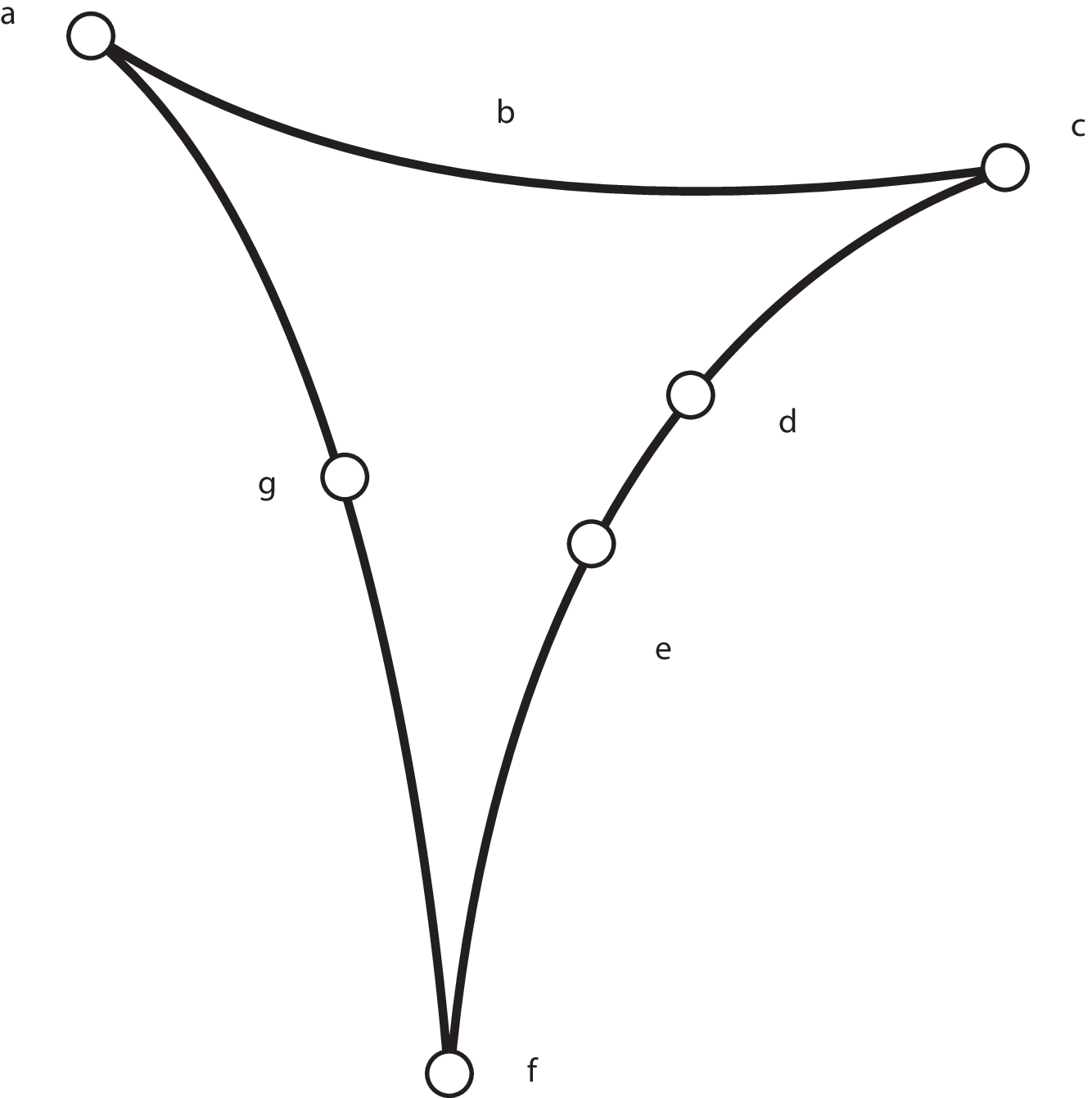}}

\begin{proof}
By slim triangles, $\gamma(t)$ is within $\delta$ of either $\gamma'$ or the geodesic $\tilga$ connecting $\gamma(T)$ to $\gamma'(T)$. In the latter case we have some $t'$ such that $\dist(\tilga(t'),\gamma(t))\leq \delta$, and thus
$$\dist(\gamma(t),\gamma(T))\leq \dist(\gamma(t),\tilga(t'))+\dist(\tilga(t'),\gamma(T))\leq \delta + \dist(\gamma(T),\gamma'(T)),$$
contradicting $\dist(\gamma(t),\gamma(T))=|T-t|>\dist(\gamma(T),\gamma'(T))+2\delta.$

Hence $\gamma(t)$ is within $\delta$ of $\gamma'$, so that there is some $t'$ such that $\dist(\gamma(t),\gamma'(t'))\leq \delta$, and we may apply Lemma \ref{lemma:ladder} to see that $\dist(\gamma(t),\gamma'(t))\leq 2\delta$.
\end{proof}

\pparagraph{Slim quads.} Consider a geodesic quad, i.e., a union of geodesic segments of the form $\overline{AB},\overline{BC},\overline{CD},\overline{DA}$. Since any diagonal of the quad is in the $\delta$-neighborhood of each pair of sides it cuts off, it is clear that each side of the quad is within a $2\delta$-neighborhood of the union of the other three. We will now see how this implies bounds on the distance between corresponding points on two geodesic segments of equal length.

\begin{lem}
\label{lemma:thinquads}
Let  $\gamma, \gamma':[0..T]\To G$ be geodesics and let 
$$k_0 = \dist(\gamma(0),\gamma'(0)),\quad\quad k_T=\dist(\gamma(T),\gamma'(T)),\quad\quad k=\max\{k_0,k_T\}.$$
For $0\leq t\leq T$, we have
$$\dist(\gamma(t),\gamma'(t))\leq 3k+4\delta.$$
If $k_0+2\delta < t <T-k_T-2\delta$, then $\dist(\gamma(t),\gamma'(t))\leq \min\{k_0,k_T\}+4\delta$.
\end{lem}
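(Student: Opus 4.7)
The plan is to set up a geodesic quadrilateral with vertices $\gamma(0),\gamma(T),\gamma'(T),\gamma'(0)$, whose four sides are $\gamma$, $\gamma'$, a geodesic $\alpha:[0..k_0]\to G$ from $\gamma(0)$ to $\gamma'(0)$, and a geodesic $\beta:[0..k_T]\to G$ from $\gamma(T)$ to $\gamma'(T)$. As the excerpt already notes, subdividing this quad by a diagonal and applying $\delta$-slimness to the two resulting triangles shows that any point of $\gamma$ lies within $2\delta$ of the union of the other three sides. I would fix $t\in[0..T]$ and split on which of those three sides witnesses the $2\delta$ bound for $\gamma(t)$.

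\emph{Case A: $\gamma(t)$ is $2\delta$-close to $\alpha(s)$ for some $s\in[0..k_0]$.} Then $t = d(\gamma(0),\gamma(t)) \le s + 2\delta \le k_0 + 2\delta$, so the triangle inequality along $\gamma(t) \to \gamma(0) \to \gamma'(0) \to \gamma'(t)$ gives
\[
d(\gamma(t),\gamma'(t)) \le t + k_0 + t \le 3k_0 + 4\delta \le 3k + 4\delta.
\]
\emph{Case B: $\gamma(t)$ is $2\delta$-close to $\beta$.} Symmetrically $T - t \le k_T + 2\delta$ and the same estimate yields $d(\gamma(t),\gamma'(t)) \le 3k_T + 4\delta \le 3k + 4\delta$. \emph{Case C: $d(\gamma(t),\gamma'(t')) \le 2\delta$ for some $t'\in[0..T]$.} I would then run the ``repairing a ladder'' computation from both endpoints: from $\gamma(0),\gamma'(0)$ one gets $|t-t'| \le k_0 + 2\delta$, and from $\gamma(T),\gamma'(T)$ one gets $|t-t'| \le k_T + 2\delta$, so $|t-t'| \le \min\{k_0,k_T\} + 2\delta$. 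Hence
\[
d(\gamma(t),\gamma'(t)) \le d(\gamma(t),\gamma'(t')) + d(\gamma'(t'),\gamma'(t)) \le 2\delta + \min\{k_0,k_T\} + 2\delta,
\]
which is $\le k + 4\delta \le 3k + 4\delta$.

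Combining the three cases gives the universal bound $3k + 4\delta$, which is the first claim. For the second claim, the hypothesis $k_0 + 2\delta < t < T - k_T - 2\delta$ is exactly what rules out Cases A and B, forcing Case C, whose conclusion is the sharper bound $\min\{k_0,k_T\} + 4\delta$. The only real step to be careful with is the two-sided parameter matching in Case C: one must run the ``ladder'' argument from both ends of the quad to recover the minimum of $k_0$ and $k_T$ rather than just one of them. Everything else is triangle inequality bookkeeping.
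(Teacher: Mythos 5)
Your proof is correct and follows essentially the same route as the paper: set up the geodesic quad, observe $2\delta$-slimness against the union of the other three sides, and split into the three cases according to which side witnesses it, with the hypothesis $k_0+2\delta<t<T-k_T-2\delta$ ruling out the two ``short'' cases. The only cosmetic difference is in Case C, where you derive $|t-t'|\leq k_0+2\delta$ and $|t-t'|\leq k_T+2\delta$ by running the ladder estimate from both ends of the quad, while the paper derives only $|t-t'|\leq k_T+2\delta$ directly and then obtains the $k_0$ bound by reversing the orientations of $\gamma$ and $\gamma'$; the computations are identical up to this bookkeeping.
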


\psfrag{a}[r][r]{$\gamma(T)$}
\psfrag{b}[l][l]{$\gap(T)$}
\psfrag{c}[l][l]{$\gap(T-k_T-2\delta)$}
\psfrag{d}[l][l]{$\gap(k_0+2\delta)$}
\psfrag{e}[l][l]{$\gap(0)$}
\psfrag{f}[r][r]{$\gamma(0)$}
\psfrag{g}[r][r]{$\gamma(k_0+2\delta)$}
\psfrag{h}[r][r]{$\gamma(T-k_T-2\delta)$}

\psfrag{i}[r][r]{$\tilga$}
\psfrag{j}[l][l]{$\tilga'$}

\vspace{\baselineskip}
\centerline{\includegraphics[scale=.4]{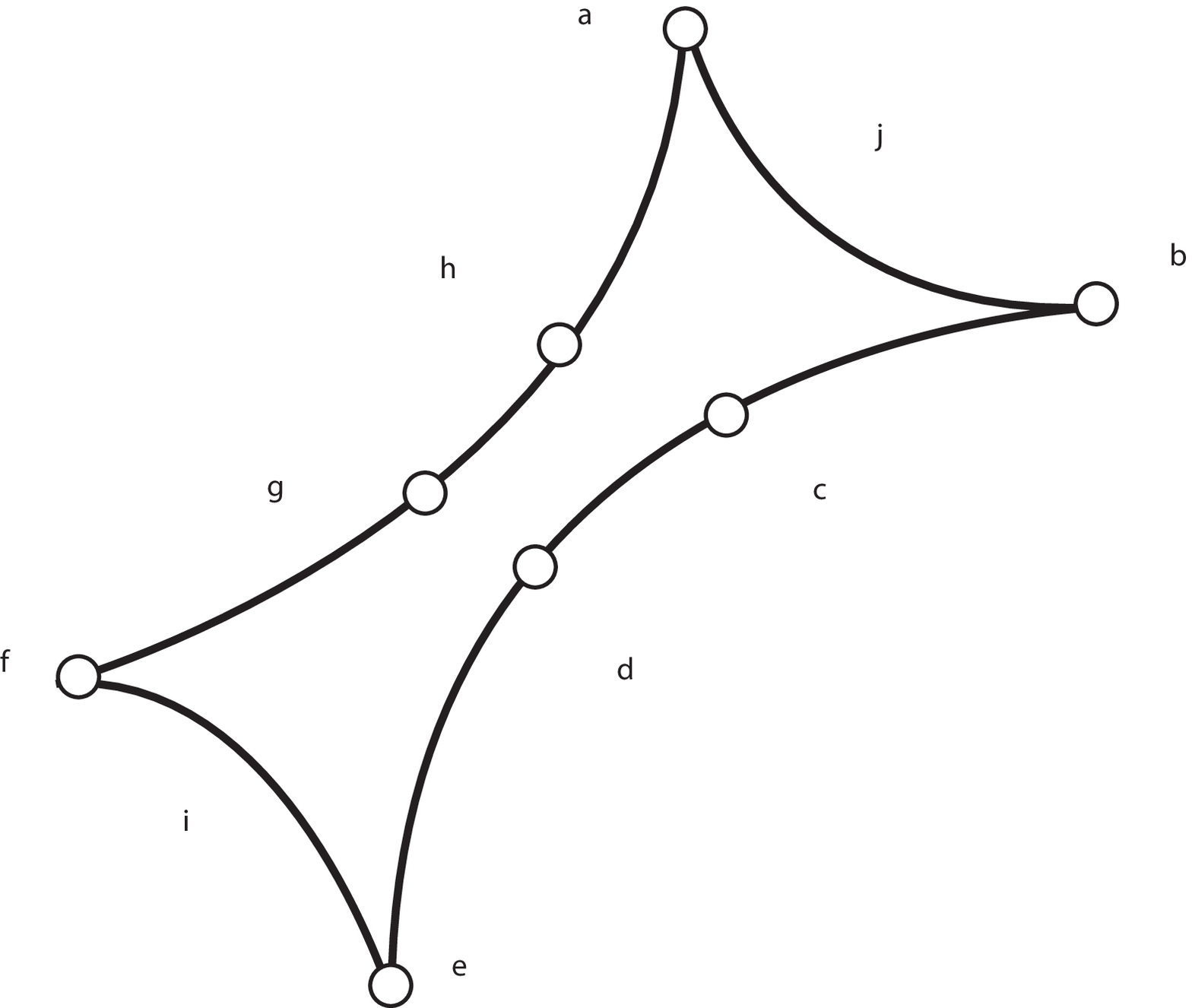}}

\begin{proof}
Let $\tilga:\tilde{\interval}\To G$ be a geodesic connecting $\gamma(0)$ to $\gap(0)$ and $\tilga':\tilde{\interval}'\To G$ a geodesic connecting $\gamma(T)$ to $\gap(T)$. Each side of the geodesic quad spanned by $\gamma,\tilga',\gap,\tilga$ is within the $2\delta$ neighborhood of the other three. In particular, $\gamma(t)$ must be within $2\delta$ of a point of $\tilga$,$\gap$ or $\tilga'$.

Suppose first that there is some $t'\in\tilde{\interval}$ such that $\dist(\gamma(t),\tilga(t'))\leq 2\delta$. By the triangle inequality, $t=\dist(\gamma(0),\gamma(t))
\leq \dist(\gamma(0),\tilga(t'))+\dist(\tilga(t'),\gamma(t))
\leq k_0+2\delta.$ It follows that
$$\dist(\gamma(t),\gamma'(t))
\leq\dist(\gamma(t),\gamma(0))+\dist(\gamma(0),\gap(0))+\dist(\gap(0),\gamma(t))
= k_0+2\delta+k_0+k_0+2\delta\leq 3k_0+4\delta.$$

The case where $\gamma(t)$ is close to some $\tilga'(t')$ is similar, so we omit the proof.

Now suppose there is some $t'\in\interval$ such that $\dist(\gamma(t),\gap(t'))\leq 2\delta$ (note that we are always in this case if $k_0+2\delta<t<T-k_T-2\delta$.) We have
$$T=\dist(\gap(0),\gap(T))
\leq\dist(\gap(0),\gap(t'))+\dist(\gap(t'),\gamma(t))
+\dist(\gamma(t),\gamma(T))+\dist(\gamma(T),\gap(T))
$$
$$\leq t'+2\delta+T-t+k_T,
$$
so that $t'\geq t-k_T-2\delta$. An entirely symmetric computation shows that $t\geq t'-k_T-2\delta$, and hence
$$|t-t'|\leq k_T+2\delta,$$
so that
$$\dist(\gamma(t),\gap(t))
\leq\dist(\gamma(t),\gap(t'))+\dist(\gap(t'),\gap(t))\leq 2\delta+|t-t'|
\leq k_T+4\delta.
$$
Reversing $\gamma$ and $\gap$, we also get the bound $\dist(\gamma(t),\gap(t))\leq k_0+4\delta$. Hence
$\dist(\gamma(t),\gamma'(t))\leq \min\{k_0,k_T\}+4\delta$
as desired.
\end{proof}

\pparagraph{Asymptotic geodesics stay close.} We will now see that the previous lemmas provide some constraints on the behavior of two geodesic rays which do not diverge from each other.

\begin{definition}
Two geodesic rays $\gamma,\gamma':\infint\To G$ are said to be asymptotic if $\dist(\gamma(t),\gamma'(t))$ is bounded---manifestly, this is an equivalence relation. We will write $[\gamma]$ for the equivalence class of $\gamma$.
\end{definition}

\begin{lem}
\label{lemma:asymptotic}
Let $\gamma,\gamma':\infint\To G$ be asymptotic geodesic rays. 
For sufficiently large $p$, there exists $q$ such that $\dist(\gamma(p),\gamma'(q))\leq 2\delta$.
Moreover, for all $t\in\infint$,
$$\dist(\gamma(t),\gamma'(t))\leq 3\dist(\gamma(0),\gap(0))+4\delta.$$ 
Finally if $\gamma(0)=\gap(0)$, then $\dist(\gamma(t),\gap(t))\leq 2\delta$ for all $t\in\infint.$
\end{lem}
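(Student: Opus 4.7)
The plan is to handle the three parts separately. Parts (2) and (3) reduce quickly to the previous lemmas applied to long initial segments; the real content is part (1), which I would prove by chaining two applications of the slim triangle condition through an auxiliary geodesic.

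For part (1), write $k_0 = \dist(\gamma(0),\gamma'(0))$ and let $M := \sup_t \dist(\gamma(t),\gamma'(t))$, which is finite because $\gamma$ and $\gamma'$ are asymptotic. Given $p > k_0 + 2\delta$, pick $N > p + M + \delta$ and consider two geodesic triangles sharing an auxiliary side $\beta$, a geodesic from $\gamma(0)$ to $\gamma'(N)$. The first triangle has vertices $\gamma(0), \gamma(N), \gamma'(N)$, with sides $\gamma|_{[0..N]}$, a geodesic $\alpha$ from $\gamma(N)$ to $\gamma'(N)$ of length at most $M$, and $\beta$. Slim triangles place $\gamma(p)$ within $\delta$ of $\alpha \cup \beta$; the choice of $N$ puts $\gamma(p)$ at distance more than $\delta$ from every point of $\alpha$, so $\dist(\gamma(p),\beta(s)) \leq \delta$ for some $s$, and since $\beta$ starts at $\gamma(0)$ we have $s \geq p - \delta$. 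The second triangle has vertices $\gamma(0), \gamma'(0), \gamma'(N)$, with sides a geodesic $\sigma$ from $\gamma(0)$ to $\gamma'(0)$ of length $k_0$, $\gamma'|_{[0..N]}$, and $\beta$. Slim triangles place $\beta(s)$ within $\delta$ of $\sigma \cup \gamma'|_{[0..N]}$; since $s \geq p - \delta > k_0 + \delta$, the point $\beta(s)$ cannot lie within $\delta$ of any point of $\sigma$, so $\dist(\beta(s),\gamma'(q)) \leq \delta$ for some $q$. Chaining the two estimates yields $\dist(\gamma(p),\gamma'(q)) \leq 2\delta$.

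For part (2), if $t \leq k_0 + 2\delta$ the triangle inequality through $\gamma(0)$ and $\gamma'(0)$ gives $\dist(\gamma(t),\gamma'(t)) \leq 2t + k_0 \leq 3k_0 + 4\delta$ directly. Otherwise, pick $T > t + M + 2\delta$ and apply Lemma \ref{lemma:thinquads} to $\gamma|_{[0..T]}$ and $\gamma'|_{[0..T]}$: since $k_T \leq M$, the hypothesis $k_0 + 2\delta < t < T - k_T - 2\delta$ holds, giving $\dist(\gamma(t),\gamma'(t)) \leq \min\{k_0,k_T\} + 4\delta \leq k_0 + 4\delta$. Part (3) follows similarly from Lemma \ref{lemma:thintriangle} applied to $\gamma|_{[0..T]}$ and $\gamma'|_{[0..T]}$ for any $T > t + M + 2\delta$, since $k_0 = 0$.

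The main obstacle is obtaining exactly $2\delta$ in part (1): one must chain the two slim-triangle estimates directly via $\beta(s)$, and not replace $\beta(s)$ by $\beta(p)$ using Lemma \ref{lemma:ladder}, since that substitution would cost an additional $\delta$. The rest is bookkeeping with the earlier lemmas.
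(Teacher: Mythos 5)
Your proposal is correct, and it follows the paper's approach very closely; parts (2) and (3) are essentially identical, using the same case split against $k_0+2\delta$ and then Lemmas \ref{lemma:thinquads} and \ref{lemma:thintriangle} respectively. For part (1), the paper asserts $2\delta$-slimness of the geodesic quad $\gamma|_{[0..T]}, \hat\gamma, \gamma'|_{[0..T]}, \tilga$ and rules out the two short sides; your argument is a worked-out version of that same quad-slimness, chaining two $\delta$-slim triangle estimates through the diagonal $\beta$ and tracking the parameter $s$ explicitly so that $\gamma(p)$ lands within $\delta$ of $\beta$ at a point far enough from $\gamma(0)$ to rule out $\sigma$. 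Your closing remark about not passing through Lemma \ref{lemma:ladder} is well taken: it is exactly what keeps the final bound at $2\delta$ rather than $3\delta$.
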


\begin{proof}

Choose $k>\sup_{t\in\infint}\dist(\gamma(t),\gap(t))$.

For $p>k+2\delta$, choose $T>p+k+2$ and consider a quad with sides $\gamma|_{[0..T]}$ and $\gap|_{[0..T]}$ together with geodesic segments $\tilga$ and $\hat{\gamma}$ connecting their endpoints. Since this quad is $2\delta$-slim, we have that $\gamma(p)$ must be within $2\delta$ of one of the other three sides, and by the triangle inequality it cannot be close to $\tilga$ or $\hat{\gamma}$. It follows that for some $q$, $\dist(\gamma(p),\gamma(q))\leq 2\delta$.

Given $t$, choose $T>t+k+2\delta$. If $t\leq \dist(\gamma(0),\gap(0))+2\delta$, then we see directly that
$$\dist(\gamma(t),\gap(t))
\leq\dist(\gamma(t),\gamma(0))+\dist(\gamma(0),\gap(0))+\dist(\gap(0),\gap(T))
$$
$$\leq \dist(\gamma(0),\gap(0))+2\delta+\dist(\gamma(0),\gap(0))+\dist(\gamma(0),\gap(0))+2\delta
$$
$$=3\dist(\gamma(0),\gap(0))+4\delta$$
as desired. Otherwise, the last part of Lemma \ref{lemma:thinquads} yields the desired result.

The last part follows from Lemma \ref{lemma:ladder} and the slim triangles condition or \cite[Lemma III.H.3.3]{BridsonHaefliger99}.
\end{proof}

\pparagraph{The boundary of a hyperbolic group.} We will now define a compact space, equipped with a $G$-action, known as the boundary of $G$ (see \cite[\S III.H.3]{BridsonHaefliger99} for details.) Recall that $[\gamma]$ is the equivalence class of all rays asymptotic to $\gamma$.

\begin{definition}
Let $\partial(G)$ be the set of all equivalence classes $[\gamma]$ as $\gamma$ ranges over geodesic rays in $G$. $G$ acts on $\partial G$ via left multiplication, so that $g\cdot[\gamma]$ is given by the class of $t\mapsto g\gamma(t)$.

To define a topology on $\partial G$, fix some basepoint $p\in G$. Given $\eta_n$ a sequence of points of $\partial G$ and $\eta\in\partial G$, we say that $\eta_n$ converges to $\eta$ if $\eta_n$ can be represented by a sequence of geodesics $\gamma_n$ with $\gamma_n(0)=\gamma_1(0)$ for all $n$ and every subsequence of $\gamma_n$ subconverges pointwise to a geodesic ray representing $\eta$. We topologize $\partial G$ so that a set $K$ is closed if and only if $K$ contains the limit of every convergent sequence of points of $K$.
\end{definition}

For any choice of basepoint $p\in G$, one obtains exactly the same topology (\cite[Proposition III.H.3.7]{BridsonHaefliger99}). We sometimes write $[\gamma]$ for the element of $\partial G$ represented by a geodesic ray $\gamma$.

\begin{lem}
\label{lemma:independence}
Let $(\gamma_n),(\gamma'_n)$ be sequences of geodesic rays such that $[\gamma_n]=[\gap_n]$ for all $n$ and $\gamma_n$ converges pointwise to some geodesic $\gamma$, if $\#\{\gamma'_n(0)\}<\infty$, then $\gamma'_n$ subconverges pointwise to some $\gamma'$ asymptotic to $\gamma$.
\end{lem}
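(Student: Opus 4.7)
The plan is to extract a subsequential limit of $(\gap_n)$ using Lemma \ref{lemma:arzela}, and then use the uniform fellow-traveller bound of Lemma \ref{lemma:asymptotic} to show that this limit is asymptotic to $\gamma$.

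First, I would verify the finiteness hypothesis of Lemma \ref{lemma:arzela} for the sequence $(\gap_n)$. Write $V_0 := \{\gap_n(0) : n \in \Nn\}$, which is finite by assumption. Since each $\gap_n$ is a geodesic, $\gap_n(t) \in B(t,\gap_n(0)) \subseteq \bigcup_{v \in V_0} B(t,v)$, a finite union of balls in $G$; balls in a finitely generated group are finite, so $\{\gap_n(t) : n \in \Nn\}$ is finite for each $t \in \infint$. Lemma \ref{lemma:arzela} then yields a subsequence $(\gap_{n_k})$ that converges pointwise to a geodesic ray $\gap$.

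It remains to show that $\gap$ is asymptotic to $\gamma$. Since $[\gamma_n] = [\gap_n]$, the two rays are asymptotic for each $n$, so Lemma \ref{lemma:asymptotic} gives
$$\dist(\gamma_n(t),\gap_n(t)) \leq 3\,\dist(\gamma_n(0),\gap_n(0)) + 4\delta$$
for every $n$ and every $t \in \infint$. Because $\gamma_n \to \gamma$ pointwise and the target is discrete, $\gamma_n(0)$ is eventually equal to $\gamma(0)$, while $\gap_n(0) \in V_0$; hence $\dist(\gamma_n(0),\gap_n(0))$ is uniformly bounded by some constant $C$ for all sufficiently large $n$. Passing to the subsequence $(n_k)$ and fixing $t$, both $\gamma_{n_k}(t)$ and $\gap_{n_k}(t)$ are eventually constant, equal to $\gamma(t)$ and $\gap(t)$ respectively, so the uniform bound descends: $\dist(\gamma(t),\gap(t)) \leq 3C + 4\delta$. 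Since this holds for every $t$, the rays $\gamma$ and $\gap$ are asymptotic.

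I do not anticipate a serious obstacle; the lemma is essentially a packaging of Arzela--Ascoli with the geometric input that asymptotic geodesics fellow-travel at distance controlled by their basepoints. The only subtlety is noting that the fellow-traveller bound in Lemma \ref{lemma:asymptotic} is \emph{uniform} in $n$ once $\gamma_n(0)$ stabilizes and $\gap_n(0)$ is confined to a finite set; this uniformity is what allows asymptoticity to pass to the limit pair $(\gamma,\gap)$.
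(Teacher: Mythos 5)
Your proof is correct and follows essentially the same path as the paper's: extract a pointwise subsequential limit $\gamma'$ of $(\gamma'_n)$ via Lemma~\ref{lemma:arzela}, then apply the uniform bound $\dist(\gamma_n(t),\gamma'_n(t)) \leq 3\dist(\gamma_n(0),\gamma'_n(0)) + 4\delta$ from Lemma~\ref{lemma:asymptotic} and pass to the limit. The only cosmetic difference is that the paper first passes to a subsequence on which $\gamma_n(0)$ and $\gamma'_n(0)$ are constant (which then lets it invoke the ``in particular'' clause of Lemma~\ref{lemma:arzela} directly), whereas you verify the finiteness hypothesis of that lemma by hand and then observe the relevant bound stabilizes; both are fine.
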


\begin{proof}
By passing to a subsequence, we may assume without loss of generality that $\gamma_n(0)$ and $\gap_n(0)$ are constant sequences. Let $k=\dist(\gamma_n(0),\gap_n(0))$. By Lemma \ref{lemma:arzela}, $\gamma'_n$ subconverges pointwise to some geodesic ray $\gap$. By Lemma \ref{lemma:asymptotic},
$$\dist(\gamma_n(t),\gap_n(t))\leq 3k+4\delta$$
for all $n$ and $t$. It follows that $\dist(\gamma(t),\gap(t))\leq
 3k+4\delta$ for all $t$, and hence $[\gamma]=[\gap]$.
\end{proof}

\subsection{Growth in a shortlex finite state automaton.}\label{subsection:GrowthofFSA}
A remarkable fact about hyperbolic groups is that the language of shortlex geodesics is regular---we recall the relevant definitions here.
For a detailed discussion see, for example,~\cite{wpig},~\cite{CalegariFujiwara}, and~\cite{dfw}

\begin{definition}
A  \em finite state automaton \em (FSA) on alphabet $\genset$ 
(where here $\genset$ is an arbitrary finite set) is a directed
graph whose edges are labeled by elements of $\genset$ (for a formal 
definition see, for example,~\cite{garey2002computers}).  
The vertices of the FSA  are called \em states\em.  Sometime we consider
FSAs that have a special state called \em start\em; in that case we only consider
finite directed paths starting at that state, and we assume that the FSA had been 
\em pruned\em, that is, states that cannot be reached from the start state have 
been removed.  Sometimes we consider FSAs without a start state, in
which case we consider all finite directed paths in the FSA.  The collection of
all words obtained by reading the edge labels of finite directed paths 
in an FSA (with or without a start state) forms
a subset of $S^*$ (the collection of  all finite words in $S$, including the empty
word); a subset of this form is called a
\em regular language\em.  
\end{definition}

\pparagraph{Notation.}  Let $\Gamma$ be an FSA with states $V(\Gamma)$. For a set of states $A\subset V(\Gamma)$, we let $\Gamma(A)$ denote the subgraph spanned by $A$ (itself an FSA).  We let $[\Gamma]$ denote the adjacency matrix (i.e., if we number the states $\{a_1,\ldots,a_n\}$, $[\Gamma]_{ij}$ denotes the number of transitions from $a_j$ to $a_i$). 
If a word $w\in S^*$ labels a valid path from a state $a$ to a state $b$, we write $a{\buildrel w\over\To} b$.  
If $a,b\in V(\Gamma)$ are such that $a{\buildrel w\over\To} b$ and $b{\buildrel w'\over\To} a$ (for some $w,w' \in S^*$), 
we say that $a\approx b$.  It is clear that $\approx$ is an equivalence relation
(note that $a\approx a$ always holds, as the path may have length zero). The equivalence classes are called components.  The Perron Frobenuis theorem asserts that:

\begin{lem}
\label{lemma:PF}
If $A\subset V(\Gamma)$ is a component and $\#A\geq 2$, then the largest modulus eigenvalue $\lambda_A$ of $[\Gamma(A)]$ is positive and has a positive left eigenvector.
\end{lem}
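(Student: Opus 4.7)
The plan is to reduce the statement to the classical Perron--Frobenius theorem for non-negative irreducible matrices. The key step is to verify that the restricted FSA $\Gamma(A)$ is strongly connected, so that its adjacency matrix $[\Gamma(A)]$ is irreducible; then Perron--Frobenius applies directly.

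First I would check strong connectedness. Since $A$ is a component (an equivalence class of $\approx$) and $\#A \geq 2$, for any two states $a, b \in A$ there exist paths $a \buildrel w\over\to b$ and $b \buildrel w'\over\to a$ in $\Gamma$. Since $\#A \geq 2$, at least one such path (say $a \to b$ for $b \neq a$) has positive length, so each state has outgoing edges and $[\Gamma(A)]$ is nonzero. I would then verify that the paths witnessing $a \approx b$ stay inside $A$: if $c$ is any intermediate state on the path $a \to b$, then the prefix gives $a \to c$ and the suffix composed with $b \to a$ gives $c \to a$, so $c \approx a$ and hence $c \in A$. Thus $\Gamma(A)$ is strongly connected as a directed graph.

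Second, I would invoke the Perron--Frobenius theorem for non-negative irreducible matrices (which can be cited from any standard reference on non-negative matrices): for a non-negative irreducible matrix $M$ that is not the $1\times 1$ zero matrix, the spectral radius is a positive simple eigenvalue possessing strictly positive left and right eigenvectors. Applying this to $M = [\Gamma(A)]$, whose transpose corresponds to the reverse FSA and is again irreducible, yields a strictly positive left eigenvector with eigenvalue $\lambda_A > 0$, as desired.

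The only potential subtlety is the translation between the combinatorial notion of strong connectedness of $\Gamma(A)$ and the algebraic notion of irreducibility of $[\Gamma(A)]$; this is a standard equivalence (the $(i,j)$ entry of $[\Gamma(A)]^k$ counts directed paths of length $k$ from state $j$ to state $i$, so strong connectedness forces some power of $M$ to have all entries positive in each position). Once irreducibility is in hand, the conclusion is immediate from Perron--Frobenius, so I do not anticipate any significant obstacle beyond cleanly recording these observations.
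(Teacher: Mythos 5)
Your proposal is correct and takes essentially the same route as the paper, which simply cites the Perron--Frobenius theorem without spelling out details; you fill in the verification that $\Gamma(A)$ is strongly connected (intermediate states of witnessing paths remain in $A$), hence $[\Gamma(A)]$ is irreducible, and that $\#A \geq 2$ guarantees a nonzero matrix so the spectral radius is positive. One small caution on phrasing: strong connectedness gives, for each pair $(i,j)$, \emph{some} power $k=k(i,j)$ with $([\Gamma(A)]^k)_{ij}>0$; it does not in general give a single power with all entries positive (that would be primitivity), but irreducibility is all you need and that is what you have.
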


\pparagraph{The shortlex automaton.} 
Recall our convention that $\genset$ is a symmetric generating set for the one-ended hyperbolic group $G$.  We say that $s_1\cdots s_\ell\in\genset^*$ is a \em geodesic \em if $\ell$ is the minimal length of any word representing the same element of $G$ as $s_1\cdots s_\ell$. The collection of all geodesic words forms a regular language~\cite[Theorem 3.4.5]{wpig}. Order the elements of $\genset$---this induces a lexicographic order on $\genset^*$. A word  $s_1\cdots s_\ell$ is a \em shortlex geodesic \em 
if  it is a geodesic and no geodesic representing the same group
element precedes it in the lexicographic order.
The set of all shortlex geodesics forms  a regular language~\cite[Proposition 2.5.2]{wpig}, called \em the language of shortlex geodesics \em in $G$ (and with generators $\genset$.)

\begin{definition}
Let $\lambda:=\lim_{i\To\infty}\#B(i,1_G)^{1/i}$ be the growth rate of $G$ with respect to $\genset$~(see for example \cite{dfw}). Let $\Mm$ denote a pruned FSA for the language of shortlex geodesics in $G$, and let $\Aa$ denote the vertex set of $\Mm$.
\end{definition}

We are going to show that $\lambda$ is an eigenvalue of the transition matrix $[\Mm]$ with a left eigenvector supported on a certain set of states (later we shall see that these states are dense in $G$.) Write $\lambda_B$ for the Perron-Frobenius eigenvalue of a component $B\subset\Aa$. By \cite[Theorem 3.3, Corollary 3.7]{dfw}, $\lambda$ is equal to the maximum of the $\lambda_B$. We say that a component $B$ is {\em big} if $\lambda=\lambda_B$.

Partition $\Aa$ into sets $\Aamax\sqcup\Aabig\sqcup\Aamin$ where
\begin{itemize}
\item $\Aabig$ is the union of the big components.
\item $\Aamin$ consists of all states that cannot lead to a big component.
\item $\Aamax$ consists of everything else---i.e., states which are not in a big component but may lead to a big component.
\end{itemize}

\begin{prop}
\label{proposition:eigenvector}
There is a left eigenvector $\mu$ of $[\Mm]$ with eigenvalue $\lambda$ such that $\mu_i>0$ for $a_i\in\Aamax\cup\Aabig$ and $\mu_i=0$ for $a_i\in\Aamin$.
\end{prop}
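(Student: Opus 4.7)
The plan is to decompose $\Mm$ into its strongly connected components $C_1,\ldots,C_N$ and construct $\mu$ component by component in reverse topological order. Order the components so that if $C_q$ is strictly upstream of $C_p$ then $q<p$; then $[\Mm]$ is block upper-triangular with diagonal blocks $[\Mm(C_q)]$, and writing $\mu=(\mu^{(1)},\ldots,\mu^{(N)})$ blockwise the eigenvector equation $\mu[\Mm]=\lambda\mu$ becomes
$$
\mu^{(q)}\bigl(\lambda I-[\Mm(C_q)]\bigr)=\sum_{p>q}\mu^{(p)}\,[\Mm]^{(p,q)}=:b^{(q)}
$$
for each $q$, where $[\Mm]^{(p,q)}$ records the transitions from $C_q$ to $C_p$. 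Processing $q$ from $N$ down to $1$ makes $b^{(q)}$ depend only on blocks already defined. I split into three cases by which of $\Aamin, \Aamax, \Aabig$ contains $C_q$.

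For $C_q\subset\Aamin$, forward-closure of $\Aamin$ (a successor of a state that cannot reach a big component also cannot reach one) forces every strictly downstream component to lie in $\Aamin$, so $b^{(q)}=0$ by induction; since $\lambda_{C_q}<\lambda$, $\lambda I-[\Mm(C_q)]$ is invertible, and thus $\mu^{(q)}=0$. For $C_q\subset\Aamax$, the same invertibility holds with non-negative inverse $\sum_{n\geq 0}[\Mm(C_q)]^n/\lambda^{n+1}$; moreover $b^{(q)}\geq 0$ is not identically zero, because $C_q$ reaches a big component and so has an immediate out-neighbor component in $\Aabig\cup\Aamax$ on which $\mu>0$ by induction. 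Setting $\mu^{(q)}=b^{(q)}(\lambda I-[\Mm(C_q)])^{-1}$ then yields a strictly positive vector, since strong connectivity of $C_q$ makes every entry of the resolvent positive (the single-vertex subcase is trivial).

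For $C_q\subset\Aabig$ the situation is delicate: $\lambda$ is now an eigenvalue of $[\Mm(C_q)]$. Lemma~\ref{lemma:PF} also supplies a strictly positive \emph{right} Perron eigenvector of $[\Mm(C_q)]$, so the equation $\mu^{(q)}(\lambda I-[\Mm(C_q)])=b^{(q)}$ is solvable only when $b^{(q)}$ is orthogonal to it, which, given $b^{(q)}\geq 0$, forces $b^{(q)}=0$. The plan therefore invokes the structural fact that \emph{no big component is strictly upstream of any other component of $\Aabig\cup\Aamax$}. Granting this, every component strictly downstream of $C_q$ lies in $\Aamin$, so $b^{(q)}=0$ by induction and Lemma~\ref{lemma:PF} provides the desired positive left Perron eigenvector of $[\Mm(C_q)]$, which we take as $\mu^{(q)}$.

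The main obstacle is establishing that structural fact for shortlex FSAs of hyperbolic groups. A chain $B\to B'$ of two distinct big components would, by independently pumping through $B$ and through $B'$, force the number of length-$n$ paths in $\Mm$ from the start state to grow at least like $n\lambda^n$, and hence $\#B(n,1_G)$ would grow like $n\lambda^n$ — contradicting the standard asymptotic $\#B(n,1_G)\asymp\lambda^n$ for hyperbolic groups. The required growth estimate and spectral analysis of $[\Mm]$ should both be available from \cite{dfw} (the same reference cited for $\lambda=\max_B\lambda_B$); once they are in hand, collecting the blocks $\mu^{(q)}$ yields the desired eigenvector.
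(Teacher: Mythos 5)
Your proposal is correct and rests on the same three pillars as the paper's proof: (a) a structural fact about big components not having downstream components in $\Aabig\cup\Aamax$, (b) the resolvent series $(\lambda I-A)^{-1}=\sum_{n\ge 0}\lambda^{-n-1}A^n$ for the non-big components, and (c) Perron--Frobenius for the big components, padded with zeros on $\Aamin$. The organizational difference is that you build $\mu$ one strongly connected component at a time in topological order, while the paper collapses the argument to three coarse blocks $\Aamin$, $\Aamax$, $\Aabig$ and builds $\mu_0$ on $\Aamax\sqcup\Aabig$ in a single resolvent step; your inductive version makes more transparent why the $\Aamax$-coordinates are strictly positive (a max component always has an out-neighbor component in $\Aabig\cup\Aamax$ on which $\mu$ is already positive), which the paper establishes in one lumped computation. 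Two small points to tidy up. First, the structural fact you need is that no big component is upstream of \emph{any} component of $\Aabig\cup\Aamax$, which is slightly more than ``no big-to-big path'' but follows from it (a big-to-max path extends to a big-to-big path); the paper obtains this from Calegari's Lemma 3.4.2 (citing Coornaert's $\#B(n,1_G)=\Theta(\lambda^n)$), and your pumping sketch is essentially the proof of that lemma, so you may as well cite it rather than hoping \cite{dfw} contains it. Second, with the paper's convention $[\Mm]_{ij}=\#\{\text{transitions from }a_j\text{ to }a_i\}$, your topological ordering makes $[\Mm]$ block \emph{lower}-triangular, not upper; this does not affect the displayed eigenvector equations, which you have written correctly.
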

\begin{proof}

We first construct a positive eigenvector $\mubig$ of $[\Mm(\Aabig)]$ with eigenvalue $\lambda$, then a positive eigenvector $\mu_0$ of $[\Mm(\Aamax\sqcup\Aabig)]$ with eigenvalue $\lambda$, then the desired eigenvector $\mu$.

(1) By \cite[Lemma 3.4.2]{calegari}, there is no path from one big component to another (this is a moral equivalent of the fact,
proved by Coornaert\cite{coornaertpacific}, that the growth of $G$ is precisely exponential, i.e., $\#B(n,g)=\Theta(\lambda^n)$.)
It follows that we may write $[\Mm(\Aabig)]$ as a block diagonal matrix
$$[\Mm(\Aabig)]=\begin{bmatrix}A_1& & \\ &\ldots & \\ & & A_n\end{bmatrix}$$
where each $A_i$ is $[\Mm(B)]$ for some big component $B$. Letting $\mu_i$ be the PF eigenvector for $A_i$, we have that $\mubig:=[\mu_1\:\cdots\:\mu_n]$ is a positive eigenvector for $[\Mm(\Aabig)]$ with eigenvalue $\lambda$.

\def\identity{{\mathbb{I}}}
(2) We may write
$$[\Mm(\Aamax\sqcup\Aabig)]=\begin{bmatrix}[\Mm(\Aamax)]& 0 \\B &[\Mm(\Aabig)]\end{bmatrix}$$
for some matrix $B$. Observe that $(\lambda\identity-[\Mm(\Aamax)])$ is invertible 
(where by $\identity$ we mean the identity matrix), 
with inverse given by
$$(\lambda\identity-[\Mm(\Aamax)])^{-1}=\lambda(\identity+\lambda^{-1}[\Mm(\Aamax)]+\lambda^{-2}[\Mm(\Aamax)]^2+\ldots)$$
where the series (which is nonnegative) converges because $\lambda$ is greater than any eigenvalue of $[\Mm(\Aamax)]$. We now see that
$$\mu_0:=[\mubig B(\lambda\identity-[\Mm(\Aamax)])^{-1}\quad\mubig]$$
is an eigenvector for $[\Mm(\Aamax\sqcup\Aabig)]$ by the following calculations. Write $\nu$ for $\mubig B(\lambda\identity-[\Mm(\Aamax)])^{-1}$.
$$\mubig B=\nu(\lambda\identity-[\Mm(\Aamax)])$$
Hence:
$$\nu[\Mm(\Aamax)]+\mubig B=\lambda\nu$$
which implies that $[\nu \quad \mubig]$ is a nonnegative left eigenvector of $[\Mm(\Aamax\sqcup\Aabig)]$ as desired, so we wish to show that it is positive.

 Because each state of $\Aamax$ may lead to a state of $\Aabig$, we see that for all $a_i\in\Aamax$, there is some $a_j\in\Aabig$ and $k\geq 0$ such that $[B[\Mm(\Aamax)]^k]_{ji}>0$. By the geometric series formula for $(\lambda\identity-[\Mm(\Aamax)])^{-1}$ and the fact that every $[\mubig]_j$ is positive, we thus see that every $[\mubig B(\lambda\identity-[\Mm(\Aamax)])^{-1}]_i$ is positive, and hence $\mu_0$ is positive.

(3) Finally, we may write
$$[\Mm]=\begin{bmatrix}[\Mm(\Aamax\cup\Aabig)]& 0 \\\ast &[\Mm(\Aamin)]\end{bmatrix}$$
and take $\mu:=[\mu_0\quad 0]$ as our desired eigenvector.
\end{proof}

\subsection{Horofunctions and their derivatives}
\label{subsection:horofunctions}

\begin{definition}
\rm 
Let $h:G\To\Zz$ be a $1$-Lipschitz function. The \em derivative \em 
$$\deriv h:G\To\mioi^\genset$$ 

of $h$ is the function
$$\deriv h: g\mapsto (s\mapsto h(gs)-h(g)).$$
\end{definition}

The following lemma says that two functions with the same derivative differ by a constant, as one might expect.

\begin{lem}
\label{lemma:DrivativeDeterminesUpToConstant}
Let \(h_{1}, h_{2}:\Gamma \to \mathbb{Z}\) be \(1\)-Lipschitz functions.  If \(\deriv h_{1} = \deriv h_{2}\)
then \(h_{1} - h_{2}\) is constant.
\end{lem}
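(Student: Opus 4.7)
The plan is to let $f := h_1 - h_2$ and show $f$ is constant on $G$ by unwinding the definition of the derivative. From $\deriv h_1 = \deriv h_2$ we get, for every $g \in G$ and every $s \in \genset$,
\[
h_1(gs) - h_1(g) = h_2(gs) - h_2(g),
\]
which rearranges to $f(gs) = f(g)$. So the first step is simply to observe that $f$ is invariant under right multiplication by any generator.

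Next I would promote this one-step invariance to invariance under right multiplication by any element of $G$. Given arbitrary $g_1, g_2 \in G$, write $g_1^{-1} g_2 = s_1 s_2 \cdots s_\ell$ as a word in $\genset^*$ (possible since $\genset$ generates $G$), and apply the one-step relation inductively along the path $g_1, g_1 s_1, g_1 s_1 s_2, \ldots, g_2$ to conclude $f(g_1) = f(g_2)$. Hence $f$ is constant.

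There is no real obstacle here: the lemma is essentially a restatement of the definition of $\deriv$, combined with the fact that $\genset$ generates $G$. The $1$-Lipschitz hypothesis is not actually needed for the argument (it only ensures $\deriv h$ takes values in $\{-1,0,1\}^\genset$, which is relevant for the SFT setup but not for this particular deduction).
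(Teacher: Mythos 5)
Your proof is correct, and it is essentially the same idea as the paper's, just made self-contained: the paper cites \cite[Lemma 3.4]{DavidCohen17}, which says that $h(g)-h(g')$ can be recovered by integrating $\deriv h$ along a path from $g'$ to $g$; your argument unwinds exactly this integration for the difference $f = h_1 - h_2$, using the fact that $\genset$ generates $G$ to produce the path. Your observation that the $1$-Lipschitz hypothesis plays no role in the deduction is also correct (it is only needed so that $\deriv h$ lands in the fixed finite alphabet $\mioi^\genset$ used for the SFT). The only small caveat is that the statement is phrased on $\Gamma$ rather than $G$; for the argument to apply one needs $\Gamma$ to be connected (as the Cayley graph is), which is what guarantees the required path exists — but this is implicit in the paper as well.
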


\begin{proof}
\cite[Lemma 3.4]{DavidCohen17} implies that, for a Lipschitz function $h$, $h(g)-h(g')$ may be recovered from $\deriv h|_p$ where $p$ is a path connecting $g$ to $g'$. It follows that $h_1-h_2$ is constant.
\end{proof}

There are multiple (essentially but not entirely equivalent) definitions of ``horofunction'' in the literature. We will use the following:
\begin{definition}
\rm
An {onto} $1$-Lipschitz function 
$h:G\To \Zz$ is said to be a \em horofunction \em if the derivative 
$\deriv h$ is in the orbit closure of the derivative of 
the function
$$g\mapsto \dist(g,1_G)$$
Level sets of horofunctions will be referred to as \em horospheres. 

\end{definition}

For example, the horofunctions $\Zz\to\Zz$, with the integers generated by $\pm 1$, are given by $n\mapsto n+C$ and $n\mapsto -n+C$ as $C$ ranges over $\Zz$.

Note that functions in the actual orbit of $g\mapsto \dist(g,1_G)$ are not onto $\Zz$, but only some 
$\Zz_{\geq N}$, and so only limit points of an unbounded orbit of such functions can possibly be 
horofunctions. The next lemma makes this precise:

\begin{lem}
\label{lem:horofunctions}
A function $h:G \to \mathbb{Z}$ is a horofunction if and only 
if there \(g_{0} \in G\) and a sequence
\((g_{n})_{n=1}^{\infty}\)   of distinct elements of \(G\)
and such that $h$ is the pointwise limit of the sequence  \((f_{n})_{n=1}^{\infty}\) where 
$$f_{n}(g):= \dist(g,g_n)-\dist(g_n,g_0)$$
\end{lem}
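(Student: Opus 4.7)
The plan is to prove each direction of the equivalence using Lemma~\ref{lemma:DrivativeDeterminesUpToConstant} as the bridge from derivative-level to pointwise information. \emph{For the ``only if'' direction:} assume $h$ is a horofunction. The $G$-orbit of $\deriv d_{0}$ (with $d_{0}(g) = \dist(g, 1_{G})$) under the right action is $\{\deriv \dist(\cdot, g') : g' \in G\}$, so there is a sequence $(g_{n})$ with $\deriv \dist(\cdot, g_{n}) \to \deriv h$ pointwise. The $g_{n}$ may be taken distinct: otherwise $\deriv h = \deriv \dist(\cdot, g)$ for some fixed $g$, and Lemma~\ref{lemma:DrivativeDeterminesUpToConstant} would force $h = \dist(\cdot, g) - c$ for some constant $c$, whose range is bounded below and thus not all of $\Zz$. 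Pick $g_{0}$ with $h(g_{0}) = 0$ (by surjectivity) and set $f_{n}(g) := \dist(g, g_{n}) - \dist(g_{n}, g_{0})$, so that $f_{n}(g_{0}) = 0 = h(g_{0})$ and $\deriv f_{n} = \deriv \dist(\cdot, g_{n}) \to \deriv h$. Since the derivatives take values in the finite set $\mioi^{\genset}$, this convergence is eventual equality on any finite subset of $G$; summing $\deriv f_{n} = \deriv h$ along any fixed path from $g_{0}$ to $g$ (as in the proof of Lemma~\ref{lemma:DrivativeDeterminesUpToConstant}) gives $f_{n}(g) = h(g)$ for $n$ large, so $f_{n} \to h$ pointwise.

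\emph{For the ``if'' direction:} assume $h = \lim_{n} f_{n}$ pointwise, with $f_{n}$ as described and the $g_{n}$ distinct. Each $f_{n}$ is $1$-Lipschitz and $\Zz$-valued, properties $h$ inherits (a convergent sequence of integers is eventually constant), with $h(g_{0}) = 0$. Each $\deriv f_{n} = \deriv \dist(\cdot, g_{n})$ lies in the $G$-orbit of $\deriv d_{0}$, and pointwise convergence $f_{n} \to h$ yields pointwise convergence $\deriv f_{n} \to \deriv h$, placing $\deriv h$ in the orbit closure.

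The main obstacle is showing $h$ is onto $\Zz$: the range of $h$ is an integer interval containing $0$, so it suffices to show it is unbounded both above and below. \emph{Below:} distinctness of the $g_{n}$ (with finite balls in $G$) forces $\dist(g_{n}, g_{0}) \to \infty$, so for any $N > 0$ and $n$ sufficiently large, the point $x_{n}$ at distance $N$ from $g_{0}$ along a geodesic $g_{0} \to g_{n}$ satisfies $f_{n}(x_{n}) = -N$; finiteness of $B(N, g_{0})$ then yields some $x$ equal to $x_{n}$ for infinitely many $n$, so $h(x) = -N$. \emph{Above:} we invoke one-ended hyperbolicity. By Lemma~\ref{lemma:arzela}, a subsequence of the geodesics $g_{0} \to g_{n}$ converges to a ray representing some $\xi \in \partial G$; since $|\partial G| \geq 2$ we may pick $\xi' \neq \xi$ represented by a ray $\tau$ based at $g_{0}$. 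The $\delta$-hyperbolic divergence of rays to distinct boundary points bounds the Gromov product $(g_{n} | \tau(k))_{g_{0}}$ by a constant $C$ independent of both $n$ and $k$ (for $n, k$ sufficiently large); writing $\dist(\tau(k), g_{n}) = k + \dist(g_{n}, g_{0}) - 2(g_{n} | \tau(k))_{g_{0}}$ gives $f_{n}(\tau(k)) \geq k - 2C$, whence $h(\tau(k)) \geq k - 2C$ is arbitrarily large.
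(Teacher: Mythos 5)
Your proof is correct. The ``only if'' direction follows the same path as the paper's: extract $(g_n)$ from the orbit-closure condition, pass to a distinct subsequence (ruling out the constant case via surjectivity of $h$, exactly as the paper does), normalize so $h(g_0)=0$, and deduce pointwise convergence of $f_n$ to $h$ via Lemma~\ref{lemma:DrivativeDeterminesUpToConstant}.

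Where you genuinely depart from the paper is the ``if'' direction. The paper disposes of it with the single sentence ``The converse follows from the definitions,'' but the definition of horofunction requires $h$ to be \emph{onto} $\Zz$, and that is not automatic from $h=\lim f_n$ with the $g_n$ distinct. Your two-part argument --- unboundedness below from points at distance $N$ along geodesics toward the escaping $g_n$ (with finiteness of $B(N,g_0)$ to pin down a limiting point), and unboundedness above by running along a ray $\tau$ to a boundary point $\xi'\neq\xi$ (where $\xi$ is the limit of the geodesics $g_0\to g_n$ after subsequencing) and bounding the Gromov products $(g_n\mid\tau(k))_{g_0}$ --- supplies the verification the paper omits. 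Note that your unbounded-above step uses the standing hypotheses on $G$ in an essential way (one-ended, hence non-elementary, hence $\partial G$ has at least two points), so the converse is not purely definitional; your proof is the honest version of it. One could alternatively reach unboundedness above via the paper's own tools (e.g., Lemma~\ref{lemma:DipSomeMore} applied with $g_1,g_2$ far apart on a common level set), but the boundary argument you give is clean and correct as written.
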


\begin{proof}
Let \(h\) be a horofunction.  We will produce the points \(g_{n}\).
By definition, there exists a sequence of sets \(S_{n} \subset G\), 
\(n \in \Nn\)
satisfying:
	\begin{itemize}
	\item \(S_{n} \subset S_{n+1}\) for all \(n \in \Nn\)
	\item \(\bigcup S_{n} = G\)
	\item For each \(n\in\Nn\), there exists \(g_{n} \in G\) for which \(
	\deriv\dist(\cdot,g_{n})
	|_{S_{n}} = \deriv h|_{S_{n}}\)
	\end{itemize}
Note that these conditions imply that for any \(m \geq n\) we have that	
\(
\deriv\dist(\cdot,g_{m})
|_{S_{n}} = \deriv h|_{S_{n}}\).
By restricting to subsets of \(S_{n}\) we may assume that the graph spanned by \(S_{n}\)
is connected for each \(n\). 
 	
Since \(h\) is onto \(\mathbb{Z}\), there exists \(g_{0} \in G\) for which \(h(g_{0}) = 0\) is satisfied. Moreover, for each $r>0$, there is some $N$ such that for all $n>N$, the ball of radius $r$ centered at $g_0$ is contained within $S_n$.

Since each \(f_n\) defined in the statement of the lemma differs from \(\dist(g,g_{n})\) only by a constant, \(\deriv f_{n} = \deriv \dist(g,g_{n})\).
By the conditions above we see that \(f_{n}\) satisfies:
	\begin{itemize}
	\item \(\deriv f_{n}|_{S_{n}} = \deriv h|_{S_{n}}\)
	\item \(f_n(g_{0}) = h(g_{0})\)
	\end{itemize}
As \(S_{n}\) is connected, by Lemma~\ref{lemma:DrivativeDeterminesUpToConstant},
condition~(1) above implies that 
\(f_{n}|_{S_{n}} = h|_{S_{n}}\).  We see that
\[
\lim_{n \to \infty}f_{n} = h
\]
It remains to show that the elements may be taken as distinct. Suppose not. Then after subsequencing if necessary we may assume that 
\((g_{n})_{n=1}^{\infty}\) is a constant sequence.  In that case \(h(g) = \dist(g,g_{n})+C\) for some constant $C\in\Zz$, contradicting the 
assumption that \(h\) is onto.

The converse follows from the definitions.
\end{proof}

\begin{lem}
\label{lemma:DipSomeMore}
Let $h$ be a horofunction and $g_1,g_2 \in G$.  Suppose that
$h(g_1) = h(g_2)$.  If, for some $x \in \Nn$, we have that
$\dist(g_1,g_2) > 2x + 2\delta$, then for any geodesic
$\gamma:[0..\dist(g_1,g_2)]$ connecting $g_1$ and $g_2$ we have
$$h(\gamma(x)) \leq h(g_1) - (x - 2\delta)$$
\end{lem}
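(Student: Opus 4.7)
The plan is to realize $h$ locally as a shifted distance function to a faraway basepoint, and then to exploit the slim triangle condition on a triangle with that basepoint as one vertex. By Lemma~\ref{lem:horofunctions}, $h$ is the pointwise limit of functions of the form $f_n(g) = \dist(g,g_n) - \dist(g_n,g_0)$. Since $f_n$ and $h$ are both integer-valued, and since we only care about the values at three points ($g_1$, $g_2$, and $\gamma(x)$), we may fix a single $n$ large enough so that $f_n$ agrees with $h$ on all three. In particular, the hypothesis $h(g_1) = h(g_2)$ translates into $\dist(g_1,g_n) = \dist(g_2,g_n) =: R$, and the desired inequality is equivalent to
\[
\dist(\gamma(x),g_n) \leq R - x + 2\delta.
\]

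Next I would form a geodesic triangle with vertices $g_1, g_2, g_n$: the side from $g_1$ to $g_2$ is $\gamma$, and I pick geodesics $\alpha,\beta$ from $g_1,g_2$ to $g_n$ respectively, each of length $R$. By $\delta$-hyperbolicity, the point $\gamma(x)$ is within $\delta$ of some point on $\alpha\cup\beta$, giving two cases. In the case where $\gamma(x)$ lies within $\delta$ of $\alpha(t)$, the triangle inequality forces $t \geq x - \delta$, which immediately yields $\dist(\gamma(x),g_n) \leq \delta + (R-t) \leq R - x + 2\delta$. In the case where $\gamma(x)$ is within $\delta$ of $\beta(s)$, the triangle inequality forces $s \geq (d-x) - \delta$ where $d = \dist(g_1,g_2)$, and one needs $d \geq 2x$ to close the estimate; this is exactly where the hypothesis $d > 2x + 2\delta$ is consumed.

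The main obstacle is a minor bookkeeping one, namely using the approximation lemma cleanly: one must make sure that picking a single $n$ makes $f_n$ coincide with $h$ simultaneously at the three relevant points. This is straightforward because integer-valued pointwise convergence on a finite set is eventual equality, so no uniformity argument is needed. Once $g_n$ is chosen, the slim-triangle estimate is routine, and the hypothesis $\dist(g_1,g_2) > 2x + 2\delta$ is used in exactly the right place to handle the second case.
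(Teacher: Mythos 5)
Your proof is correct and follows essentially the same approach as the paper: realize $h$ near the relevant points as a shifted distance function to a basepoint $g_n$ via Lemma~\ref{lem:horofunctions}, then apply the slim-triangle condition to the triangle $g_1,g_2,g_n$. The one small difference is in the case where $\gamma(x)$ is $\delta$-close to the side $\beta$ from $g_2$: the paper rules this out by contradiction (showing it would force $\dist(g_1,g_2)\leq 2x+2\delta$), while you verify the desired bound directly there, which only needs $\dist(g_1,g_2)\geq 2x$; your version is slightly cleaner and more symmetric, and your use of the triangle inequality in place of Lemma~\ref{lemma:ladder} in the first case is an equivalent, marginally more elementary route to the same $R-x+2\delta$ bound.
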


\begin{proof}
By reorienting $\gamma$ if necessary we may assume that \(\gamma(0) = g_{1}\).
By Lemma~\ref{lem:horofunctions}, there exists $g_0 \in G$ and $C \in \Nn$
so that for all $t  \in [0..\dist(g_1,g_2)]$ we have that 
$h(\gamma(t)) = \dist(g_0,\gamma(t)) - C$.  For \(i=1,2\), let \(\gamma_{i}\) be a geodesic
from \(g_{i}\) to \(g_{0}\) (so that \(\gamma_{i}(0) = g_{i}\)).  By the slim triangle inequality, 
\(\gamma(x)\) is within \(\delta\) of some point of \(\gamma_{1}\) or \(\gamma_{2}\), say \(p\).
We claim that \(p \not\in \gamma_{2}\); assume that it is.  Then \(\dist(g_{1},p) \leq x+\delta\).
Again by the triangle inequality, \(\dist(g_{0},p) \geq \dist(g_{0},g_{1})- (x+\delta)= \dist(g_{0},g_{2})- (x+\delta)\), and so,
as \(p\) is on a geodesic connecting \(g_{0}\) and \(g_{2}\), we have that \(\dist(p,g_{2}) \leq x+\delta\).
This shows that \(\dist(g_{1},g_{2}) \leq 2x + 2\delta\), contradicting our assumption.

\vspace{\baselineskip}
\psfrag{a}[r][r]{$g_1$}
\psfrag{b}[][]{\small $x$}
\psfrag{c}[][]{$\gamma(x)$}
\psfrag{d}[l][l]{$g_2$}
\psfrag{e}[l][l]{$\leq x+\delta$}
\psfrag{f}[l][l]{$\geq \dist(g_0,g_2)-(x+\delta)$}
\psfrag{g}[][]{\small $ \delta$}
\psfrag{h}[][]{}
\psfrag{i}[][]{}
\psfrag{j}[][]{}

\centerline{\includegraphics[scale=.4]{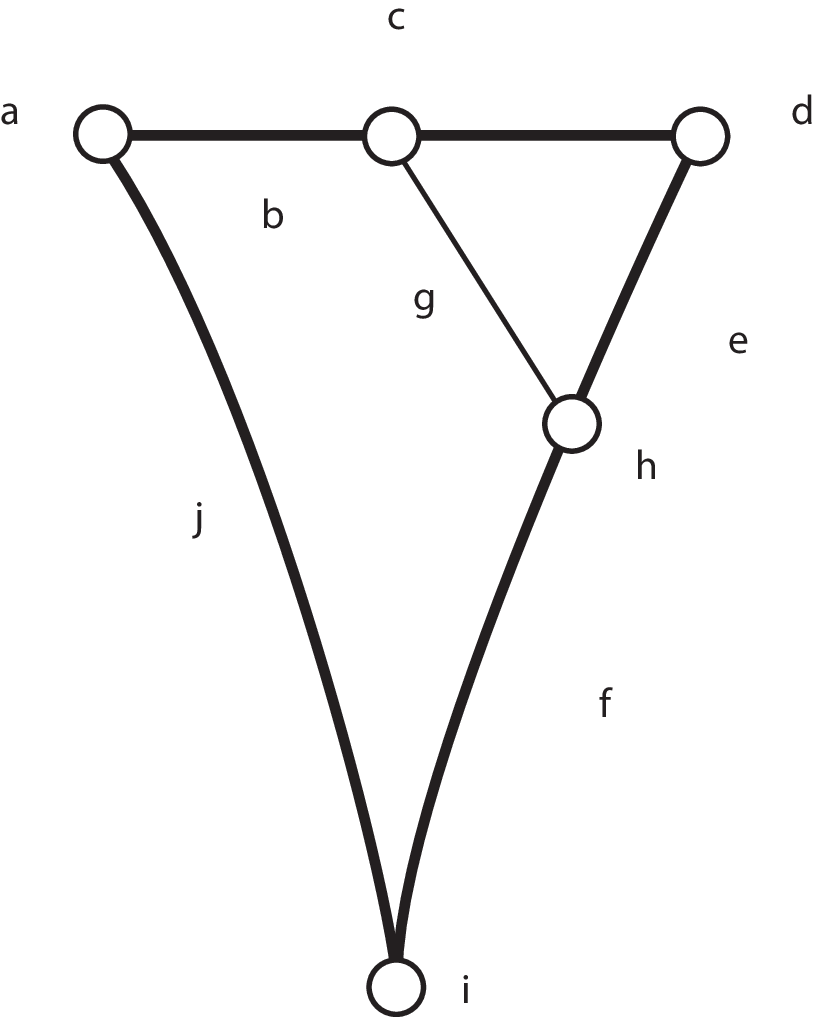}}

Therefore \(p \in \gamma_{1}\).  By Lemma~\ref{lemma:ladder} we have that 
\(\dist(\gamma(x),\gamma_{1}(x)) \leq 2 \dist(\gamma(x),p) \leq 2\delta\) and so \[
\dist(g_{0},\gamma(x)) \leq
\dist(g_{0},\gamma_{1}(x)) + \dist(\gamma_{1}(x),\gamma(x)) \leq
\dist(g_{0},g_{1}) - x + 2 \delta
\]
Thus
\[
h(\gamma(x)) = \dist(g_{0},\gamma(x)) - C
\leq  \dist(g_{0},g_{1}) - x + 2 \delta - C = 
h(g_{1}) - (x - 2\delta)
\]
\end{proof}

\section{Translation-like \(\mathbb{Z}\) actions}\label{section:TranslationlikeZactions}
A theorem of Seward asserts that every one or two-ended connected graph in which the degrees of the vertices are bounded  admits 
a translation-like $\mathbb Z$ action~\cite[Theorem~3.3]{seward}. (See below for the definition of translation-like $\mathbb Z$ action.)
Bowditch~\cite{CutPointsAndCanonicalSplittingsOfHyperbolicGroups} shows that horospheres have an arbitrarily large  number ends, and a result of Bonk and Kleiner~\cite{BonkKleiner} 
suggests that a divergence graph on a horosphere is quasi-isometric to that
horosphere. We provide a 
generalization of Seward's work, given in Proposition~\ref{translationLikeActionThm}
below,  producing a translation-like $\mathbb Z$ action on any connected infinite graph of uniformly bounded degree. This proposition plays an  important role in demonstrating the existence of our populated shellings (Lemma~\ref{densityOnHorofunctions}).

We first define:

\bigskip

\begin{definition}
Let \(\Gamma\) be a graph and \(L\) a positive integer.  
A \em translation-like \(\mathbb{Z}\) action with defect \(L\) \em on \(\Gamma\) is a bijection 
\(f:V(\Gamma) \to V(\Gamma)\) (here \(f\) is thought of as the generator of \(\mathbb{Z}\))
satisfies, for any \(x \in \Gamma\):

\begin{itemize}
\item \(\dist(x,f(x)) \leq L\)
\item \(f^{i}(x) = x\) only for \(i=0\)
\end{itemize}
\end{definition}

\begin{rmk}
\label{RemarkAboutPaths}
If \(\Gamma\) admits a translation-like \(\mathbb{Z}\)-action with defect \(L\) 
then the orbit of a vertex \(x\) is an injective map \(\mathbb{Z} \to \Gamma\)
for which the distance between the images of consecutive integers is at most \(L\) 
(in a way we can think of the orbit as a ``path'') .  It is now easy to see that  \(\Gamma\) admits a 
translation-like \(\mathbb{Z}\) action with defect \(L\) if and only if
\(\Gamma\) can be decomposed as the disjoint union of (possibly infinitely many) 
subsets (``paths'') each admitting an injective map from \(\mathbb{Z}\) satisfying this condition.
\end{rmk}

\begin{prop}\label{translationLikeActionThm}
Let \(\Gamma\) be a connected infinite graph and \(M\) a positive integer so that the degree of 
each vertex of \(\Gamma\) is at most \(M\).  Then \(\Gamma\) admits a translation-like 
\(\mathbb{Z}\) action with defect at most \(2M+1\).
\end{prop}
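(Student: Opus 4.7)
The plan is to apply Remark~\ref{RemarkAboutPaths}: I will decompose $V(\Gamma)$ into disjoint ``paths'' (injective copies of $\Zz$) with consecutive vertices at $\Gamma$-distance at most $2M+1$. Introduce the auxiliary graph $\Lambda$ on $V(\Gamma)$ whose edges connect pairs at $\Gamma$-distance between $1$ and $2M+1$; because $\Gamma$ has degree at most $M$, so does $\Lambda$ up to a constant depending only on $M$, and $\Lambda$ is connected as it contains $\Gamma$. The required data is the same thing as a fixed-point-free bijection $f : V(\Gamma) \to V(\Gamma)$ satisfying $\{v, f(v)\} \in E(\Lambda)$ for every $v$ and all of whose orbits are infinite.

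First I would use the infinite form of the Hall Marriage Theorem (valid for bipartite graphs of bounded degree when the finite Hall condition holds) applied to the bipartite double cover of $\Lambda$ to produce \emph{some} fixed-point-free bijection $f_0$ with $\{v, f_0(v)\} \in E(\Lambda)$ for all $v$. The finite Hall condition reduces to showing that for any finite $S \subset V(\Gamma)$, the set of vertices at $\Gamma$-distance between $1$ and $2M+1$ from $S$ has cardinality at least $|S|$; this follows routinely from the infiniteness, connectedness, and bounded-degree property of $\Gamma$, after passing to a suitably ``inflated'' version of $S$ if necessary.

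The orbits of $f_0$ partition $V(\Gamma)$ into bi-infinite paths and finite cycles, and it remains to eliminate the latter. Given a finite cycle $C$, connectedness of $\Lambda$ provides a $\Lambda$-edge $\{c,v\}$ with $c \in C$ and $v \notin C$; the swap defined by $f(c) := f_0(v)$ and $f(v) := f_0(c)$ (with $f = f_0$ elsewhere) merges $C$ into the $f_0$-orbit of $v$ and preserves the defect bound, destroying one finite cycle whenever $v$ lies in a bi-infinite orbit. Since $V(\Gamma)$ is countable, so is the collection of finite cycles of $f_0$; enumerating them, I would carry out an iterative sequence of such swaps along pairwise disjoint pairs $\{c,v\}$, so that each vertex is modified at most once. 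The pointwise limit is then a bijection $f$ of the required form.

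The chief obstacle is arranging these swaps so that every finite cycle is ultimately merged into a bi-infinite orbit rather than into another finite cycle. When $f_0$ possesses at least one bi-infinite orbit $O$, each enumerated finite cycle can be merged directly into $O$ using disjoint $\Lambda$-edges, the necessary disjointness being available since $\Lambda$ has bounded degree and $O$ is infinite. In the degenerate case that every orbit of $f_0$ is finite, one first fabricates a bi-infinite orbit by chaining together a $\Lambda$-ray's worth of finite cycles via swaps, which is possible because $\Lambda$ is infinite, connected, and of bounded degree.
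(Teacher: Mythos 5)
Your approach --- produce a fixed-point-free bijection via the Hall/K\"onig machinery on the bipartite double cover of the auxiliary graph $\Lambda$, then destroy finite cycles by local swaps --- is genuinely different from the paper's, which applies Zorn's lemma to a poset of partial translation-like actions and reduces the base case to Seward's theorem for graphs with at most two ends. Unfortunately the swap step does \emph{not} preserve the defect: from $f(c):=f_0(v)$ along a $\Lambda$-edge $\{c,v\}$ you only get $\dist(c,f(c))\le\dist(c,v)+\dist(v,f_0(v))\le 2(2M+1)$, so the defect can double to $4M+2$, and the claimed bound of $2M+1$ is not achieved. (This alone would not doom the surrounding program, which only needs some bounded defect, but it does invalidate the stated proposition and needs to be flagged explicitly.)

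There are also real gaps in the cycle-elimination scheme. Connectedness of $\Lambda$ provides a $\Lambda$-edge from a finite cycle $C$ to its complement, but not to a given bi-infinite orbit $O$: $C$ may be $\Lambda$-adjacent only to other finite cycles, so the assertion that each finite cycle can be ``merged directly into $O$'' is unjustified, and the required pairwise-disjointness of the swap pairs is not established once one must repeatedly merge into the evolving infinite orbit; the all-finite-cycles case is only gestured at. The Hall verification also cannot be settled ``after passing to a suitably inflated version of $S$'' --- Hall's condition is a universal statement over all finite $S$, and checking it for an enlarged set says nothing about $S$ itself. (The needed inequality $\#N_\Lambda(S)\ge\#S$ is in fact true, but requires an argument: for the set $T$ of $s\in S$ having no other $S$-vertex within $\Gamma$-distance $2M+1$, one has $S\setminus T\subset N_\Lambda(S)$, and since distinct members of $T$ are more than $2M+1\ge 3$ apart, assigning to each $t\in T$ a $\Gamma$-neighbor, necessarily outside $S$, gives an injection $T\hookrightarrow N_\Lambda(S)\setminus S$.)
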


\begin{proof}
We apply   Zorn's Lemma.
To that end, we define a partially ordered set \(\mathcal{Z}(\Gamma)\)
(or simply \(\mathcal{Z}\), when no confusion can arise)
whose elements are pair \((X,f)\) where here
	\begin{itemize}
	\item \(X \subset V(\Gamma)\) 
	\item The graph spanned by 
	\(V(\Gamma) \setminus V(X)\) has no finite components
	\item \(f\) is a translation-like \(\mathbb{Z}\) action with defect at most \(2M+1\)
	on the graph spanned by \(X\) 
	\end{itemize}
We say that 
\((X_{1},f_{1}) \leq (X_{2},f_{2})\) if and only if:
	\begin{itemize}
	\item \(X_{1} \subset X_{2}\)
	\item \(f_{1} = f_{2}|_{X_{2}}\)
	\end{itemize}
Note that by definition a \(\mathbb{Z}\) action is given by a function on the vertices, 
so it the second condition makes sense: it says that
that \(f_{1}\) is the restriction of \(f_{2}\).  A simple way to visualize this is the following: 
by Remark~\ref{RemarkAboutPaths} above
\(f_{1}\) decomposes \(X_{1}\) into ``paths'', and similarly for \(f_{2}\).  The second
condition says that each path in \(X_{1}\) under \(f_{1}\) is a path in \(X_{2}\) under \(f_{2}\).

\medskip
Claims~\ref{first_claim} and~\ref{second_claim}
below establish that \(\mathcal{Z}\) fulfills the requirements of Zorn's lemma:

\medskip

\begin{clm}
\label{first_claim}
\(\mathcal{Z}(\Gamma)\) is not empty.
\end{clm}

\begin{proof}[Proof of Claim~\ref{first_claim}]
If \(\Gamma\) has only one end then Seward~\cite{seward} establishes the claim, and his result immediately extends to the case of two ends as well.

We assume then that \(\Gamma\) has more than one end, and hence admits a biinfinite 
geodesic, say \(\gamma\).  Let \(X_{0}\) be \(V(\gamma)\) (the vertices
of \(\gamma\)) together with the vertices of any \em bounded \em component of the graph spanned
by \(V(\Gamma) \setminus V(\gamma)\).  

We claim that \(\Gamma(X_{0})\), the graph spanned by \(X_{0}\), 
is infinite, connected,  and has at most two ends.  Since by construction  \(\Gamma(X_{0})\) is 
infinite and connected, the only worry is the possibility that it has more than two ends.

Let \(E_{1},E_{2},E_{3}\) be three ends of \(\Gamma(X_{0})\), that is, there is a finite 
set \(K \subset X_{0}\) so that for \(i=1,2,3\) we have that \(E_{i}\) is an 
infinite connected component of the graph spanned by \(X_{0} \setminus K\).

Clearly \(V(E_{i})\) contains infinitely many vertices of \(V(\gamma)\), for otherwise it 
would consist of a finite set \(F \subset V(\gamma)\) together with bounded components, each adjacent 
to at least one vertex of \(F\) (in case \(V(E_{i}) \cap V(\gamma) = \emptyset\)
we get that \(V(E_{i})\) is contained in the vertices of one bounded component).  

The finite degree of \(V(\Gamma)\) implies that there are only finitely many
bounded components adjacent to each vertex of \(F\), and we conclude that \(V(E_{i})\) is finite,
a contradiction. 

 Thus each \(E_{i}\) contains vertices of \(V(\gamma)\) that correspond to 
arbitrarily large or arbitrarily negative integers.  By renumbering if necessary we may assume that \(E_{1}\)
and \(E_{2}\) both contain vertices that correspond to arbitrarily large or arbitrarily negative integers. 

 Since \(K\)
is finite, there is a vertex \(v_{1} \in E_{1}\) and a vertex \(v_{2} \in E_{2}\), corresponding to
integers \(n_{1}\) and \(n_{2}\) so big (or so negative) that no vertex corresponding to an integer 
between the two
is in \(K\).  

Thus the segment of \(\gamma\) connecting \(v_{1}\) and \(v_{2}\) is disjoint from \(K\)
and we conclude that \(E_{1} = E_{2}\), establishing that \(\Gamma(X_{0})\) has at most two ends.

By~\cite{seward},  \(\Gamma(X_{0})\) admits a translation-like \(\mathbb{Z}\) action with defect
at most \(2d+1\), say \(f_{0}\).  Thus \((X_{0},f_{0}) \in \mathcal{Z}\), and so 
\(\mathcal{Z}\) is not empty, establishing Claim~\ref{first_claim}.
\end{proof}

\begin{clm}
\label{second_claim}
Every chain in \(\mathcal{Z}\) has an upper bound.
\end{clm}

\begin{proof}[Proof of Claim~\ref{second_claim}]
Let \(\big\{(X_{\alpha},f_{\alpha})\big\}_{\alpha \in A}\) be a chain in \(\mathcal{Z}\).  Set
\(X = \bigcup_{\alpha \in A} X_{\alpha}\)
and define a  \(\mathbb{Z}\) action \(f\) by setting \(f(x) = f_{\alpha}(x)\) for 
\(x \in X_{\alpha}\).  Since \(\big\{(X_{\alpha},f_{\alpha})\big\}_{\alpha \in A}\) is 
a chain, the definition of \(\mathcal{Z}\) shows that \(f\) is well defined.
Thus \(f\) defines a \(\mathbb{Z}\) action on \(X\) and since both conditions of Definition~1
are given pointwise, it is clear that \(f\) defines a translation-like action.  

It remains to show that every component of the graph spanned by \(V(\Gamma) \setminus X\)
is unbounded.  Suppose, for a contradiction, that  there exist a bounded component
\(\Gamma'\) of the graph spanned by \(V(\Gamma) \setminus X\).
Since the degree of the vertices of \(\Gamma\) is finite, the vertices of \(\Gamma'\)
are connected to only finitely many vertices in \(X\), say \(v_{1},\dots,v_{n}\).  
Since \(X = \bigcup_{\alpha \in A} X_{\alpha}\), there exist \(\alpha_{1},\dots,\alpha_{n}\)
(not necessarily distinct) so that  \(v_{i}' \in X_{\alpha_{i}}\).  By reordering if necessary, we may assume that
\[
(X_{\alpha_{i}},f_{\alpha_{i}}) \leq
(X_{\alpha_{i+1}},f_{\alpha_{i+1}})
\]
holds for \(i=1,\dots,n-1\).  By definition of the partial order we have that
\[
X_{\alpha_{n}} = \bigcup_{i=1}^{n} X_{\alpha_{i}}
\]
This shows that \(\Gamma'\) is a component of the graph spanned by 
\(V(\Gamma) \setminus X_{\alpha_{n}}\), which is impossible because
\((X_{\alpha_{n}},f_{\alpha_{n}}) \in \mathcal{Z}\).
\end{proof}

Thus we may apply Zorn's lemma and conclude that \(\mathcal{Z}(\Gamma)\)
admits a maximal element.

\medskip
\begin{clm}
\label{third_claim}
If \((X,f) \in \mathcal{Z}\) is a maximal element that \(X = V(\Gamma)\)
\end{clm}

\begin{proof}[Proof of Claim~\ref{third_claim}]
Suppose that \(X \neq V(\Gamma)\) and let \(\Gamma'\) be a connected component of the graph
spanned by \(V(\Gamma) \setminus X\).  Then \(\Gamma'\) is infinite by definition of \(\mathcal{Z}\),
and clearly the degree of any vertex of \(\Gamma'\) is at most its degree as a vertex of  \(\Gamma\) and hence
at most \(d\).  By Claim~4 (applied to \(\mathcal{Z}(\Gamma')\)) we see that there is 
\((X',f') \in \mathcal{Z}(\Gamma')\).  It is clear that \((X \cup X',F)\) is in \(\mathcal{Z}\),
where \(F\) is defined by setting \(F(x) = f(x)\) for \(x \in X\) and
\(F(x) = f'(x)\) for \(x \in X'\).  As \((X,f) \leq (X\cup X',F)\) and \((X,f) \neq (X\cup X',F)\),
we have that \((X,f)\) is not a maximal element of \(\mathcal{Z}(\Gamma)\).
\end{proof}

This completes the proof of Theorem~\ref{translationLikeActionThm}.
\end{proof}

\section{Shortlex shellings}\label{section:shortlexShellings}

Our goal in this section is to define shortlex shellings (Definition \ref{definition:shortlexshelling}) and show that they are parameterized by an SFT (Proposition \ref{proposition:LocalXisSFT}), much in the 
style of Coornaert and Papadopoulos~\cite[\S 3,4]{CoornaertPapadopoulosBook}
or Gromov \cite[\S 7.5, 7.6, 8.4]{Gromov87}.
A shortlex shelling assigns some data to 
each element of $G$. These data impose two simultaneous, compatible structures on $G$: a decomposition into 
horospherical layers (i.e., layers which are locally modeled on spheres in $G$), and a spanning forest locally 
modelled on the tree of shortlex geodesics. 

\pparagraph{Notation.} If $a,b\in \Aa$ and $w\in\genset^*$, we write $a{\buildrel w\over\To} b$ if the shortlex machine, starting in state $a$, ends up in state $b$ after reading $w$. Given $P:G\To G$, and $S\subset G$, let $$P^{-n}S=\{g\in G:P^n(g)\in S\}$$ (as expected) and denote $$P^{-\ast}S:=
\cup_{n=0}^{\infty} P^{-n}S,$$
which we will call the {\em future cone} of $S$ with respect to $P$. 

Given a function $\sigma:G\To A$ for any set $A$, and $g\in G$, let $\sigma\cdot g$ denote the function $G\To A$ given by $(\sigma\cdot g)(h)=\sigma(gh)$. Given $S\subset G$, the 1-interior of $S$ consists of all $g\in G$ such that $B(1,g)\subset \genset$.

\begin{definition}
A preshelling is a triple $X=(h,\state,P)$, where $h:G\To\Zz$ is a 1-Lipschitz function, $\state$ is a function $G\To \Aa$, and $P:G\To G$ satisfying, for all $g\in G$, $d(g,P(g)){\leq 1}$. Given such an $X$, define $\local{X}$ to be the triple $(\deriv h, \state,\dispP)\in\mioi^\genset\times\Aa \times B(1,1_G)$, where $\dispP(g):=g^{-1}P(g)\in B(1,1_G)$.
\end{definition}

\begin{lem}The set, in $\mioi^\genset\times\Aa \times B(1,1_G)$, of $\local{X}$ such that $X$ is a preshelling is a  SFT, which we denote $\sftPre$.
\end{lem}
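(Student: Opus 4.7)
The plan is to exhibit $\sftPre$ as the intersection, over all $g\in G$, of finitely many clopen conditions applied at $g$. First I would note that the alphabet $\mioi^\genset\times\Aa\times B(1,1_G)$ is finite (since $\genset$ is finite, $\Aa$ is finite, and $B(1,1_G)$ is the ball of radius $1$), so the ambient space is a full shift on $G$ and it makes sense to speak of local rules.

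Next, I would observe that the $\state$-coordinate and the $\dispP$-coordinate place no restriction on membership in $\sftPre$: any function $G\to\Aa$ is a legitimate $\state$, and any function $\sigma:G\to B(1,1_G)$ yields a legitimate $P$ by $P(g):=g\sigma(g)$, which automatically satisfies $d(g,P(g))\le 1$. So the only substantive question is which functions $\phi:G\to\mioi^\genset$ arise as $\deriv h$ for some $1$-Lipschitz $h:G\to\Zz$.

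I would then identify two kinds of local rules on $\phi$:
\begin{itemize}
\item \emph{Antisymmetry.} For every $g\in G$ and every $s\in\genset$, $\phi(g)(s)+\phi(gs)(s^{-1})=0$.
\item \emph{Closure around relators.} Since $G$ is hyperbolic, it admits a finite presentation $\langle \genset\mid r_1,\dots,r_m\rangle$. For each relator $r_i=s_1\cdots s_k$ and each $g\in G$, the sum $\sum_{j=1}^{k}\phi(g\,s_1\cdots s_{j-1})(s_j)$ must vanish.
\end{itemize}
Both conditions involve only finitely many coordinates within a bounded radius of $g$, so each is a clopen condition that is required at every translate; this is precisely the form of a finite-type constraint.

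Finally I would verify that these local rules are equivalent to the existence of a $1$-Lipschitz $h$ with $\deriv h=\phi$. One direction is immediate: if $\phi=\deriv h$, then antisymmetry follows from telescoping and closure around any loop follows because $h$ is single-valued. For the converse, I would fix $h(1_G):=0$ and define $h(g)$ by summing $\phi$ along any word representing $g$; antisymmetry ensures invariance under backtracking and the relator condition ensures invariance under replacing a subword by a relator, so together (using that every loop in the Cayley graph is a product of conjugates of relators, equivalently simple-connectivity of the Cayley $2$-complex) $h$ is well defined. Since $\phi$ takes values in $\mioi$, $h$ is automatically $1$-Lipschitz, and by construction $\deriv h=\phi$. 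The only step that needs a touch of care is this well-definedness argument, where one must invoke finite presentability of $G$; everything else is bookkeeping. Hence $\sftPre$ is an SFT.
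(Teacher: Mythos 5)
Your proof is correct and takes essentially the same approach as the paper: identify the only nontrivial constraint as the requirement that the first coordinate be a genuine derivative, and enforce that by integration along paths together with finite presentability of $G$. The paper packages the local rules differently—it declares the allowed cylinder patterns of radius $4\delta+1$ (large enough to contain the relators of a Rips presentation) to be exactly those occurring in $\local{X}$ for genuine preshellings $X$, rather than writing out the conditions explicitly—but this encodes precisely your antisymmetry and relator-closure rules. One small point in your favor: the paper's intermediate assertion that ``if $\sigma$ integrates to $0$ around any translate of any relator then $\sigma$ is the derivative of a $1$-Lipschitz function'' is, taken literally, false without the antisymmetry hypothesis you state explicitly (consider $G=\Zz$ with the free presentation and $\sigma\equiv 1$ on both generators); the paper's argument is nevertheless sound because patterns drawn from actual preshellings automatically satisfy antisymmetry, but your version makes the needed hypothesis visible.
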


\begin{proof}
Similar results appear in \cite{CoornaertPapadopoulosBook} for derivatives of horofunctions, and more generally as \cite[Theorem 3.2]{DavidCohen17} for $k$-Lipschitz functions on finitely-presented groups.

For any $\sigma\in(\mioi^\genset)^G$, we may ``integrate'' $\sigma$ along any path $\gamma$ by summing $\sigma(\gamma(n)):\genset\to \mioi$ applied to $\gamma(n)^{-1}\gamma(n+1)$.

 If $\sigma$ integrates to $0$ around any translate of any relator in $G$ then $\sigma$ is the derivative of a $1$-Lipschitz function which can be found by integrating from the identity. 

The group $G$ (being $\delta$-hyperbolic) has a presentation with  generators $\genset$ and relators of length less than or equal to $8\delta+1$, which each fit within $B(4\delta+1,1_G)$. 

There are only finitely many distinct $\local{X}\cdot g|_{B(4\delta+1,1_G)}$, which we take as our allowed cylinder sets defining a subshift $\sftPre$ of finite type. By definition each $\local{X}$ is within $\sftPre$. Moreover if $\phi\in\sftPre$, then the first coordinate of $\phi$ integrates to $0$ around any relator and hence is the derivative of a $1$-Lipschitz function $G\to\Zz$. There are no particular restrictions on the last two coordinates in a preshelling and so $\sftPre$ is the set of all  $\local{X}$ such that $X$ is a preshelling.\end{proof}

\begin{definition}
Let $X_0=(h_0,\state_0,P_0)$, where $h_0:G\To\Zz$, $\state_0:G\To\Aa$ and $P_0:G\To G$ are given as follows.
\begin{itemize}
\item For $g\in G$, $h_0(g)=d(g,1_G)$.
\item If $w\in\genset^*$ is the shortlex minimal word representing $g\in G$, and $a_0$ is the initial state of the shortlex machine, then $\state_0(g)$ is the unique element of $\Aa$ such that $a_0{\buildrel w\over\To}\state_0(g)$ in the notation given at the start of this section.
\item Finally, $P_0(1_G)=1_G$ and for $g\neq 1_G$, $P_0(g)$ is the vertex preceding $g$ in the shortlex geodesic from $1_G$ to $g$. That is, $P_0(g)=h$ if and only if $(\state h)\stackrel{h^{-1}g}{\longrightarrow}(\state g)$.
\end{itemize}
\end{definition}

A shortlex shelling is a preshelling which is locally modelled by $X_0$ in the following sense.
\begin{definition}
\label{definition:shortlexshelling}
A preshelling $X=(h,\state,P)$ is said to be a shortlex shelling if, for every 
$g\in G$ and $R>0$ there exists $g_0\in G$ such that we have the equality of restrictions
$$(\local{X}\cdot g)|_{B(R,1_G)}=(\local{X_0}\cdot g_0)|_{B(R,1_G)},$$
and, furthermore, $B(R,g_0)$ does not contain the identity $1_G$.
\end{definition}

For a preshelling $X$, if $(\local{X}\cdot g)|_F=(\local{X_0}\cdot g_0)|_F$ for some $F\subset G$, we say that $\local{X}$ is modelled by $\local{X_0}$ on $gF$. In other words, $X$ being a shortlex shelling means that $\local{X}$ is modelled by $\local{X_0}$ on every {\it finite} subset of $G$. If $X=(h,\state,P)$ is a shortlex shelling, then $h$ is a horofunction (by definition of horofunction).

We will show that the set of $\local{X}$ such that $X$ is a shortlex shelling is formed by intersecting the preshelling 
SFT with further cylinder sets of radius $2\delta$; hence it is clear that it is a SFT.
We will now show that it is non-empty, and that it includes exactly the shortlex shellings.

\begin{prop}
\label{proposition:LocalXisSFT}
The collection of $\local{X}$ such that $X$ is a shortlex shelling forms a non-empty SFT. In particular, a preshelling $X$ will be a shortlex shelling so long as, for every $g\in G$, there exists $g_0\in G\setminus B(2\delta,1_G)$ such that
$$(\local{X}\cdot g)|_{B(2\delta,1_G)}=(\local{X_0}\cdot g_0)|_{B(2\delta,1_G)}.$$
\end{prop}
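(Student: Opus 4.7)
The plan is to prove three claims: (a) the set of local data of shortlex shellings forms an SFT, (b) it is nonempty, and (c) the ``in particular'' $2\delta$-local characterization. Claim (c) immediately implies (a), since the set of admissible $2\delta$-patterns $\{(\local{X_0}\cdot g_0)|_{B(2\delta,1_G)}:g_0\in G\setminus B(2\delta,1_G)\}$ is finite (as $\local{X_0}$ takes values in the finite set $\mioi^\genset\times\Aa\times B(1,1_G)$), so membership reduces to finitely many cylinder-set conditions at each point, intersected with the preshelling SFT $\sftPre$. So the real work lies in (b) and (c).

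For (b), I would use a compactness argument. Pick a sequence $(g_n)_{n\in\Nn}$ in $G$ with $|g_n|\to\infty$ (possible since $G$ is infinite). By compactness of the full shift on the finite alphabet, $(\local{X_0}\cdot g_n)$ has a subsequential limit $\phi$. Since $\sftPre$ is closed, $\phi=\local{X}$ for some preshelling $X$. For any $g\in G$ and $R>0$, choose $n$ large enough that both $|g_n g|>R$ and $\phi$ agrees with $\local{X_0}\cdot g_n$ on $gB(R,1_G)$; then $g_0:=g_n g$ satisfies $(\local{X}\cdot g)|_{B(R,1_G)}=(\local{X_0}\cdot g_0)|_{B(R,1_G)}$ with $B(R,g_0)\not\ni 1_G$, witnessing that $X$ is a shortlex shelling.

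The heart of the proof is (c). The forward direction is immediate by taking $R=2\delta$ in the shortlex shelling definition. For the converse, given a preshelling $X$ satisfying the $2\delta$-local condition, I would prove by induction on $R\geq 2\delta$ that for every $g\in G$ there exists $g_0\in G$ with $|g_0|>R$ and $(\local{X}\cdot g)|_{B(R,1_G)}=(\local{X_0}\cdot g_0)|_{B(R,1_G)}$. In the inductive step, given a shift $g_0$ witnessing matching on $B(R,g)$ and a boundary point $gh$ with $|h|=R+1$, the $2\delta$-condition at $gh$ furnishes a local shift $g_0''$ (so $\local{X}(gh)=\local{X_0}(g_0'')$), and the inductive hypothesis forces $\local{X_0}(g_0''k)=\local{X_0}(g_0 hk)$ on the nonempty overlap $\{k\in B(2\delta,1_G):hk\in B(R,1_G)\}$, which contains a shortlex predecessor of $h$ since $R\geq 2\delta$. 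The main obstacle is promoting this overlap agreement to the value at $k=1_G$, namely $\local{X_0}(g_0'')=\local{X_0}(g_0 h)$, which then yields $\local{X}(gh)=\local{X_0}(g_0 h)$. I would handle this using the rigidity of $\local{X_0}$: the shortlex automaton is deterministic, so the $\state$ and $\dispP$ coordinates at $gh$ are forced by those at overlap points along a geodesic to $gh$; and the derivative coordinate at $gh$ is determined by its $2\delta$-neighborhood via the finiteness of cone types in hyperbolic groups, combined with Lemma~\ref{lemma:DrivativeDeterminesUpToConstant} applied on the connected overlap region. The inequality $|g_0|>R+1$ is maintained by replacing $g_0$ with a sufficiently deep witness obtained by applying the $2\delta$-condition at an interior point.
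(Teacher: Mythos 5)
Your parts (a) and (b) are fine and match the paper: the reduction of (a) to (c) is correct, and the compactness argument for (b) is exactly the paper's existence argument. The gap is in the inductive step of (c).

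The crux of your induction is promoting agreement on the overlap $\{k\in B(2\delta,1_G):hk\in B(R,1_G)\}$ to agreement at the center, i.e.\ $\local{X_0}(g_0'')=\local{X_0}(g_0 h)$, and the justification you offer does not work. The shortlex automaton is fed \emph{forward} from $1_G$: the state $\state_0(v)$ is determined by $\state_0(P_0(v))$ together with the edge label $P_0(v)^{-1}v$. So to pin down $\state_0(g_0h)$ from the overlap you need the predecessor $P_0(g_0 h)$ (equivalently, in the $X$ picture, $P(gh)$) to lie in the overlap $B(R,g)\cap B(2\delta,gh)$. But $P$ moves one step ``down'' the horofunction $h$, toward a point at infinity that has nothing to do with $g$: $h$ arises as a limit of $\dist(\cdot,g_n)-\dist(g_n,g_0)$ with $g_n\to\infty$, not of $\dist(\cdot,g)$. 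If $gh$ already lies on the side of $B(R+1,g)$ nearer the horofunction base, then $h(P(gh))=h(gh)-1$ forces $\dist(g,P(gh))\geq R+2$, so $P(gh)\notin B(R,g)$ and the overlap misses the one vertex you need. Your remark that the overlap ``contains a shortlex predecessor of $h$'' refers to the predecessor of $h$ based at $1_G$ in the ambient group; the relevant one is the predecessor determined by $\dispP(gh)=\dispP_0(g_0'')$, and these generally differ. Moreover, even when $P(gh)$ does lie in the overlap, automaton determinism only controls the $\state$ coordinate. The coordinate $\deriv h_0(v)$ is not a function of $\state_0(v)$ alone (it depends on the position of $v$ relative to $1_G$), so ``finiteness of cone types'' does not give $\deriv h_0(g_0'')=\deriv h_0(g_0h)$, and Lemma~\ref{lemma:DrivativeDeterminesUpToConstant} cannot be invoked until you already know the derivatives agree at $gh$, which is what you are trying to prove.

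The paper avoids this directionality trap by building the agreement along future cones rather than balls. It first shows that if $\state(g)=\state_0(g_0)$ then the entire cones $P^{-*}\{g\}$ and $P_0^{-*}\{g_0\}$ agree as decorated sets (this is the direction in which the automaton is deterministic); then the $2\delta$-hypothesis yields agreement on the $1$-interior of $P^{-*}B(2\delta,g)$; and finally a slim-triangle estimate shows that any ball $B(R,g)$ sits inside the $1$-interior of $P^{-*}B(\delta,P^n g)$ once $n\geq R+\delta+1$. The last step is where the hyperbolicity constant enters. I would replace your ball-growing induction with this cone-first scheme, since the forward determinism of the automaton is exactly what makes the cone argument close.
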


\begin{proof}
Let $X=(h,\state,P)$ be a preshelling satisfying the given condition (that $\local{X}$ is modelled by $\local{X_0}$ on $2\delta$-balls not containing $1_G$). We wish to show that $X$ is actually a shortlex shelling, i.e., that on any $B(R,g)$, $\local{X}$ is modelled by $\local{X_0}$. We will proceed by two steps. First, we show that $\local{X}$ is modelled by $\local{X_0}$ on the 1-interior of sets of the form $P^{-\ast}B(2\delta,g)$. Second, we show that every ball $B(R,g)$ is contained in the 1-interior of some cone. Finally we show  the existence of  a shortlex shelling $X$. 

\pparagraph{State determines future.} Given $g\in G$,  since $\local{X}|_{B(2\delta,g)}$ is modeled on a ball away from $1_G$, 
it is clear that
$$\{(g^{-1}g',\state(g')):P(g')=g\}=\{(s,b)\in\genset\times\Aa:\state(g){\buildrel s\over\To}b\}.$$
Now, suppose that $\state(g)=\state_0(g_0)$ for some $g,g_0\in G$. We observe by induction that $g'\in P^{-\ast}\{g\}$ if and only if, for the shortlex geodesic representative  $w\in\genset^*$  of $g^{-1}g'$, 
 $\state(g){\buildrel w\over\To}\state(g')$.
  It follows that
$$g^{-1}P^{-*}\{g\}=g_0^{-1}P_0^{-*}\{g_0\}.$$
Furthermore, for $g'\in P^{-*}(g)$, we have $\state(g')=\state_0(g_0g^{-1}g')$ and $\dispP(g')=\dispP_0(g_0g^{-1}g')$, because the $\state(g)$ and $g^{-1}g'$ uniquely determine $w$ as above. Equivalently, we have shown that
$$((\state,\dispP)\cdot g)|_{g^{-1}P^{-*}\{g\}}=
((\state_0,\dispP_0)\cdot g_0)|_{g_0^{-1}P_0^{-*}\{g_0\}}.$$
Finally, for $g'\in P^{-*}g$, with $w$ as above, we have
$$h(g')-h(g)=\ell(w)=h_0(g_0g^{-1}g')-h_0(g_0),$$
or, equivalently,
$$(h\cdot g)|_{g^{-1}P^{-*}\{g\}}=
(h_0\cdot g_0)|_{g_0^{-1}P_0^{-*}\{g_0\}}+h(g)-h(g_0).$$

\pparagraph{On the 1-interior of cones, $\local{X}$ is modelled by $\local{X_0}$.} Let $g,g_0\in G$ and suppose that
$$(\local{X}\cdot g)|_{B(2\delta,1_G)}=(\local{X_0}\cdot g_0)|_{B(2\delta,1_G)}.$$
By the above considerations, we have that
$$g^{-1}P^{-*}B(2\delta,g)=g_0^{-1}P_0^{-*}B(2\delta,g_0),$$
and, furthermore,
$$((\state,\dispP)\cdot g)|_{g^{-1}P^{-*}B(2\delta,g)}=
((\state_0,\dispP_0)\cdot g_0)|_{g_0^{-1}P_0^{-*}B(2\delta,g_0)}$$
and
$$h|_{g^{-1}P^{-*}B(2\delta,g)}=
h_0|_{g_0^{-1}P_0^{-*}B(2\delta,g_0)}+h(g)-h_0(g_0).$$
Consequently, $\local{X}$ is modelled by $\local{X_0}$ on the 1-interior of $P^{-*}B(2\delta,g)$.

\pparagraph{Every ball lies in the 1-interior of some cone.}
For every $R>0$,  $g\in G$, $n\geq R+\delta+1$, and $g'\in P_0^{-n}(g)$, we claim that
$$P_0^{-*}B(\delta,g)\supset B(R,g').$$

To see this, for any $x\in B(R,g')$, consider the geodesics along $\{P_0^i(g')\}$ and $\{P_0^i(x)\}$ from 
$g'$ and $x$ to $1_G$. Since $g$ is in $\{P_0^i(g')\}$ and
$\dist(g,g')=n\geq R+\delta+1$ and $\dist(x,g')\leq \delta$, by the triangle inequality, every point on any 
geodesic between $x$ and $g'$ must be of distance greater than $\delta$ from $g$. By the $\delta$-slim 
triangle condition,  some point on the geodesic from $x$ to $1_G$ is within $\delta$ of $g$, and so $x$ is in 
$P_0^{-*}B(\delta,g)$.

It follows that for all $g\in G$, and  $n\geq R+\delta+1$,
$$B(R,g)\subset P^{-*}B(\delta,P^n g).$$

\pparagraph{$X$ is a shortlex shelling.} If $n\geq R+\delta+2$, we see from the above that 
$\local{X}|_{B(R,g)}$ is modelled by $\local{X_0}$. It follows that $X$ is a shortlex shelling.

\pparagraph{There exists a shortlex shelling.}
Let $\{g_n\}$ be a sequence in $G$ with $\dist(g_n,1_G)=n$. By compactness 
$\{(\local{X_0}\cdot g_n) |_{B(n,1_G)}\}$ has a  subsequence that converges to a shortlex shelling. 
\end{proof}

\begin{cor}
\label{corollary:predecessorpaths}
From the proof, we see that for any $g,g'\in G$, the geodesics $\gamma:n\mapsto P^n(g)$ and $\gamma':n\mapsto P^n(g')$ satisfy $\liminf\dist(\gamma(n),\gap(n))\leq 2\delta$, which implies that they are asymptotic by Lemma \ref{lemma:thinquads}.
\end{cor}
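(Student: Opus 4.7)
The plan is to extract the corollary from a key geometric fact established near the end of the proof of Proposition~\ref{proposition:LocalXisSFT}: for any $g \in G$, any $R > 0$, and any $n \geq R + \delta + 1$, one has the containment $B(R, g) \subset P^{-\ast} B(\delta, P^n g)$. Intuitively, this packages the slim-triangle observation that $P$-iterates of nearby points in $G$ must eventually pass close to each other, regardless of which shortlex shelling we started with.

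First I would apply this containment with $R := \dist(g, g')$, so that $g' \in B(R, g)$. For every integer $n \geq \dist(g, g') + \delta + 1$ this produces an integer $k = k(n)$ satisfying $\dist(P^k(g'), P^n(g)) \leq \delta$. The value of $k$ is essentially pinned down by the horofunction: since $h(P^k g') = h(g') - k$ and $h(P^n g) = h(g) - n$, and since $h$ is $1$-Lipschitz, we must have $|k - (n + \Delta)| \leq \delta$ where $\Delta := h(g') - h(g)$. Because $P$-iterates trace a geodesic (the ``state determines future'' step of the proof gives $\dist(P^a g', P^b g') = |a-b|$), a triangle inequality yields
\[
\dist(\gap(n+\Delta),\, \gamma(n)) \;\leq\; 2\delta \qquad \text{for all sufficiently large } n.
\]

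To finish, one more triangle inequality gives $\dist(\gamma(n), \gap(n)) \leq 2\delta + |\Delta|$, hence $\liminf_n \dist(\gamma(n), \gap(n)) < \infty$ (and equal to $\leq 2\delta$ in the horolevel-matched case $h(g) = h(g')$, which is the literal statement of the corollary). Then Lemma~\ref{lemma:thinquads}, applied to the pair $\gamma|_{[0..N]}$ and $\gap|_{[0..N]}$ for arbitrarily large $N$ with $k_0 = \dist(g, g')$ and $k_N$ bounded uniformly in $N$, yields $\dist(\gamma(t), \gap(t)) \leq 3\max\{k_0, k_N\} + 4\delta$ on $[0..N]$; letting $N \to \infty$ shows $\gamma$ and $\gap$ are asymptotic. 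The main obstacle is precisely this horolevel shift $\Delta$: the ``same-index'' formulation in the corollary is really a shifted statement in disguise, and the extra slack of at most $|\Delta| \leq \dist(g,g')$ is harmless for the asymptotic conclusion that Lemma~\ref{lemma:thinquads} is being invoked to deliver.
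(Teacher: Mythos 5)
Your reconstruction is correct and is exactly what the paper intends by ``From the proof, we see that\ldots'': the containment $B(R,g)\subset P^{-\ast}B(\delta,P^n g)$ established in the ``Every ball lies in the 1-interior of some cone'' step of Proposition~\ref{proposition:LocalXisSFT}, combined with the fact that $h$ decreases by exactly one under $P$ (so the exponent $m$ with $\dist(P^m g',P^n g)\leq\delta$ is pinned to within $\delta$ of $n+\Delta$, $\Delta:=h(g')-h(g)$, and $n\mapsto P^n(g')$ is a geodesic), gives $\dist\bigl(\gamma(n),\gamma'(n+\Delta)\bigr)\leq 2\delta$ for all large $n$, after which Lemma~\ref{lemma:thinquads} yields the asymptotic conclusion. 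You are also right to flag that the literal bound $\liminf\dist(\gamma(n),\gamma'(n))\leq 2\delta$ holds as written only when $h(g)=h(g')$ and in general carries the extra $|\Delta|$ slack you identify, but this is harmless: the corollary is invoked only for its asymptotic conclusion (e.g.\ in the proof of Proposition~\ref{proposition:dense}), which your finite $\liminf$ bound plus Lemma~\ref{lemma:thinquads} delivers.
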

 
We now give a name to the SFT formed by local data of shortlex shellings.

\begin{definition}
Let $\sftS$ denote the set of all $\local(X)$ such that $X$ is a shortlex shelling.
\end{definition}

We note that this SFT always has configurations with infinite order periods. The rest of the paper revolves around ``populated shellings'', which are shortlex shellings decorated with some extra data that kills these periods.

\section{The measure $\mu$}\label{Section:InvariantMeasure}

In this section we prove Proposition \ref{proposition:dense}, which shows that there is a function $\mu:\Aa\To[0,\infty)$ such that for any shortlex shelling $X$, $\mu\circ\state$ is positive on a dense (in the sense of Definition \ref{definition:dense}) set of points, and the sum of $\mu\circ\state$ over the successors of $g\in G$ is equal to $\lambda\mu(\state(g))$. This regularizes the growth of $P^{-1}$---in particular, for a finite $S\subset G$, we see that although $P^{-1}(S)$ may not have cardinality equal to $\lambda\# S$, we still have that $\mu$ assigns exactly $\lambda$ times as much mass to $P^{-1}(S)$ as it does to $S$. This, in turn will be crucial in showing that populated shellings defined in \S\ref{Section:PopulatedShellings} exist and have no infinite order periods.

Recall that Proposition~\ref{proposition:eigenvector} gives a left eigenvector of $[\Mm]$ with eigenvalue 
$\lambda$ (where \(\lambda\) is the growth rate of \(G\) with the generators \(\genset\)), 
supported on states of \em maximal growth, \em that is, the states denoted by
\(\Aabig \cup \Aamax\) in Proposition~\ref{proposition:eigenvector}.

\begin{definition}
Let $\mu:\Aa\To[0,\infty)$ be the function given $\mu(a_i)=\mu_i$, where $\mu$ is the left eigenvector defined in Proposition \ref{proposition:eigenvector}, normalized so that the smallest nonzero value of $\mu$ is $1$. Given a fixed shortlex shelling $X=(h,\state,P)$ and $g\in G$, $\mu(g)$ is understood to be $\mu(\state(g))$
 \end{definition}

\begin{rmk}\label{remark:sum} Consequently, from the definitions of shortlex shelling and $\mu$: 
$$\sum_{b:P(b)=a} \mu(b) =\lambda\; \mu(a)$$ This is the key property of $\mu$ which will be exploited in the proof of Proposition~\ref{prop:Balancing}, the existence of ``populated shellings''.  \end{rmk}

\begin{definition}
Let $G^+$ to consist of all $g\in G$ with $\mu(g)>0$.  For any horosphere $H$, let $H^+ := H\cap G^+$.
 \end{definition}

\begin{definition}[\(k\)-dense]\label{definition:dense}
Let \(G\) be a metric space and $G'\subseteq G$. We say that $G'$ is $k$-dense in $G$ if for all $g\in G$ there exists $g'\in G'$ such that
\(\dist(g,g') \leq k\). 
\end{definition}

\begin{prop}
\label{proposition:dense}
For any shortlex shelling $X$, the set $G^+$ is $2\delta$-dense.
\end{prop}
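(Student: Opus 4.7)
The plan is to reduce to the model shortlex shelling \(X_{0}\) and then obtain a contradiction if no nearby element is in \(G^{+}\).

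\emph{Reduction.} By Definition~\ref{definition:shortlexshelling}, for any shortlex shelling \(X\) and any \(g\in G\) there is \(g_{0}\in G\) with \(1_{G}\notin B(2\delta,g_{0})\) and \(\local X\cdot g=\local{X_{0}}\cdot g_{0}\) on \(B(2\delta,1_{G})\). This identifies \(\state\) on \(B(2\delta,g)\) under \(X\) with \(\state_{0}\) on \(B(2\delta,g_{0})\) under \(X_{0}\) via \(gs\leftrightarrow g_{0}s\); since \(\mu\) depends only on the state, it suffices to show that in \(X_{0}\) every ball \(B(2\delta,g_{0})\) with \(|g_{0}|>2\delta\) meets \(G^{+}\).

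\emph{Slim-triangle setup.} Fix such a \(g_{0}\) and suppose for contradiction that every element of \(B(2\delta,g_{0})\) has \(\state_{0}\in\Aamin\). For any \(h\in G\) with \(|h|=|g_{0}|+n\) and \(d(g_{0},h)=n\) (so \(g_{0}\) lies on some geodesic from \(1_{G}\) to \(h\)), slim triangles applied to the triangle with vertices \(1_{G},g_{0},h\) yield \(d(P_{0}^{n}(h),g_{0})\leq 2\delta\); hence every such \(h\) lies in \(P_{0}^{-n}B(2\delta,g_{0})\). Since \(\Aamin\) is absorbing under \(P_{0}^{-1}\) (its defining property that no path from \(\Aamin\) reaches a big component of \(\Mm\)), the spectral analysis underlying Proposition~\ref{proposition:eigenvector} gives a bound \(|P_{0}^{-n}\{g'\}|=O(\rho^{n})\) for each \(g'\in\Aamin\) and some \(\rho<\lambda\); summing,
\[
\#\{h:|h|=|g_{0}|+n,\ d(g_{0},h)=n\}\ \leq\ \bigl|P_{0}^{-n}B(2\delta,g_{0})\bigr|\ =\ O(\rho^{n}).
\]

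\emph{The contradiction.} \textbf{The main obstacle} is matching this upper bound against a lower bound
\[
\#\{h:|h|=|g_{0}|+n,\ d(g_{0},h)=n\}\ \geq\ c\lambda^{n}
\]
for some constant \(c=c(g_{0})>0\). This is a shadow-lemma-style statement asserting that in a one-ended hyperbolic group the ``forward shadow'' of any \(g_{0}\) has full exponential growth \(\lambda\). It follows from Coornaert's uniform estimate \(\#\{h:d(g_{0},h)=n\}=\Theta(\lambda^{n})\) together with the observation that those elements of this sphere with \(|h|<|g_{0}|+n\) must, by slim triangles, project back into a bounded neighborhood of the \(|g_{0}|\)-ball around \(1_{G}\) and hence form a subexponential contribution when \(n\gg|g_{0}|\). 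The upper and lower bounds together force \(\lambda^{n}=O(\rho^{n})\), contradicting \(\rho<\lambda\); so some \(g'\in B(2\delta,g_{0})\) must have \(\state_{0}(g')\in\Aamax\cup\Aabig\), i.e., \(g'\in G^{+}\cap B(2\delta,g_{0})\).
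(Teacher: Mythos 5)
Your reduction to the model shelling $X_0$ is clean and correct, and your upper bound (that if all states on $B(2\delta,g_0)$ lie in $\Aamin$ then $\#P_0^{-n}B(2\delta,g_0)=O(\rho^n)$ with $\rho<\lambda$) is fine, since $\Aamin$ is closed under successor and contains no big component. Your slim-triangle containment $\{h:|h|=|g_0|+n,\ d(g_0,h)=n\}\subset P_0^{-n}B(2\delta,g_0)$ is also correct (for $n>2\delta$, which is all you need). This is a genuinely different route from the paper's: the paper constructs a factor map $\pi:\sftS\to\partial G$, uses minimality of the boundary action to show every shortlex shelling contains a state of maximal growth, upgrades this to $k$-density for some $k$ by a compactness/subconvergence argument, and only then uses the cone geometry to sharpen $k$-density to $2\delta$-density. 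The paper's route never needs to estimate how many geodesics from $1_G$ pass through a given ball; it leans entirely on soft topological dynamics.

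The gap is in your lower bound, and it is real, not cosmetic. You need $\#\{h:|h|=|g_0|+n,\ d(g_0,h)=n\}\geq c\lambda^n$ uniformly in $g_0$, i.e.\ a shadow lemma. Your sketch of this, ``those $h$ on the sphere of radius $n$ around $g_0$ with $|h|<|g_0|+n$ form a subexponential contribution,'' is false. Take, for example, the roughly $\Theta(\lambda^{n-|g_0|})$ points $h$ with $|h|=n-|g_0|$ and $1_G$ lying on a geodesic from $g_0$ to $h$: these satisfy $d(g_0,h)=n$ and $|h|<|g_0|+n$, and since $|g_0|$ is fixed this is already $\Theta(\lambda^n)$, not subexponential. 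In fact the ``generic'' point $h$ on the sphere $d(g_0,h)=n$ has $|h|$ strictly smaller than $|g_0|+n$ — your exceptional set is the overwhelming majority of the sphere. So you cannot obtain the lower bound by subtracting a negligible error from Coornaert's $\Theta(\lambda^n)$; you would need to show that the complementary set has an exponential coefficient strictly smaller than Coornaert's lower constant, which is exactly the content of the shadow lemma for Patterson--Sullivan measures. That result is true (and due to Coornaert), but it is a substantial theorem, not a consequence of the naive subtraction you describe, and it needs uniformity in $g_0$. If you intend to use it, you should cite it directly rather than reconstruct it via slim triangles. Alternatively, note that the paper avoids the shadow lemma entirely by first proving $k$-density via compactness and then using the cone argument already available from Proposition~\ref{proposition:LocalXisSFT} to push $k$-density down to $2\delta$-density; that two-step approach is cheaper than what you are attempting.
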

\begin{proof}
In the proof of this proposition, in order
to be consistent with the \em left \em action of \(G\) on \(\partial G\), we will consider the
\em left \em action of \(G\) on \(\sftS\) given by
\[
(g \cdot \omega)(g') = \omega (g^{-1} g')
\]

We will proceed as follows. First we describe a factor map $\pi:\sftS\To\partial G$. We will use this map, together with the fact the $\partial G$ is minimal as a $G$-system, to show that every shortlex shelling includes states from $\Aamax\cup\Aabig$. We then use a compactness argument to show that there exists a $k$ such that $\Aamax\cup\Aabig$ states are $k$-dense in every shelling. Finally we will use the fact that the future of any $2\delta$-ball contains a $k$-ball to conclude that such states are $2\delta$-dense.

\pparagraph{Coding the boundary.} Given a shortlex shelling \(X=(h,\state,P)\), consider $\local X\in\sftS$.
The function $\gamma_X:n\mapsto P^n(1_G)$ satisfies \(h \circ \gamma_{X}(n) = -n\)
and therefore defines a geodesic ray.  This defines a map \(\pi:\sftS \to \partial G\) 
\[
\pi:\local X \mapsto [\gamma_{X}]
\]
We claim that \(\pi\) is a factor map, that is, \(\pi\) is continuous, equivariant, 
and surjective.

Continuity follows directly from the definitions.  To see that \(\pi\) is equivariant, 
fix \(g \in G\) and let \(g \cdot X := (h',\state',P')\), so that $\dispP'(g')=\dispP(g^{-1}g')$. We have that $\gamma_{g\cdot X}(n)=gP^n(g^{-1})$ because a simple induction shows that
$$P'^{n}(1_G)=gP^{n}(g^{-1}),$$
since $P'^0(1_G)=gP^0(g^{-1})$ and the inductive hypothesis $P'^{n}(1_G)=gP^{n}(g^{-1})$ implies $$ 
P'^{n+1}(1_G)=P'^{n}(1_G)\dispP'(P'^{n}(1_G))=gP^{n}(g^{-1})\dispP'(P'^{n}(1_G))$$ $$
 =gP^{n}(g^{-1})\dispP(P^{n}(g^{-1}))
 =gP^{n+1}(g^{-1}).$$
By Corollary \ref{corollary:predecessorpaths}, we know that $\gamma:n\mapsto P^n(g^{-1})$ is asymptotic to $\gamma_X$, and thus
$$\pi(g\cdot X)=[\gamma_{g\cdot X}]=[g\cdot \gamma]=g\cdot\pi(X),$$ showing that $\pi$ is $G$-equivariant. Finally, by \cite{Gromov87}, the action of $G$ on its boundary is minimal, so the image of $\pi$ must be all of $\partial G$, since it is a closed, nonempty subset of $\partial G$ preserved by $G$.

\pparagraph{Every shortlex shelling includes a state of maximal growth.} Let $\sftS'$ consist of all $\local X$ such that $X=(h,\state,P)$ is a shortlex shelling with $\state(G)\subseteq\Aamin.$ We wish to show that $\sftS'$ is empty, so suppose otherwise. By minimality of $\partial G$ and the fact that $\pi$ is a factor map, we see that every point of $\partial G$ may be represented by an element $\pi(\sftS')$.

Since $\state_0$ realizes values in $\Aabig$ at infinitely many points, by compactness, there exists a shortlex shelling $X=(h,\state,P)$ such that $\state(1_G)\in\Aamax\cup\Aabig$. Let $X'=(h',\state',P')$ be a shortlex shelling such that $\local X'\in \sftS'$ and $\pi(\local X)=\pi(\local X')$. For $g\in P^{-n}(1_G)$, we may form asymptotic geodesics $\gamma,\gamma'$ based at $g$ via $\gamma(n)=P^n(g)$ and $\gamma'(n)=P'^n(g)$ and apply Lemma \ref{lemma:asymptotic} to see that $P'^n(g)$ is within $2\delta$ of $1_G$. Hence, $\#P'^{-n}B(2\delta,1_G)\geq \#P^{-n}\{1_G\}.$ Since $\state(1_G)\in\Aamax\cup\Aabig$, we know
$$\log(\#P^{-n}\{1_G\})/n\To \log(\lambda),$$
but since $\state'(B(2\delta,1_G))\subset\Aamin$, we have
$$\limsup\log(\#P'^{-n}B(2\delta,1_G))/n<\log(\lambda),$$
giving us a contradiction. We conclude that $\sftS'$ must be empty.

\pparagraph{Maximal growth states are $k$-dense for some $k$.} Finally, suppose there is no $k$ such that states of $\Aamax\cup\Aabig$ occur $k$-densely in every $\local X\in \Omega$. Then there exist shortlex shellings $X_k=(h_k,\state_k,P_k)$ and $g_k\in G$ such that $\state_k(B(k,g_k))\subset \Aamin$. Then $g_k^{-1}\cdot\local X_k$ subconverges to some $\local X\in \sftS$, but we must have $\local X\in\sftS'$, which we have seen is impossible.

\pparagraph{Maximal growth states are $2\delta$-dense.} Suppose that $\state(B(2\delta,g))\subseteq\Aamin$. We have seen in the proof of Proposition \ref{proposition:LocalXisSFT} that there exists some $g'\in P^{-*}(g)$ such that $B(k,g')\subseteq P^{-*}B(2\delta,g)$. Since $\Aamin$ states, by definition, can only lead to $\Aamin$ states, we have $\state(B(k,g'))\subseteq\Aamin$. Because $G^+$ is $k$-dense, we know that this cannot be the case, so we conclude that $G^+$ is in fact $2\delta$-dense.
\end{proof}

\pparagraph{Finding dense states.} We remark that, for any subshift $\Omega\subset A^G$ on a finitely generated group, there exists $B\subset A$ and $k\in \Nn$ such that $\Omega\cap B^G$ is nonempty and for all $b\in B$ and $\omega\in \Omega\cap B^G$, the $\omega^{-1}(b)$. To see this, simply take a minimal $B$ such that $\Omega\cap B^G$ is nonempty---if, for all $k$, there were an $\omega_k\in\Omega\cap B^{G}$ such that the symbol $b\in B$ did not occur in $\omega_k|_{B(k,g_k)}$, then $\omega_k\cdot g_k$ would subconverge to a configuration in $(B\setminus b)^G \cap \Omega$. Furthermore, if $\Omega$ is an SFT, so is $\Omega\cap B^G$. This construction may be used instead to enforce density.

\section{The Divergence Graph on Horospheres}\label{section:divergenceMetric}
For horospheres $H$ of a shortlex shelling $X=(h,\state,P)$, we now construct a graph with vertices $H^+:=H\cap G^+$ which behaves nicely with respect to $P$ in the sense that predecessors of neighboring vertices either coincide or are neighbors; and each pair of adjacent vertices admits a pair of adjacent successors. In other words each edge has a predecessor (in the previous horosphere) that is an edge or a vertex, and each edge has at least one successor edge (in the next horosphere). We call this graph the divergence graph on $H$ and show that its vertex set is dense in $H$ (Lemma \ref{Lemma:H+is2deltadense}), that its edges have bounded length in the word metric (Lemma \ref{LemmaEdgesOfLength2Delta}) and that it is connected (Lemma \ref{LemmaDivGraphConnected}). When we define populated shellings in the next section, we will require that a child of a person living at $v\in H^+$ must live in a village $u\in G$ whose predecessor $P(u)\in H^+$ lies close to $v$ in the divergence graph on $H$. All of the facts noted here will be needed.

 \begin{lem}\label{Lemma:H+is2deltadense} For any horosphere $H$ in a shortlex shelling $X$, $H^+$ is $4\delta$-dense in $H$.
 \end{lem}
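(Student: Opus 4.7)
The plan is to combine Proposition~\ref{proposition:dense} (which gives $2\delta$-density of $G^+$ in all of $G$) with a short $h$-adjustment argument, paying at most $2\delta$ extra to land inside the specific horosphere $H$. Fix $g \in H$ with $h(g) = n$. By the proposition, choose $g' \in G^+$ with $\dist(g, g') \leq 2\delta$; since $h$ is $1$-Lipschitz, $k := |h(g') - n| \leq 2\delta$. I would produce $g'' \in H^+$ with $\dist(g', g'') \leq k$ and conclude $\dist(g, g'') \leq 4\delta$ by the triangle inequality.

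The adjustment splits into two cases depending on the sign of $h(g') - n$. If $h(g') \geq n$, set $g'' := P^k(g')$. Because shortlex shellings are locally modeled on $X_0$ we have $h \circ P = h - 1$, so $h(g'') = n$. The crucial observation is that $G^+$ is closed under $P$: by definition $\Aamin$ states can only transition to $\Aamin$ states in the shortlex automaton, so contrapositively $\state(g) \in \Aamax \cup \Aabig$ forces $\state(P(g)) \in \Aamax \cup \Aabig$; hence $P(G^+) \subseteq G^+$. Thus $g'' \in G^+ \cap H = H^+$, and $\dist(g', g'') \leq k$ since $g', P(g'), \ldots, P^k(g')$ is a path of length $k$ in $G$.

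If instead $h(g') < n$, I would climb the horospherical ladder by iteratively selecting successors that stay in $G^+$. By Remark~\ref{remark:sum},
\[
\sum_{b\,:\,P(b) = g'} \mu(b) \;=\; \lambda\, \mu(g') \;>\; 0,
\]
so at least one preimage of $g'$ under $P$ lies in $G^+$. Iterating this $k$ times yields a sequence $g' = b_0, b_1, \ldots, b_k$ with $P(b_i) = b_{i-1}$ and each $b_i \in G^+$; in particular $h(b_k) = h(g') + k = n$, so $g'' := b_k$ is in $H^+$ with $\dist(g', g'') \leq k$.

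I do not expect any serious obstacle here: the argument is essentially a bookkeeping consequence of two closure facts for $G^+$ (under $P$, and under picking at least one $P$-preimage), both of which fall out directly from the Perron--Frobenius structure encoded by $\mu$ and the $\Aamin / \Aamax / \Aabig$ decomposition established in Proposition~\ref{proposition:eigenvector}. The only mildly delicate point is confirming that the $h$-adjustment costs exactly the $1$-Lipschitz deficit $k \leq 2\delta$ and no more, which is why the final density constant is $4\delta = 2\delta + 2\delta$ rather than something larger.
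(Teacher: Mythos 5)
Your proposal is correct and takes essentially the same approach as the paper, which also locates a point of $G^+$ within $2\delta$ of the given $v\in H$ and then pays at most another $2\delta$ by moving along the $P$-chain (backward via predecessors, or forward by choosing a $\mu$-positive preimage at each step) to land in the right horosphere. The only cosmetic difference is that you invoke Proposition~\ref{proposition:dense} explicitly, whereas the paper re-derives that density inline via the observation that the future cone of a $2\delta$-ball contains arbitrarily large balls.
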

 
 \begin{proof}
 Let $v$ be a element of $H$ and let $B$ be the $2\delta$ ball in $G$ around $v$. The future of $B$ contains arbitrarily large balls, and in particular must contain elements of $G^+$. Thus $B$ contains an element of $G^+$, say $v'$. Now $v'$ must have either a predecessor or successor $v''$ in $H^+$. We have that
 $$\dist(v,v'')=|h(v)-h(v')|\leq 2\delta$$
 and thus $\dist(v'',v)\leq 4\delta.$ 
 \end{proof}
 
 \begin{definition}\label{DefinitionDisth}
The  {\em divergence graph} on $H$ has vertices $H^+$ and has an edge between $g_1$ and $g_2$ if and only if 
there exists $C$ such that for all $n\in\Nn$, $\dist(P^{-n}\{g_1\},P^{-n}\{g_2\})<C$. 
\end{definition}

In Lemma~\ref{LemmaDivGraphConnected} below, we  show the divergence graph is connected. 
The following lemma shows that if the futures of two points in a horosphere remain bounded distance apart, then the points and their futures are within $2\delta$ of one another and that valence in a divergence graph is bounded.

\begin{lem}\label{LemmaEdgesOfLength2Delta} Let $g_1,g_2$ be in some  $H^+$. If there exists $C>0$ such that for all $n\geq 0$, $\dist(P^{-n}\{g_1\},P^{-n}\{g_2\})<C$ then for all $n\geq 0$, $\dist(P^{-n}\{g_1\},P^{-n}\{g_2\})\leq 2\delta$. In particular if $g_1$ and $g_2$ are connected by an edge in a divergence graph then $\dist(g_1,g_2)\leq 2\delta$, and so the valence of a vertex in a divergence graph is bounded by the size of $B(2\delta,1_G)$.
\end{lem}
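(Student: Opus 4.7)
The strategy is to form an inverse-predecessor geodesic quadrilateral, extract bi-infinite geodesics as Arzel\`a-Ascoli limits, and exploit that the resulting two geodesics share both boundary points of $G$.

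For each $N$, choose $u_1(N) \in P^{-N}\{g_1\}$ and $u_2(N) \in P^{-N}\{g_2\}$ realizing the minimum $\dist(P^{-N}\{g_1\}, P^{-N}\{g_2\}) < C$, and let $\alpha_i^N \colon [0..N] \to G$ be the inverse-predecessor geodesic $\alpha_i^N(k) = P^{N-k}(u_i(N)) \in P^{-k}\{g_i\}$, so that $\alpha_i^N(0) = g_i$ and $\alpha_i^N(N) = u_i(N)$. Applied to the geodesic quadrilateral with sides $\alpha_1^N$, $\alpha_2^N$, a geodesic from $g_1$ to $g_2$, and a geodesic from $u_1(N)$ to $u_2(N)$ (the latter two of length less than $C$), Lemma~\ref{lemma:thinquads} shows that for $t$ in the middle range, $\alpha_1^N(t)$ lies within $2\delta$ of some $\alpha_2^N(s)$. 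Since $h$ is $1$-Lipschitz and $h \circ \alpha_i^N(k) = h(g_i) + k$, this forces $|s - t| \leq 2\delta$.

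Using Lemma~\ref{lemma:arzela}, I would subsequence to pass to the limit, producing asymptotic geodesic rays $\alpha_1, \alpha_2 \colon \infint \to G$ with $\alpha_i(0) = g_i$. Combining these with the predecessor rays $\tau_i(k) = P^k(g_i)$, which are asymptotic by Corollary~\ref{corollary:predecessorpaths}, yields bi-infinite geodesics $\pi_i \colon \Zz \to G$ through $g_i$ satisfying $h \circ \pi_i(t) = h(g_i) + t$ and sharing both boundary points in $\partial G$. Iterated application of the slim-triangle property, on nested triangles whose apex vertices range over $\tau_i(N)$ and $\alpha_i(N)$ as $N \to \infty$, shows that $\pi_1$ and $\pi_2$ Hausdorff-fellow-travel within $2\delta$. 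Combining this with the $1$-Lipschitz constraint on $h$ (which matches $\pi_1(n)$ to a point of $\pi_2$ on the same horosphere) produces, for every $n \geq 0$, a pair $v_1 \in P^{-n}\{g_1\}$, $v_2 \in P^{-n}\{g_2\}$ with $\dist(v_1, v_2) \leq 2\delta$.

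The main obstacle is nailing down the sharp constant $2\delta$ rather than a larger multiple of $\delta$: the naive slim-quad estimate only matches $\alpha_1^N(t)$ to $\alpha_2^N(s)$ with $s \neq t$ in general, yielding a bound of order $4\delta$ once one projects to a single horosphere. Closing the gap requires simultaneously exploiting the asymptotic-in-both-directions structure of $\pi_1, \pi_2$ and the rigidity of two bi-infinite geodesics in a $\delta$-hyperbolic group with the same pair of endpoints at infinity. Once the estimate is established, the final assertion about the valence follows at once: the $n = 0$ case gives $\dist(g_1, g_2) \leq 2\delta$ for any divergence-graph edge, so every vertex has at most $\#B(2\delta, 1_G)$ neighbors.
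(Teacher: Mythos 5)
Your approach has a genuine gap, and you have in fact identified it yourself: you never close the argument down to the sharp constant $2\delta$. Let me be concrete about why the route you sketch does not naturally deliver that constant, and what the paper does instead.

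Your plan builds a geodesic \emph{quadrilateral} with $g_1,g_2$ as two near-vertices, and in the limit produces two bi-infinite geodesics $\pi_1,\pi_2$ with the same endpoints at infinity. Rigidity of such geodesics in a $\delta$-hyperbolic space does give Hausdorff distance at most $2\delta$: $\pi_1(n)$ is within $2\delta$ of some $\pi_2(m)$. But it does not give $\dist(\pi_1(n),\pi_2(n))\leq 2\delta$. Matching parameters via the $1$-Lipschitz $h$ only shows $|n-m|\leq 2\delta$, and the triangle inequality then yields $\dist(\pi_1(n),\pi_2(n))\leq 4\delta$ — exactly the weaker bound you flag. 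There is no common apex in your configuration, and it is precisely a common apex that the $2\delta$ bound requires. So the step ``simultaneously exploiting the asymptotic-in-both-directions structure and the rigidity of two bi-infinite geodesics'' does not, as stated, close the gap.

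The paper's proof avoids this entirely by using a \emph{triangle} rather than a quad, and the key idea you are missing is where its apex comes from: the definition of a shortlex shelling (or equivalently Lemma~\ref{lem:horofunctions}) guarantees that on any prescribed finite ball — in particular one containing $g_1,g_2$ together with $P^{-n}\{g_1\},P^{-n}\{g_2\}$ for $n>C+2\delta$ — there is a point $g_0\in G$ and constant $C'$ with $h(g')=\dist(g',g_0)-C'$ for all $g'$ in the ball. Then the geodesic from $g_0$ through $g_1$ to a nearest $u_1\in P^{-n}\{g_1\}$, and the geodesic from $g_0$ through $g_2$ to the corresponding $u_2$, share the apex $g_0$, and $g_1,g_2$ lie at the \emph{same} parameter $t=\dist(g_0,g_1)=\dist(g_0,g_2)$ along them (since $h(g_1)=h(g_2)$). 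With $\dist(u_1,u_2)<C$ and $T-t=n>C+2\delta$, Lemma~\ref{lemma:thintriangle} — the thin-triangle estimate for two geodesics issuing from a common point — immediately gives $\dist(g_1,g_2)\leq 2\delta$, and the same computation at parameter $t+k$ handles $P^{-k}\{g_1\},P^{-k}\{g_2\}$. No limit, no bi-infinite geodesics, and the sharp constant comes for free from the shared apex. Your quad-based route loses this factor of two exactly because $g_1\neq g_2$ sit at two distinct corners rather than coinciding at a vertex.
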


\begin{proof}
Suppose for $g_1$ and $g_2$  in $H^+$, there is some $C$ with $\dist(P^{-n}\{g_1\},P^{-n}\{g_2\})<C$ for all $n$. Take some $n>C+2\delta$. There exists some $g_0\in G$ such that $h(g')=\dist(g',g_0)-C$ for all $g'$ in a containing $g_1,g_2,P^{-n}{g_1}, P^{-n}{g_2}$. Let $\gamma_i$, $i=1,2$ be a geodesic from $g_0$ to $g_i$.
Let $t=\dist(g_0,g_1)=\dist(g_0,g_2)$.
Then by  Lemma~\ref{lemma:thintriangle}, $\dist(g_1,g_2)=\dist(\gamma_1(t),\gamma_2(t))\leq 2\delta$.\end{proof}

\begin{lem} \label{LemmaDivGraphConnected}
If $H$ is a horosphere in a shortlex shelling admitted by $\Mm$, then the divergence graph on $H^+$ is connected.
\end{lem}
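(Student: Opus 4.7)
The plan is to argue by contradiction using Swarup's theorem~\cite{swarup}: the boundary of a one-ended hyperbolic group is connected and has no global cut points, so in particular $\partial G\setminus\{\xi\}$ is connected for every $\xi\in\partial G$. I would set up a natural ``shadow'' map from $H^+$ to subsets of $\partial G\setminus\{\xi\}$, and show that if the divergence graph had more than one component, the shadows would yield a separation of $\partial G\setminus\{\xi\}$.

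By Corollary~\ref{corollary:predecessorpaths}, the predecessor rays $n\mapsto P^n(v)$ are pairwise asymptotic, so they determine a single horocenter $\xi\in\partial G$. For $g\in H^+$, define the \emph{successor shadow}
\[
\partial g \ := \ \bigl\{[\gamma]\in\partial G \ \big|\ \gamma(0)=g,\ \gamma(n)\in P^{-n}\{g\}\text{ for all }n\geq 0\bigr\}.
\]
Since $\mu(g)>0$, Remark~\ref{remark:sum} implies that the tree of $\mu$-positive $P$-descendants of $g$ is infinite, so by K\"onig's lemma and Lemma~\ref{lemma:arzela} one gets that $\partial g$ is a nonempty subset of $\partial G\setminus\{\xi\}$. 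The crucial observation is that if $\partial g_1\cap\partial g_2\neq\emptyset$ for $g_1,g_2\in H^+$, then $\{g_1,g_2\}$ is an edge of the divergence graph: choose successor rays $\gamma_i$ based at $g_i$ with $[\gamma_1]=[\gamma_2]$; Lemma~\ref{lemma:asymptotic} yields $\dist(\gamma_1(n),\gamma_2(n))\leq 3\dist(g_1,g_2)+4\delta$ for all $n$, and since $\gamma_i(n)\in P^{-n}\{g_i\}$ this bounds the set-distance appearing in Definition~\ref{DefinitionDisth}.

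Now assume for contradiction that the divergence graph has at least two components. For each component $C$ set $\partial C=\bigcup_{g\in C}\partial g$; by the crucial observation, distinct components yield disjoint shadows. I would then establish two facts. First, the shadows cover $\partial G\setminus\{\xi\}$: given $\eta\neq\xi$, a biinfinite geodesic from $\xi$ to $\eta$ has $h$ unbounded in both directions and must meet $H$, and density of $H^+$ in $H$ (Lemma~\ref{Lemma:H+is2deltadense}) together with hyperbolic fellow-traveling (Lemma~\ref{lemma:asymptotic}) promotes a crossing point to a $g\in H^+$ whose descendant tree contains a sub-ray converging to $\eta$, so $\eta\in\partial g$. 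Second, each $\partial C$ is open in $\partial G\setminus\{\xi\}$: given $\eta\in\partial C$ realized by a successor ray $\gamma$ based at $g\in C$, any nearby $\eta'$ is realized by a ray $\gamma'$ that $\delta$-fellow-travels $\gamma$ on an arbitrarily long initial segment, and the shared initial segment is used to produce $g'\in H^+$ with $\eta'\in\partial g'$ and $\partial g'\cap\partial g\neq\emptyset$, which by the crucial observation forces $g'\in C$.

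Combining the two facts, $\partial G\setminus\{\xi\}$ is partitioned into at least two nonempty open subsets, contradicting the connectedness guaranteed by Swarup's theorem. The hard part is the openness step: one must snap a ray to a perturbed boundary point onto an honest successor ray whose base is in the same divergence-graph component as $g$, rather than merely producing some $g'\in H^+$ whose component is uncontrolled. I would handle this by exploiting that asymptotic successor rays remain within $4\delta$ for all time (Lemma~\ref{lemma:asymptotic}) so that long common initial subwords of $\gamma$ and $\gamma'$ give a shared descendant of $g$ and of some $g'\in H^+$ close to $g$, forcing $\partial g\cap\partial g'\neq\emptyset$ and hence the needed edge $g\sim g'$.
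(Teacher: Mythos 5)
Your overall strategy matches the paper's: both use Swarup's theorem that $\partial G\setminus\{\xi\}$ is connected, define a ``shadow'' map $\partial g$ (the paper calls it $\proj(g)$) sending $g\in H^+$ to boundary points reached by successor rays, and observe that $\partial g_1\cap\partial g_2\neq\emptyset$ forces a divergence-graph edge between $g_1$ and $g_2$. Your surjectivity step (shadows of $H^+$ cover $\partial G\setminus\{\xi\}$) and disjointness of shadows of distinct components also mirror the paper's facts (1)--(3).

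The genuine gap is in your openness step. The paper instead proves that $\proj(S)$ and $\proj(H^+\setminus S)$ are \emph{closed}, and the crux of that argument is showing that if $\eta_n\to\eta$ with each $\eta_n$ in the shadow of $H^+$, represented by successor rays $\gamma_n$, then the set of base points $\{\gamma_n(0)\}\subset H^+$ is \emph{finite}. This nontrivial compactness fact (their item (4)) is proved via the ``dipping'' Lemma~\ref{lemma:DipSomeMore}, which shows a geodesic between two far-apart points of a horosphere must dip far below it, while a geodesic toward a fixed $\eta'$ cannot do so. Your plan to prove openness directly by fellow-traveling does not supply a substitute for this. Specifically, your final sentence tries to use a ``shared descendant of $g$ and of some $g'$'' to produce $\partial g\cap\partial g'\neq\emptyset$: but in a shortlex shelling the predecessor map $P$ is a function, so a common successor-descendant $v$ of $g$ and $g'$ on the same horosphere would satisfy $g=P^k(v)=g'$ --- forcing $g=g'$, not a nontrivial edge. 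More generally, producing a $g'\in H^+$ merely \emph{close to} $g$ in the word metric is not enough, since divergence-graph adjacency is a condition on futures remaining bounded forever, not a metric condition at time zero. To close the gap you would need the paper's finitely-many-base-points argument (or an equivalent), at which point you would effectively be re-proving closedness of the complementary shadow and deducing openness from it.
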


\begin{proof}
Let $X=(h,\state,P)$ be a shortlex shelling admitted by $\Mm$. Without loss of generality, set $H=h^{-1}(0)$, and let $\xi$ denote the point of $\partial G$ represented by the geodesic ray $n\mapsto P^n(1_G)$. A deep result of Swarup (building on work of Bowditch) asserts that $\partial G\setminus \xi$ is connected because $G$ is one-ended~\cite{swarup}. We will use this to show that the divergence graph on $H^+$ is connected. The following definitions relate these two spaces.

\begin{itemize}
\item By an $X$-geodesic, we mean any geodesic ray $\gamma$ in $G$ such that for all $n$, $h\circ\gamma(n)=n$, $\gamma(n)=P(\gamma(n+1))$, and $\gamma(n)\in G^+$.
\item If $S$ is a subset of $H^+$, let $\proj(S)$ denote the subset of $\partial G$ consisting of all $[\gamma]$ where $\gamma$ is an $X$-geodesic with $\gamma(0)\in S$. (We write $\proj(v)$ for $\proj(\{v\})$.)
\end{itemize}

Let $S$  be any  component  of the divergence graph in $H^+$. We are going to show that $\proj(S)$ and $\proj(H^+\setminus S)$ disconnect $\partial{G}\setminus\xi$ unless $H^+\setminus S$ is empty. We claim the following conditions are satisfied: 

(1) $\proj(g)\neq \emptyset$ for any $g\in H^+$. Let $g_n$ be a point in $P^{-n}(g)\cap G^+$ (which is nonempty by definition of $\mu$.) Let $\gamma_n$ be the geodesic path given by $t\mapsto P^{n-t}(g_n)$. Then $\gamma_n(0)=g$ for all $n$, and the $\gamma_n$ subconverge by Lemma \ref{lemma:arzela}. Clearly this limit is an $X$-geodesic.

(2) {$\proj(H^+)=\partial{G}\setminus\xi$:}
Comparing $h$ values tells us that no $X$-geodesic is asymptotic to $n\mapsto P^n(1_G)$, so $\xi\notin\proj(H^+)$. Let $\eta\in \partial{G}\setminus\xi$. We must show that $\eta$ is represented by an $X$-geodesic. Some biinfinite geodesic $\gamma$ connects $\eta$ and $\xi$ \cite[Lemma III.H.3.2]{BridsonHaefliger99}, which we parametrize so $h(\gamma(0))=0$.  By Lemma~\ref{Lemma:H+is2deltadense}, for any $n\in\Zz$, there is some $g_n\in G^+$ so that $\dist(g_n, \gamma(n))\leq 2\delta$. Let $\gamma_n$ be an $X$-geodesic such that $\gamma_n(h(g_n))=g_n$ (we see that these exist by (0)). By Lemma \ref{lemma:asymptotic} applied to the reverse of $\gamma$ and $\gamma_n$, $\dist(\gamma(0),\gamma_n(0))$ is bounded, so the $\gamma_n$ subconverge to some $X$-geodesic $\gamma'$ by Lemma \ref{lemma:arzela}. By \ref{lemma:independence}, $\gamma'$ is asymptotic to $\gamma$, and thus $\eta\in\proj(H^+)$.

(3)  $\proj(S)\cap\proj(H^+\setminus S) = \emptyset$:
By our definitions, any points $p, q\in H^+$ with $\proj(p)\cap\proj(q)\neq \emptyset$ share an edge in the divergence graph, and $S$ is a component.

(4) 
Let $(\gamma_n)$ be a sequence of $X$-geodesics such that   the sequence
 $(\eta_n:=[\gamma_n])$  is in $\proj(H^+)=\partial G\setminus\xi$ and converges to some $\eta=[\gamma]\in\proj(H^+)$, for some $X$-geodesic $\gamma$. Recall $\{\gamma_n(0)\} \subset H^+$; we
claim that this set is finite. 

 Assume, for a contradiction, that it is infinite and
fix some $x > 4\delta$. Since $\{\gamma_n(0)\}$ is infinite, then the set $J_x:=\{n\in\Nn: \dist(\gamma_n(0), \gamma(0)) > 2x+4\delta\}$ is infinite. 
For each $n\in\Nn$, by \cite[Lemma III.H.3.1]{BridsonHaefliger99}
 let $\gap_n$ be any geodesic with  $\gap_n(0)=\gamma(0)$ and $[\gap_n]=[\gamma_n]=\eta_n$. By Lemma~\ref{lemma:arzela} by subsequencing we may assume $\gap_n$ converges to $\gap$. By Lemma~\ref{lemma:independence} $\gap$ and $\gamma$ are asymptotic. We will obtain a contradiction because our choice of $x$ forces $\gap_n$ to dip below the horosphere $H$ while $\gap$ must fellow-travel with the $X$-geodesic $\gamma$. 
 
 \psfrag{A}[r][r]{$\gamma_n(p)$}
\psfrag{B}[r][r]{$\not{\!y}$}
\psfrag{C}[r][r]{$\gamma_n(0)$}
\psfrag{D}[l][l]{$\gamma(0)=\gap(0)$}
\psfrag{E}[l][l]{$\gap_n(x)=\gap(x)$}
\psfrag{F}[l][l]{$\gap_n(q)$}
\psfrag{G}[c][c]{$\tilga$}
\psfrag{H}[c][c]{}
\psfrag{I}[c][c]{}
\psfrag{i}[r][r]{}
\psfrag{J}[c][c]{$\hat\gamma$}
\psfrag{X}[l][l]{}
\psfrag{y}[c][c]{$y$}
\centerline{\includegraphics[scale=.7]{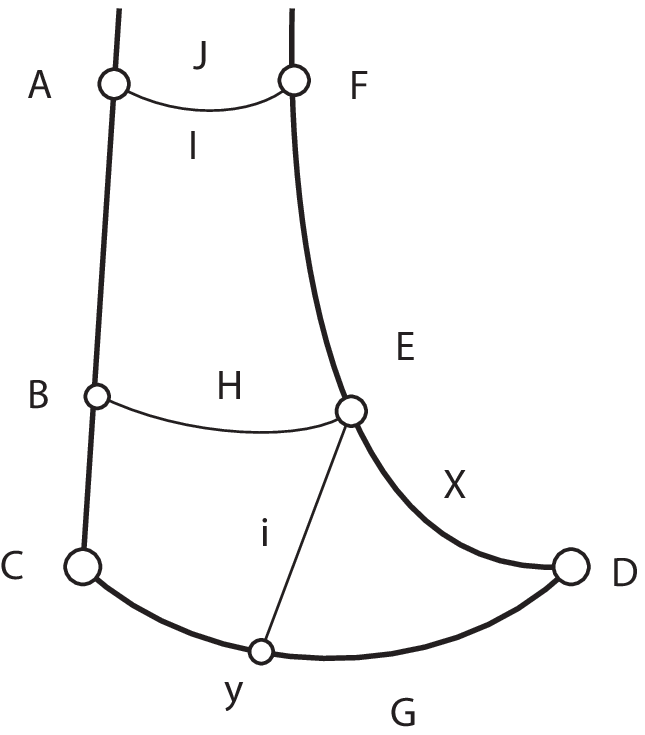}}

There exists some $n$ such that $\gap_n(x)=\gap(x)$.   Let \(\tilga\) be a geodesic connecting
\(\gamma(0)\) to \(\gamma_{n}(0)\).  By Lemma~\ref{lemma:asymptotic} for sufficiently large 
\(q\), there exists \(p\), so that \(\dist(\gamma_{n}(p),\gamma_{n}'(q)) \leq 2\delta\).
Let \(\hat\gamma\) a geodesic connecting
\(\gamma_{n}(p)\) to \(\gamma_{n}'(q)\); we take \(q > 4\delta + x\).   
By the slim quads condition, \(\gamma_{n}'(x)\) is within \(2\delta\) from some point \(y\)
on one of the other three sides,
and since \(q > 4\delta+x\) we have that \(y \not \in\hat\gamma\).
We claim that \(y \not\in \gamma_{n}\); assume, for a contradiction, that it is.
Since \(\dist(\gamma(0),y) \leq x + 2\delta\), we have that \(h(y) \leq x+2\delta\).
On the other hand, since \(\gamma_{n}\) is an \(X\)-geodesic, we have that
\[
h(y) = \dist(\gamma_{n}(0),y) \geq
\dist(\gamma_{n}(0),\gamma(0)) - \dist(\gamma(0),y) >
2x + 4\delta - (x + 2\delta) =
x + 2\delta
\] 
This contradiction shows that \(y \not \in \gamma_{n}\).

Therefore \(y \in \tilga\).  By Lemma~\ref{lemma:ladder} we have that
\(\dist(\gamma_{n}'(x),\tilga(x)) \leq 4\delta\).
By Lemma~\ref{lemma:DipSomeMore} we have that 
\(h(\tilga(x)) \leq 2\delta -x\); therefore 
\(h(\gamma_{n}'(x)) \leq 6\delta - x< 2\delta\).
On the other hand, \(\gamma_{n}'(x) = \gap(x)\).  Since
\(\dist(\gap(x),\gamma(x)) \leq 2\delta\), we have that
\[
h(\gap(x)) \geq h(\gamma(x)) - 2\delta = x - 2\delta > 2 \delta
\]
a contradiction, showing that $\{\gamma_n(0)\}$ is finite.

(5) For  $A=S$ or $A=H^+\setminus S$,  $\proj(A)$ is closed in $\partial G$:  
Given a sequence $(\eta_n)\subset\proj(A)$ converging to some $\eta\in\proj(H^+)$, we wish to show that $\eta\in \proj(A)$. Represent each $\eta_n$ with an $X$-geodesic $\gamma_n$ with $\gamma_n(0)\in A$. By (4), $\{\gamma_n(0)\}$ is finite and therefore $\gamma_n$ subconverges to some $\tilde\gamma$ with $\tilde\gamma(0)\in \{\gamma_n(0)\}\subset A$. By Lemma~\ref{lemma:independence}, $[\gamma]=[\tilde\gamma]\in \proj(A)$. 
In other words,  $S$ and $H^+\setminus S$ are closed.

As noted above, ~\cite{swarup} shows that $\partial{G}\setminus\xi$ is connected. Consequently, by  (2), (3) and (5), one of $\proj(H^+\setminus S)$ or $\proj(S)$ is empty. By (1) $\proj(S)$ is not empty and  so  $H^+\setminus S=\emptyset$. In other words, the divergence graph on $H^+$ is connected.
\end{proof}

\section{Populated Shellings}
\label{Section:PopulatedShellings}

In the remainder of the construction we consider the divergence graphs on $H^+$ for each horosphere $H$ in each shortlex shelling on $G$.
By Lemma~\ref{LemmaDivGraphConnected} the divergence graph is connected (since  $G$
is one-ended), and 
by Lemma~\ref{LemmaEdgesOfLength2Delta} the degree of the divergence graph is at
most  $B(2\delta,1_G)$.   Hence by Theorem~\ref{translationLikeActionThm}  the divergence
graph admits a translation-like $\mathbb{Z}$ action, say given by $\psi:H^+ \to H^+$, 
with defect $L$ where $L: =2\#B(2\delta,1_G)+1$ does not depend on choice of $H$ or $X$. We fix this $L$ for the remainder of the paper and note that $L>2\delta$.  This translation-like $\mathbb Z$ action $\psi$ will be central to our proof in Section~\ref{subsection:existence} of the existence of ``populated shellings'', defined below.

For the following, we define, for any $K\in\Nn$ and $R\subset H^+$, the set $\nbhd_K(R)$, $R\subset \nbhd_K(R)\subset H^+$ of points connected to $R$ by paths in the divergence graph on $H^+$ of length no greater than $K$.
We abbreviate $\nbhd_L(R)$ as $\nbhd R$. 
Since by Lemma~\ref{LemmaEdgesOfLength2Delta}, points connected by edges in a divergence graph are at most $2\delta$ apart, we observe:

\begin{lem}\label{LemmaBoundsOnNL} On any $H^+$ in any shortlex shelling, for any $K\in\Nn$, 
for any $R\subset H^+$,  $\nbhd_K(R)$ is contained within a $2\delta K$ neighborhood of $R$ 
{(in the word metric)}. In particular,  $\nbhd R$ is contained within a 
$2\delta(2\#B(2\delta,1_G)+1) $ neighborhood of $R$. 
\end{lem}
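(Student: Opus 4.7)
The plan is to prove this by a straightforward induction on $K$, leaning entirely on Lemma~\ref{LemmaEdgesOfLength2Delta}. Recall that $\nbhd_K(R)$ is defined as the set of vertices in $H^+$ reachable from $R$ by a path of length at most $K$ in the divergence graph. The base case $K=0$ is trivial since $\nbhd_0(R) = R$, which lies in the $0$-neighborhood of $R$.

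For the inductive step, suppose the claim holds for $K-1$, so that every $w \in \nbhd_{K-1}(R)$ satisfies $\dist(w,R) \leq 2\delta(K-1)$. Given $v \in \nbhd_K(R)$, either $v \in \nbhd_{K-1}(R)$ (and we are done by induction) or $v$ is adjacent in the divergence graph to some $w \in \nbhd_{K-1}(R)$. In the latter case, Lemma~\ref{LemmaEdgesOfLength2Delta} gives $\dist(v,w) \leq 2\delta$, and the triangle inequality in the word metric yields
\[
\dist(v,R) \leq \dist(v,w) + \dist(w,R) \leq 2\delta + 2\delta(K-1) = 2\delta K.
\]

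The "in particular" clause is just the specialization $K = L = 2\#B(2\delta,1_G)+1$, since $\nbhd R = \nbhd_L(R)$ by the definition fixed at the beginning of the section. I do not anticipate any genuine obstacle here; the lemma is a quantitative bookkeeping statement that converts graph-combinatorial distance in the divergence graph into word-metric distance, and the only nontrivial input (the bound $2\delta$ on edge length) has already been established.
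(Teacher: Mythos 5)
Your induction is correct and is exactly the triangle-inequality argument the paper has in mind; the paper gives no explicit proof, instead presenting the lemma as an immediate observation from Lemma~\ref{LemmaEdgesOfLength2Delta} (divergence-graph edges have word-metric length at most $2\delta$). Your write-up simply makes that observation explicit.
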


\subsection{Populated shellings}\label{subsection:populatedShellings}

Fix $q\in\{2,3\}$ such that $\log(q)\notin\Qq\log(\lambda)$. 

\begin{definition}
\label{definition:populatedshelling}
\rm
A \em populated shelling \em of $G$ 
(with \em population bound \em $N\in\Nn$ and \em growth \em by powers of $q$) 
is a shortlex shelling equipped with the following extra data:
\begin{itemize}
\item a ``population'' function $\popfunc:G\To [0..N]$; 
\item a ``population density'' function $\density:G\To\{\lfloor\log_q(\lambda)\rfloor,\lceil\log_q(\lambda)\rceil\}$ (note $q^\density$ is always in $\Nn$);
\item and a  ``parent-child matching'' function

$$\match:\{(v,j,k)\ \mid \ v\in G, 1\leq j\leq\popfunc(v), 1\leq k\leq q^{\density(v)}\}\to \{(v,j)\ \mid\ v\in G, 1\leq j\leq \popfunc(v)\}$$
\end{itemize}

such that 
\begin{itemize}
\item denoting the coordinates of  $\match=(\match_G,\matchpop)$, for any triple  $(v,j,k)$ in the domain   $\match_G(v,j,k)\in P^{-1}\nbhd (v)$; 
\item $\popfunc(g)=0\Leftrightarrow \mu(g)=0$ (and so the domain of $m$ restricted to  $v$ in $G\setminus G^+$ is empty);
\item $\density$ is constant on horospheres;
\item  and $\match$ is
 a bijection.
\end{itemize}
\end{definition}

For $H$ a level set of $h$, we refer to $\{(v,j):v\in H,j\in [1..\popfunc(v)]\}$ as the set of ``people'' in $H$. We say that person $(v,j)$ ``lives'' at a ``village'' $v$. Each $(v,j)$ has $q^{\density(v)}$ ``children''. For each 
 $k\in [1..  q^{\density(v)}]$, if
$\match(v,j,k)=(w,l)$, then we say that $(w,l)$ is the $k$th ``child'' of $(v,j)$  and conversely $(v,j)$ is the ``parent'' of $(w,l)$. Note that each person has exactly one parent and $ q^{\density(v)}$ children.

\begin{definition}
The local data associated to the populated shelling  $X=(h,\state,P,\popfunc,\density,\match)$ is the function
$$\begin{array}{rc} \local{X}: G\ \  \To &  \mioi^\genset \times \Aa \times \genset
\times [0..N]  \times \{\lfloor\log_q(\lambda)\rfloor,\lceil\log_q(\lambda)\rceil\}   \times M \end{array}$$
given by
$$\local{X}:g\mapsto (\deriv h,\state,\dispP,\popfunc,\density,\dispG
)(g)$$

where $M$ is the finite set of functions  with (possibly empty) domain within $[1..\popfunc(g)]\times [1..q^{\delta(g)}]$ and range $B(2\delta L+1, 1_G)\times [1..N]$. We define $\dispG(g)(j,k)=(g^{-1}\match_G(g,j,k),\matchpop(g,j,k))$.  
\end{definition}

\begin{rmk}  The first coordinate of $\dispG(g)$ lies within $B(2\delta L+1, 1_G)$ by Lemma~\ref{LemmaBoundsOnNL}. 
This bound will be used throughout the remainder of the construction. Moreover  $\dispG$ is the empty function for $g\notin G^+$. The first coordinate of $\dispG(g)$ gives the relative position of the village in which the $k$th child of the 
$j$th villager of $g$ lives, and the second coordinate gives which villager that child is. 
\end{rmk}

\begin{prop}\label{lemma:SFTP}
The set of all $\local{X}$ such that $X$ is a populated shelling 
 forms an SFT, $\sftP$. 
\end{prop}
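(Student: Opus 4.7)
The plan is to exhibit a finite list of local rules whose intersection with every $G$-translate cuts out exactly $\{\local{X} : X \text{ is a populated shelling}\}$. The rules I would impose are: (i) projection onto the $(\deriv h, \state, \dispP)$ coordinates lies in $\sftS$ (an SFT by Proposition~\ref{proposition:LocalXisSFT}); (ii) $\popfunc(g) \neq 0$ exactly when $\mu(\state(g)) > 0$; (iii) $\density(g) \in \{\lfloor\log_q \lambda\rfloor, \lceil\log_q \lambda\rceil\}$, with $\density(g) = \density(g')$ whenever $h(g) = h(g')$ and $\dist(g, g') \leq 4\delta$; and (iv) for every $g \in G$ and $l \in [1..\popfunc(g)]$, there exists a unique triple $(g', j, k)$ with $g' \in B(2\delta L + 1, g)$, $1 \leq j \leq \popfunc(g')$, $1 \leq k \leq q^{\density(g')}$, and $\dispG(g')(j,k) = (g'^{-1}g, l)$. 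Each constraint depends only on data in a fixed-radius ball around $g$ and so is given by a clopen set.

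The forward direction is immediate: every populated shelling satisfies (i)--(iii) by definition, and satisfies (iv) because $\match$ is a bijection whose image lies in $P^{-1}\nbhd(v) \subseteq B(2\delta L + 1, v)$ by Lemma~\ref{LemmaBoundsOnNL}. For the converse, given a function $\phi$ satisfying all the rules, I would recover a shortlex shelling from the first three coordinates via Proposition~\ref{proposition:LocalXisSFT}, and read off $\popfunc, \density, \match$ from the remaining coordinates. The substantive verifications are then that $\density$ is constant on each horosphere and that $\match$ defines a bijection whose image lies in $P^{-1}\nbhd$. For density constancy: the divergence graph on any $H^+$ is connected (Lemma~\ref{LemmaDivGraphConnected}) with edges of word-length at most $2\delta$ (Lemma~\ref{LemmaEdgesOfLength2Delta}), so rule (iii) propagates $\density$ through $H^+$ along divergence-graph edge walks; since $H^+$ is $4\delta$-dense in $H$ (Lemma~\ref{Lemma:H+is2deltadense}), rule (iii) then extends $\density$-constancy from $H^+$ to all of $H$. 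For $\match$: the uniqueness clause of (iv) gives bijectivity on the child side, and the search radius $2\delta L + 1$ exhausts every potential parent of a given child since any such parent $v$ of $w$ satisfies $v \in B(2\delta L + 1, w)$.

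The main obstacle is calibrating these radii so that features which are not a priori local, chiefly the horosphere-constancy of $\density$ and the membership of children in $P^{-1}\nbhd(v)$, become locally certifiable. Both reduce to the same observation: though divergence-graph edges are defined via asymptotic information, Lemma~\ref{LemmaEdgesOfLength2Delta} bounds their word-length, and Lemma~\ref{LemmaDivGraphConnected} (which in turn rests on Swarup's resolution of the cut-point conjecture) lets local propagation do the global work of pinning down $\density$ along each horosphere.
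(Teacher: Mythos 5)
Your approach is the same as the paper's: local rules of radius roughly $2\delta L+1$, with density-constancy enforced by the connectivity of the divergence graph (Lemma~\ref{LemmaDivGraphConnected}). However, because you enumerate an explicit rule set rather than simply taking as allowed cylinders the restrictions of actual $\local X$ to $(2\delta L+1)$-balls, you have to be exhaustive, and rules (i)--(iv) are not: nothing in them forces the constraint $\match_G(v,j,k)\in P^{-1}\nbhd(v)$ from Definition~\ref{definition:populatedshelling}. Rule~(iv) only governs child-side bijectivity, so a configuration could satisfy (i)--(iv) with $\dispG(v)(j,k)$ pointing somewhere inside $B(2\delta L+1,v)$ that is on the wrong horosphere, or on the correct horosphere but outside $\nbhd(v)$; similarly, nothing prevents $\match(v,j,k)$ from landing on a non-person $(u,l)$ with $l>\popfunc(u)$. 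You gesture at the $P^{-1}\nbhd(v)$ condition as a ``main obstacle'' in your closing paragraph, but it never enters the rule list, so the converse direction does not close.

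Secondly, the assertion that this membership condition and density-constancy ``reduce to the same observation'' via Lemma~\ref{LemmaEdgesOfLength2Delta} is not quite right: that lemma only bounds edge lengths. The reason the divergence-graph structure near $v$ is locally certifiable is the ``state determines future'' fact established inside the proof of Proposition~\ref{proposition:LocalXisSFT} --- the future cone $P^{-*}\{g\}$, and hence the edge relation on $H^+$ near $v$, is a function of $\state$ on a bounded ball. Without invoking that, there is no justification that a rule of the form ``$\match_G(v,j,k)\in P^{-1}\nbhd(v)$'' is checkable on a fixed-radius cylinder at all. Both omissions are repairable (add the missing local rules, and cite the locality of the future cone), but as written the proof has a real hole in the converse direction.
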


We will show that this SFT $\sftP$ is non-empty (for sufficiently large $N$) in  Proposition~\ref{Cor:PopShellingsExist} and that 
 the stabilizer of any $\local X\in \sftP$  contains no infinite order element (for appropriately chosen $q$) 
 in  Proposition~\ref{Prop:Aperiodic}.

\begin{proof} Recall Proposition~\ref{proposition:LocalXisSFT} that  {$\sftS$} the set of $\local X$ such that $X$ is a shortlex shelling is an SFT with cylinder sets of size { $4\delta+1$}. 
We will show that the set of $\local X$ such that $X$ is a populated shelling is an SFT by taking cylinder sets of radius $2\delta L+1>4\delta+1$, and show that these local rules are sufficient to enforce the conditions defining the functions $\popfunc, \match$ and $\density$ on a populated shelling. 

Recall that by Lemma~\ref{Lemma:H+is2deltadense} the vertices  of a divergence graph are $4\delta$-dense in its horosphere and by Lemma~\ref{LemmaEdgesOfLength2Delta} the distance between endpoints of a divergence edge is at most $2\delta$. Because the group is one-ended, by 
 Lemma~\ref{LemmaDivGraphConnected},  the divergence graph on each horosphere is connected. Consequently, in order 
to ensure that $\density$ is constant on horospheres it suffices to consider cylinder sets of size at least $4\delta$.  
The conditions on $\popfunc$ and $\match$ are defined within $(2\delta L+1)$-balls and so are ensured by cylinder sets of this size. 
\end{proof}

\subsection{The existence of populated shellings.}
\label{subsection:existence}

Recall, as discussed at the beginning of Section~\ref{Section:PopulatedShellings}, that by Theorem~\ref{translationLikeActionThm}  each divergence
graph admits a translation-like $\mathbb{Z}$ action, $\psi:H^+ \to H^+$, 
with defect  $L =2\#B(2\delta,1_G)+1$.

For each $i\in\Zz$, let $H=H_i$ be the level set $h^{-1}\{i\}$. For convenience, when clear from context we will drop the subscript $i$.

For the remainder of this section, we  fix some  shortlex shelling $(h,\state,P)$, and some  
 For any  $R\subset H^+:=H\cap G^+$, let $\partial R$ denote $\nbhd R \setminus R$, recalling that $\nbhd R:=\nbhd_L R$ is the $L$-neighborhood of $R$ in $H^+$ with distance measured in the divergence graph, $L$ as defined at the beginning of Section~\ref{Section:PopulatedShellings}.

Recall our conventions for summation: we write $f_R := \sum_{x\in R} f(x)$ for 
 sums of  values of some function $f$ over 
set some set $R$;  We may also write $f_{m..n}=\sum_{k=m}^n f(k)$.

\begin{definition} Given $\nu,C > 0$, we say that $\popfunc:H\to\{0,1,\ldots\}$ {\em realizes density $\nu$ up to error $C$} if the following conditions hold.
\begin{itemize}
\item $\popfunc(v)=0\Leftrightarrow \mu(v)=0$.
\item For any finite region $R\subset H$, 
\begin{sumsubscripts} $ \left|\popfunc_{R} - \nu\ 
{
 \mu_{R}}\right|\leq C  \mu_{\vspace{.1em}\partial R}$ \end{sumsubscripts}. 
\end{itemize}
\end{definition}

\begin{lem}\label{densityOnHorofunctions}
For any  $\nu>1$ there exists a function
$$\popfunc:H\to\{0\}
\cup[\lfloor\nu\rfloor..\lceil\nu\max_{a\in\Aa}\mu(a)\rceil]$$ that realizes $\nu$ with error 2.

\end{lem}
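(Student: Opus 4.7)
My plan is to reduce the problem on the horosphere to a one-dimensional discrepancy problem using a translation-like $\Zz$-action on the divergence graph. First, I will invoke Proposition~\ref{translationLikeActionThm} on the divergence graph of $H^+$---which is connected by Lemma~\ref{LemmaDivGraphConnected}, of bounded degree by Lemma~\ref{LemmaEdgesOfLength2Delta}, and infinite (being $4\delta$-dense in the infinite horosphere $H$ by Lemma~\ref{Lemma:H+is2deltadense})---to obtain the translation-like $\Zz$-action $\psi$ with defect at most $L$ already fixed at the start of Section~\ref{Section:PopulatedShellings}. By Remark~\ref{RemarkAboutPaths} this partitions $H^+$ into bi-infinite orbits $(v^{(\alpha)}_n)_{n\in\Zz}$, with consecutive orbit vertices at divergence-graph distance at most $L$.

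On each orbit I will fix a reference vertex $v^{(\alpha)}_0$, set $T^{(\alpha)}_n := \sum_{k=1}^{n} \mu(v^{(\alpha)}_k)$ for $n \geq 0$ and $T^{(\alpha)}_n := -\sum_{k=n+1}^{0} \mu(v^{(\alpha)}_k)$ for $n<0$, and define
\[
\popfunc(v^{(\alpha)}_n) := \lfloor \nu\, T^{(\alpha)}_n \rfloor - \lfloor \nu\, T^{(\alpha)}_{n-1} \rfloor,
\]
extending by $\popfunc(v):=0$ for $v \in H \setminus H^+$; the equivalence $\popfunc(v)=0 \iff \mu(v)=0$ is then immediate. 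Since $\mu(v^{(\alpha)}_n) \geq 1$ (by the normalization of $\mu$) and $\nu>1$, a short calculation shows that the telescoped difference lies in $\{\lfloor \nu\mu(v^{(\alpha)}_n)\rfloor,\lceil \nu\mu(v^{(\alpha)}_n)\rceil\}$, hence in $[\lfloor\nu\rfloor,\lceil\nu\max_{a\in\Aa}\mu(a)\rceil]$ as required.

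For the density bound, I will take a finite $R \subset H$ and first replace it by $R \cap H^+$, since points off $H^+$ contribute $0$ to both $\popfunc_R$ and $\mu_R$ and $\partial R$ is defined via the divergence graph on $H^+$ anyway. I then intersect $R$ with each orbit, decomposing $R$ into a finite collection of maximal ``runs'' $(v^{(\alpha)}_a,\dots,v^{(\alpha)}_b)$. On each such run, telescoping gives $\sum_{k=a}^b \popfunc(v^{(\alpha)}_k) = \lfloor \nu T^{(\alpha)}_b \rfloor - \lfloor \nu T^{(\alpha)}_{a-1} \rfloor$, which differs from $\nu\sum_{k=a}^b \mu(v^{(\alpha)}_k)$ by at most $1$. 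Because $\psi$ has defect $\leq L$, the vertex $v^{(\alpha)}_{a-1}$ sitting immediately before each run lies in $\nbhd R \setminus R = \partial R$, and sending each run to this left-neighbor is an injection into $\partial R$; hence there are at most $\#\partial R$ runs and
\[
|\popfunc_R - \nu\,\mu_R| \leq \#\partial R \leq \mu_{\partial R} \leq 2\mu_{\partial R},
\]
where the middle inequality uses $\mu \geq 1$ on $H^+$.

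The only non-routine ingredient is the translation-like action itself, which is already delivered by Proposition~\ref{translationLikeActionThm}; everything else is the standard Weyl-style discrepancy estimate for floors of partial sums. The one bookkeeping point I want to be careful about is that distinct runs yield distinct left-neighbors in $\partial R$, but this is immediate from $\psi$ being a bijection with defect $\leq L$, so I do not expect any real obstacle.
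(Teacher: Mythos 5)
Your proof is correct and takes essentially the same route as the paper's: both use the translation-like $\Zz$-action on the divergence graph to decompose $H^+$ into orbits, define $\popfunc$ as successive floor-differences of the $\nu$-scaled partial sums of $\mu$ along each orbit, and control the error over a finite $R$ by counting maximal runs via an injection of runs into $\partial R$. The only cosmetic difference is that you fix the additive constant at $0$ (the paper allows an arbitrary basepoint $\oo\in\Rr$) and you correctly note the per-run error is at most $1$ rather than the paper's looser ``at most $2$''; both suffice for the stated error bound.
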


\begin{proof} By Proposition~\ref{translationLikeActionThm}, there is a $\Zz$-action $\psi$ on the divergence graph in $H^+$, with $\dist(\psi(g),g)\leq L$. 
Let $\Lambda\subset H^+$ be a set of orbit representatives and for each 
 $\alpha\in\Lambda$  define $p_\alpha:\Zz\to H^+$ as $p_\alpha(n)=\psi^{n}(\alpha)$. Of course the  images of these $p_\alpha$ are disjoint and cover $H^+$.
 
 \def\oo{{\ast}}
Choosing arbitrary basepoint $\oo\in\Rr$, we define $\popfunc$ on $H$: 
On $H\setminus H^+$, 
 we define $\popfunc$ to be identically $0$.  
 Each $v\in H^+$ may be written uniquely as  some   $p_\alpha(n)$. As in the illustration below, we define $\popfunc$ on  $H^+$, abbreviating $\popfunc(p_\alpha(n))$, $\mu(p_\alpha(n))$ as $\popfunc_n$ and $\mu_n$. 
 
 \begin{sumsubscripts}
% \begin{figure}
\psfrag{a}[r][r]{$\oo\in\Rr$}
\psfrag{b}[r][r]{$+\nu\,\mu_2$}
\psfrag{c}[r][r]{$+\nu\,\mu_1$}
\psfrag{d}[r][r]{$+\nu\,\mu_0$}
\psfrag{e}[r][r]{$-\nu\,\mu_{-1}$}
\psfrag{f}[r][r]{$-\nu\,\mu_{-2}$}
\psfrag{g}[r][r]{$-\nu\,\mu_{-3}$}
\psfrag{h}{}
\psfrag{i}[c][c]{$-3$}
\psfrag{j}[c][c]{$-2$}
\psfrag{k}[c][c]{$-1$}
\psfrag{n}[c][c]{$n=0$}
\psfrag{l}[c][c]{$1$}
\psfrag{m}[c][c]{$2$}
\psfrag{p}[c][c]{$3$}
\psfrag{q}[l][l]{$\popfunc_2=\lfloor \oo+\nu\,\mu_{0..2}\rfloor-\lfloor \oo+\nu\, \mu_{0..1}\rfloor$}
\psfrag{r}[l][l]{$\popfunc_1=\lfloor \oo+\nu\, \mu_{0..1}\rfloor-\lfloor \oo+\nu\, \mu_{0}\rfloor$}
\psfrag{s}[l][l]{$\popfunc_0=\lfloor \oo+\nu\, \mu_{0}\rfloor-\lfloor \oo \rfloor$}
\psfrag{t}[l][l]{$\popfunc_{-1}=\lfloor \oo\rfloor-\lfloor \oo-\nu\, \mu_{-1}\rfloor$}
\psfrag{u}[l][l]{$\popfunc_{-2}=\lfloor \oo-\nu\, \mu_{-1}\rfloor-\lfloor \oo-\nu\, \mu_{-2..-1}\rfloor$}
\psfrag{v}[l][l]{$\popfunc_{-3}=\lfloor \oo-\nu\, \mu_{-2..-1}\rfloor-\lfloor \oo-\nu\, \mu_{-3..-1}\rfloor$}
\psfrag{w}{$p_\alpha(0)$}
\psfrag{P}{$\psi$}
   
 \includegraphics[width=4in]{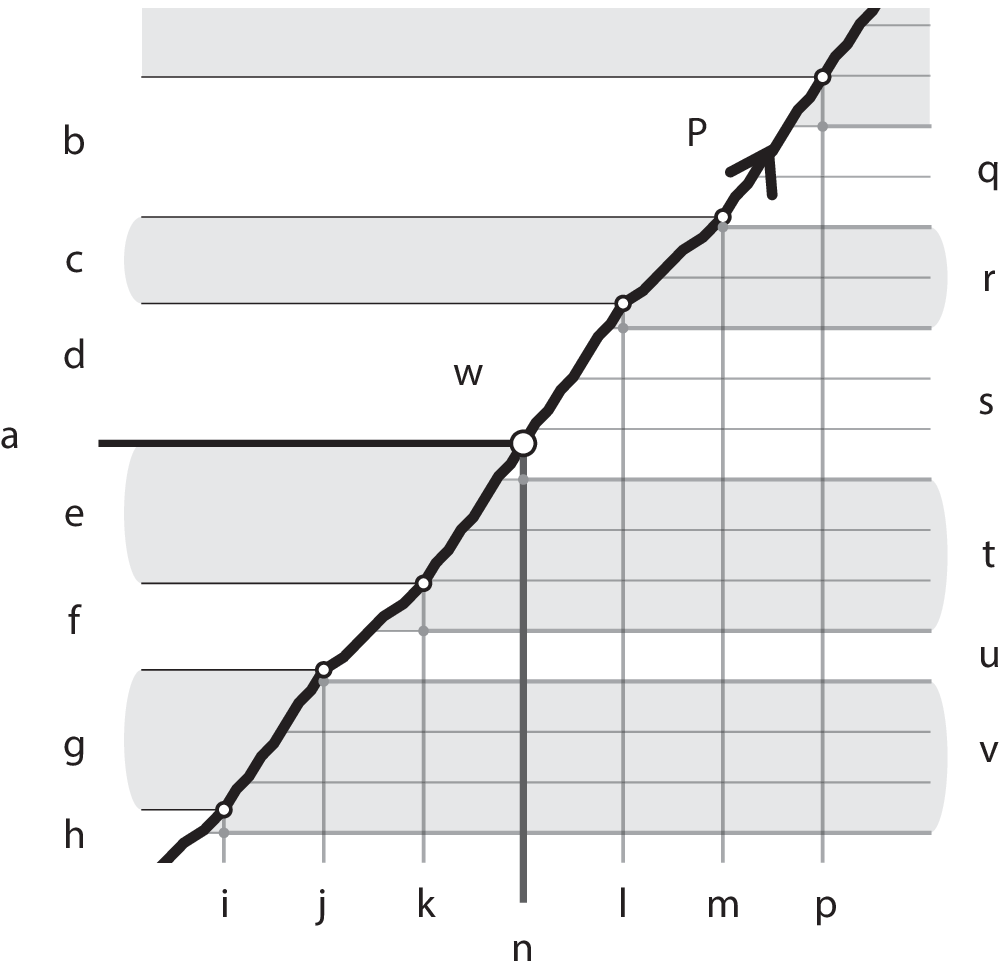}

$$\popfunc(v)=\popfunc(p_\alpha(n))=\popfunc_n := \left\{\begin{array}{ll}
\lfloor \oo+\nu\,\mu_{0..n}\rfloor-\lfloor \oo+\nu\,\mu_{0..(n-1)}\rfloor  & n>0\\ \\
\lfloor \oo+\nu\,\mu_{0}\rfloor-\lfloor \oo \rfloor  & n=0 
\\ \\ \lfloor \oo\rfloor-\lfloor \oo-\nu\,\mu_{-1}\rfloor & n = -1
\\ \\
 \displaystyle 
 \lfloor \oo-\nu\,\mu_{(n+1)..-1}\rfloor-\lfloor \oo-\nu\,\mu_{n..-1}\rfloor & n<-1 \end{array}\right.$$

 Note that for any $v\in G$, $\popfunc(v)$ has the form $$(\lfloor \nu\;\mu(v)+x\rfloor-\lfloor x\rfloor)\in\{0\}
\cup[\lfloor\nu\rfloor..\lceil\nu\max_{a\in\Aa}\mu(a)\rceil]$$ for some $x\in\Rr$.

By telescoping, along any finite interval of an orbit under $\psi$,

 $$ \left|  \popfunc_{m..n} - \nu\ \mu_{m..n}\right| < 2$$

\end{sumsubscripts}

We observe that $R\cap H^+$ is the disjoint union of maximal  sets of the form $p_\alpha(a..b)$.  
Because $\psi$ is $L$-Lipshitz,  between $p_\alpha(a)$ and $p_\alpha(a-1)$  the distance in the 
divergence graph is at most $L$ and so $p_\alpha(a-1)\in\partial R$.
 
 \vspace{\baselineskip}
 \centerline{\includegraphics[scale=.4]{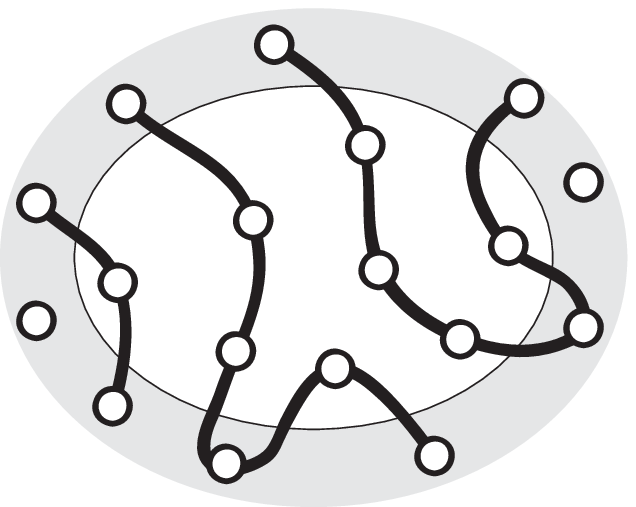}}
 
Consequently,  as indicated in the figure above,  there are at most $\#\partial R\cap H^+$ such maximal $p_\alpha(a..b)$ covering $R$, 
each contributing at most 2 to the error of $\popfunc$.  Since $\#\partial R\cap H^+\leq\mu(\partial R)$, we have that  \begin{sumsubscripts} $$ \left|\popfunc_{R} - \nu\ 
{
 \mu_{R}}\right|\leq 2  \mu_{\vspace{.1em}\partial R}$$ \end{sumsubscripts} and $\popfunc$ realizes $\nu$ up to error 2. 
\end{proof}

\begin{definition}
Given  a sequence $(\nu_i)\subset \Rr^{\Zz}$, a function $f:G\to\Nn$ {\em realizes $(\nu_i)$ up to error $C$} if for any $i\in\Zz$, the restriction of $\popfunc$ to $H_i$ realizes $\nu_i$ up to error $C$.
\end{definition}

We thus interpret Lemma ~\ref{densityOnHorofunctions} as:

\begin{cor}\label{corollary:ErrorRealizable} For any $A\geq 1$,  any sequence  
$(\nu_i)\in [A,qA]^\Zz$  is realized up to error 2 by some function $\displaystyle \popfunc:G\to \{0\}\cup[\lfloor A \rfloor.. \lceil qA\max_{a\in\Aa}\mu(a)\rceil]$.
\end{cor}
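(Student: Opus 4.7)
The plan is to deduce this corollary directly from Lemma~\ref{densityOnHorofunctions} by applying the lemma one horosphere at a time and then pasting the resulting functions together. The horospheres $H_i = h^{-1}(i)$ partition $G$, so a function $\popfunc : G \to \Nn \cup \{0\}$ is determined by its restrictions $\popfunc|_{H_i}$, and by definition $\popfunc$ realizes $(\nu_i)$ up to error $2$ iff each restriction realizes $\nu_i$ up to error $2$ on $H_i$.

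For each $i \in \Zz$, I would apply Lemma~\ref{densityOnHorofunctions} to the horosphere $H_i$ with $\nu := \nu_i$. The hypothesis $\nu > 1$ of the lemma is satisfied since $\nu_i \geq A \geq 1$ (strictly, we need $A>1$; otherwise note that $\nu_i \geq A = 1$ is still admissible because the proof of the lemma only uses $\nu > 0$ to ensure a nonempty range and the telescoping error bound). This yields a function $\popfunc_i : H_i \to \{0\} \cup [\lfloor \nu_i \rfloor .. \lceil \nu_i \max_{a\in\Aa}\mu(a)\rceil]$ realizing $\nu_i$ with error $2$.

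Define $\popfunc : G \to \Nn \cup \{0\}$ by $\popfunc|_{H_i} := \popfunc_i$. Since $\nu_i \in [A, qA]$, monotonicity of $\lfloor \cdot \rfloor$ and $\lceil \cdot \rceil$ gives $\lfloor A \rfloor \leq \lfloor \nu_i \rfloor$ and $\lceil \nu_i \max_a \mu(a) \rceil \leq \lceil qA \max_a \mu(a) \rceil$, so the image of each $\popfunc_i$ is contained in $\{0\} \cup [\lfloor A \rfloor .. \lceil qA \max_{a \in \Aa} \mu(a) \rceil]$, and hence so is the image of $\popfunc$.

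There is essentially no obstacle here: the main work has already been done in Lemma~\ref{densityOnHorofunctions} (which handles a single horosphere via the translation-like $\Zz$-action on the divergence graph). The only thing to check is the bookkeeping showing that the range containment $[\lfloor \nu_i \rfloor .. \lceil \nu_i \max_a \mu(a) \rceil] \subseteq [\lfloor A \rfloor .. \lceil qA \max_a \mu(a) \rceil]$ holds uniformly in $i$, which is immediate from $A \leq \nu_i \leq qA$.
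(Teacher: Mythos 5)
Your proof is correct and is exactly what the paper has in mind: the paper treats the corollary as an immediate reinterpretation of Lemma~\ref{densityOnHorofunctions}, obtained by applying the lemma horosphere-by-horosphere and pasting, which is what you do. One small note: where you say the lemma's proof ``only uses $\nu>0$,'' it in fact needs $\nu\geq 1$ (together with the normalization that $\mu\geq 1$ on $G^+$) to guarantee $\popfunc>0$ on $H^+$, but this is still satisfied at the boundary case $\nu_i=A=1$, so your conclusion stands.
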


\begin{definition}
\label{defineSturmianesque}
For any fixed $A\geq 1$, we say that a  sequence   $(\nu_i, \density_i)_{i\in\Zz}$ is  {\em balanced} 
if it satisfies
$\displaystyle \nu_{i+1}=\frac{q^{\density_i}}{\lambda}\nu_i$ where 
$$\density_i=\left\{\begin{array}{ll}
 \lceil \log_q\lambda \rceil & \textrm{for $\nu_i\in [A,\frac{\lambda}{q^{\lfloor \log_q\lambda\rfloor} }A)$}\\
  \lfloor \log_q\lambda \rfloor & \textrm{for $\nu_i\in[\frac{\lambda}{q^{\lfloor \log_q\lambda\rfloor} }A,qA)$}  
 \end{array}\right.$$

\end{definition}

For any $\nu_0\in[A,qA)$, note there is  a unique balanced sequence $(\nu_i, \density_i)_{i\in\Zz}$.

In any balanced sequence, we drop the subscript if the context is clear.

The following Proposition ensures that local errors in the distribution of populations may be redistributed from horosphere to horosphere within  bounded domains.

\begin{prop}\label{prop:Balancing}Suppose that $\displaystyle \lfloor A\rfloor >(2q+2)\max_{a\in \Aa}\mu(a)$. For any balanced sequence $(\nu_i,\density_i)$, and $\displaystyle \popfunc:G\to \{0\}\cup[\lfloor A \rfloor.. \lceil qA\max_{a\in\Aa}\mu(a)\rceil]$ realizing $(\nu_i)\in [A,qA]^\Zz$   up to error 2, 
 there exists a bijection 
$$\Psi:\{(v,j,k):v\in H_i,\ j\in [1..\popfunc_v],k\in [1..q^{\density_i}]\}\leftrightarrow
 \{(u,l):u\in H_{i+1}, l\in [1..\popfunc_u]\}$$ 
 such that if $\Psi(v,j,k)=(u,l)$ then $P(u)\in \nbhd\{v\}$. \end{prop}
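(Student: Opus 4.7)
The plan is to encode the desired bijection as a perfect matching in a locally finite bipartite graph and invoke the infinite version of Hall's Marriage Theorem.

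Let $B_i$ be the bipartite graph with left vertex set $V_L:=\{(v,j,k):v\in H_i^+,\ 1\leq j\leq\popfunc(v),\ 1\leq k\leq q^{\density_i}\}$, right vertex set $V_R:=\{(u,l):u\in H_{i+1}^+,\ 1\leq l\leq\popfunc(u)\}$, and an edge joining $(v,j,k)$ to $(u,l)$ precisely when $P(u)\in\nbhd\{v\}$. A perfect matching of $B_i$ is exactly the bijection $\Psi$ sought. Since $\popfunc$ is uniformly bounded (Corollary~\ref{corollary:ErrorRealizable}), $\#\nbhd\{v\}$ is uniformly bounded (Lemma~\ref{LemmaBoundsOnNL} and Lemma~\ref{LemmaEdgesOfLength2Delta}), and the out-degree of $P$ is bounded by the shortlex machine $\Mm$, the graph $B_i$ is locally finite with uniformly bounded vertex degree. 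For such graphs a perfect matching exists whenever Hall's condition holds on both sides; this is the standard extension of the marriage theorem, proved by exhausting $B_i$ by finite subgraphs, applying finite Hall, and invoking K\"onig's lemma to the resulting inverse system.

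It therefore suffices to verify Hall's condition on both sides. On $V_L$ this reduces to showing $q^{\density_i}\popfunc_R\leq\popfunc_{P^{-1}\nbhd R}$ for every finite $R\subseteq H_i^+$, and symmetrically on $V_R$ to $\popfunc_{R'}\leq q^{\density_i}\popfunc_{\nbhd P(R')}$ for every finite $R'\subseteq H_{i+1}^+$. Combining the realization bound $|\popfunc_S-\nu_i\mu_S|\leq 2\mu_{\partial S}$, the eigenvector identity $\mu_{P^{-1}S}=\lambda\mu_S$ of Remark~\ref{remark:sum}, and the balanced equation $q^{\density_i}\nu_i=\lambda\nu_{i+1}$, a direct telescoping computation gives
\[
\popfunc_{P^{-1}\nbhd R}-q^{\density_i}\popfunc_R\;\geq\;q^{\density_i}(\nu_i-2)\,\mu_{\partial R}\;-\;2\,\mu_{\partial(P^{-1}\nbhd R)},
\]
with an analogous inequality on $V_R$. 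As $\nu_i\geq A$ and $\lfloor A\rfloor>(2q+2)\max\mu$, the first term is very large; both Hall inequalities will then follow from a universal boundary comparison of the form $\mu_{\partial(P^{-1}\nbhd R)}\leq C\,q^{\density_i}\mu_{\partial R}$ for a constant $C$ depending only on $q$, $\delta$, and $\#\genset$.

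The main obstacle is establishing this boundary comparison. The key tool should be the predecessor structure of divergence graphs noted in the outline preceding Lemma~\ref{LemmaDivGraphConnected}: each edge in the divergence graph of $H_{i+1}^+$ has as its predecessor under $P$ either a vertex or an edge in the divergence graph of $H_i^+$. This lets one pull back $L$-neighborhoods in $H_{i+1}^+$ to bounded neighborhoods in $H_i^+$; combined with Lemma~\ref{LemmaBoundsOnNL} (converting divergence-graph neighborhoods into word-metric neighborhoods of bounded size) and the $\lambda$-multiplication of $\mu$-mass under $P^{-1}$, this yields the required inequality. The hypothesis $\lfloor A\rfloor>(2q+2)\max\mu$ then absorbs the constant $C$, making both Hall inequalities positive and producing the perfect matching $\Psi$.
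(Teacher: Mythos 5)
Your overall framework is exactly right: encode the matching as a perfect matching in the bipartite graph, reduce to Hall's conditions on both sides, and verify those conditions using the realization bound, the eigenvector identity for $\mu$, and the balance equation. This matches the paper's strategy. But the specific estimate you propose does not close under the stated hypothesis on $A$, and this is a genuine gap.

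The problem is that you apply the realization bound to the large set $P^{-1}\nbhd R$, producing the error term $2\mu_{\partial(P^{-1}\nbhd R)}$. You then need a boundary comparison $\mu_{\partial(P^{-1}\nbhd R)}\leq Cq^{\density_i}\mu_{\partial R}$. Such a comparison does hold, but the best constant one can extract is on the order of $d^{2L}\cdot\max\mu/\min\mu$, where $d$ is the maximal degree of the divergence graph and $L$ is the defect of the translation-like action (to bound $\mu_{\partial\nbhd R}$ against $\mu_{\partial R}$ one has to cover $\partial\nbhd R$ by bounded-radius balls around $\partial R$). To absorb this you would need $A\gtrsim 2 + 2q\cdot d^{2L}\cdot\max\mu/\min\mu$, which is far stronger than the stated hypothesis $\lfloor A\rfloor>(2q+2)\max_{a\in\Aa}\mu(a)$. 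So your route proves a weaker statement, not the proposition as written. The paper avoids this entirely: it splits $\popfunc_{P^{-1}\nbhd R}=\popfunc_{P^{-1}R}+\popfunc_{P^{-1}\partial R}$, applies the realization bound only to $P^{-1}R$, uses the inclusion $\partial(P^{-1}R)\subset P^{-1}\partial R$ (which follows from divergence edges pushing forward under $P$ to an edge or a coincident pair) to convert the error term $2\mu_{\partial(P^{-1}R)}$ into $2\mu_{P^{-1}\partial R}$, and then absorbs the total error $(2q+2)\mu_{P^{-1}\partial R}$ into $\popfunc_{P^{-1}\partial R}$ via the pointwise bound $(2q+2)\mu(v)\leq\popfunc(v)$, which is exactly what $\lfloor A\rfloor>(2q+2)\max\mu$ guarantees. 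You never need to compare $\mu_{\partial\nbhd R}$ with $\mu_{\partial R}$. A second, smaller gap: for the reverse Hall inequality on $V_R$, the inclusion you need, $\partial(PR')\supset P\partial R'$, fails in general; the paper first replaces $R'$ by its sibling closure $P^{-1}PR'$ (which leaves $\nbhd PR'$ unchanged and can only enlarge $\popfunc_{R'}$) before the inclusion becomes valid. Your sketch does not address this.
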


 \begin{proof}
We begin by describing a technique for producing bijections like the one we want. Let $\mathcal{G}$ be a locally finite bipartite graph with vertex partition $M\sqcup W$. A perfect matching for $\mathcal{G}$ is a collection $\mathcal{M}$ of edges of $\mathcal{G}$ such that every vertex of $\mathcal{G}$ belongs to exactly one edge from $\mathcal{M}$ \cite[\S H.2]{ccs}. We say that $\mathcal{G}$ satisfies the Hall conditions \cite[Definition H.3.1]{ccs} if for every finite $R$ which is a subset of $M$ or $W$, the set of vertices which are $\mathcal{G}$-neighbors of $R$ is at least as big as $R$. By \cite[Theorem H.3.6]{ccs}, $\mathcal{G}$ admits a perfect matching if and only if it satisfies the Hall conditions. 

In our case, we take $M$ to be $\{(v,j,k):v\in H,\ j\in [1..\popfunc_v],k\in [1..q^{\density}]\}$ and $W$ to be $\{(u,l):u\in H_{i+1}, l\in [1..\popfunc_u]\}$, with an edge of $\mathcal{G}$ connecting $(v,j,k)$ and $(u,l)$ whenever $v\in\nbhd P^{-1}\{u\}$. If $\mathcal{M}$ were a perfect matching for $\mathcal{G}$, then we could define the desired bijection $\Psi$ by taking $\Psi(v,j,k)$ to be the unique vertex $(u,l)$ of $W$ such that $(v,j,k)$ and $(u,l)$ span an edge of $\mathcal{M}$. It follows that we only need to verify that $\mathcal{G}$ satifies the Hall conditions.

For any $v\in H$, any  $(v,j,k),(v,j',k')\in M$ have the same $\mathcal{G}$-neighbors in $W$. Hence, if $R$ is a subset of $M$, then the number of $\mathcal{G}$-neighbors of $R$ (in $W$) is equal to the number of $\mathcal{G}$-neighbors of
$$\{(v,j,k)\in M:\exists (v,j',k')\in R\}\supset R.$$
Similar considerations apply when we wish to bound the number of $\mathcal{G}$-neighbors of a finite $R\subset W$. It follows that we only need to establish, for finite $R\subset H$, that 
\begin{sumsubscripts}$$q^{\density} \ \popfunc_R \leq  \popfunc_{P^{-1}\nbhd R}$$\end{sumsubscripts}
and for finite $R\subset H_{i+1}$ that
\begin{sumsubscripts}$$q^{-\density}\popfunc_{R}\leq \popfunc_{\nbhd PR}$$\end{sumsubscripts}
{In fact, for $R\subset H_{i+1}$ we have that $R\subseteq P^{-1}PR$ and $\nbhd PR=\nbhd PP^{-1}PR$, so we only need to check the latter inequality for sets of the form $P^{-1}PR$}.
 \begin{sumsubscripts}

Intuitively, $\popfunc_R$ is close to $\nu\,\mu_R$ and
 \begin{sumsubscripts}$\popfunc_{P^{-1}R}$\end{sumsubscripts} is close to $q^{\density}\nu\,\mu_R$, with the error controlled by $\mu_{\partial R}$, so we must show that \begin{sumsubscripts}$ \popfunc_{P^{-1}\partial R}$\end{sumsubscripts} is large enough to accomodate this error (because $A$ was chosen suitably large).

To show our desired inequalities,  we will need the following  identities on any finite $R,T\subset H$:

(1)  $\popfunc_R{\leq}2\mu_{\partial R}+\nu\,\mu_R$ and (1') $\nu\,\mu_R{\leq}2\mu_{\partial R}+\popfunc_R$,  because by Lemma~\ref{corollary:ErrorRealizable}, $\popfunc$ realizes $\nu\,\mu$ up to error 2. 
\\ \\

(2) $\mu_T=\frac{1}{\lambda}\,\mu_{P^{-1}T}$ and (2') $\lambda\,\mu_{PT}\geq \mu_T$: 
From the definition of $\mu$ and shortlex shelling, we have that
$$\sum_{P(b)=a} \mu(b) = \lambda \mu(a)$$ 
holds, giving~(2) directly.  For~(2') observe that, in addition, $\mu_{P^{-1}PT}\geq \mu_T$ holds.
\\ \\

(3) $\displaystyle q^\density/ \lambda < q$ and  (3') $\displaystyle   \lambda q^{-\density}< q$ by the definition of each $\density$ in a balanced sequence.
\\ \\
 If $v,w\in H_{i+1}$ are connected by a divergence edge, then $\dist(P^{-n}\{v\},P^{-n}\{w\})=O(1)$ and so  $\dist(P^{-n}\{P(v)\},P^{-n}\{P(w)\})=O(1)$. Therefore $P(v)$ and $P(w)$ either coincide or are connected by a divergence edge. It follows that   $\nbhd P^{-1} R\subset P^{-1} \nbhd  R$ and $\nbhd PR\supset P \nbhd  R$. 

\vspace{\baselineskip}
{
\psfrag{b}{$\in P^{-1}R$}
\psfrag{a}{$\in R$}
\psfrag{c}{$\in \nbhd P^{-1}R\subset P^{-1}\nbhd R$}
\psfrag{d}{$\in \nbhd R$} \includegraphics[width=1.5in]{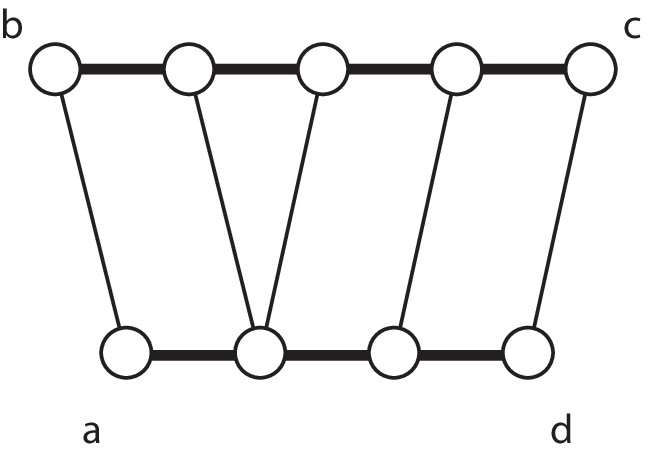}
\hspace{2in}
\psfrag{b}{$\in R$}
\psfrag{a}{$\in PR$}
\psfrag{c}{$\in \nbhd R$}
\psfrag{d}{$\in \nbhd P R\subset P\nbhd R$} \includegraphics[scale=.6]{HGSFT_diagram2}
}

As a consequence of the first inclusion (4) $\partial P^{-1}R\subset P^{-1}\partial R$. The second inclusion implies   $P\partial R\subset \nbhd P R$. If in addition if $R$ is ``sibling closed'', satisfying
$R=P^{-1}PR$, then we have (4') $\partial P R\supset P\partial R$. Sibling closed is necessary as indicated in the following diagram:

\vspace{\baselineskip}
\centerline{
\psfrag{a}[r][r]{$ \partial P R \not\ni $}
\psfrag{d}{$\in P\partial R\cap \nbhd P R$}
\psfrag{b}{$\in R$}
\psfrag{c}{$\in \partial R$}
 \includegraphics[scale=.6]{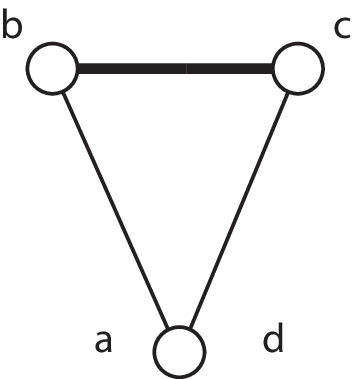}}

(5)  $(2q+2)\mu_T \leq \popfunc_T$ since for any  $v\in H^+$, $\popfunc(v)\geq \lfloor{\nu}\rfloor\geq A\geq(2q+2)\max_{\Aa}\mu(a)$ and $\popfunc(v)=0=\mu(v)$ otherwise.

We define $\nu':=\nu_{i+1}$, so that $\nu'={q^\density}/{\lambda}\nu$ and conversely $\nu={\lambda}{q^{-\density}}\nu'$.

For the $\rightarrow$ map, we need, for finite $R{\subset} H$, that
 $$q^{\density} \  \popfunc_{R} \leq  \popfunc_{P^{-1}\nbhd R} $$

$$
\begin{array}{rcll}
q^{\density} \popfunc_R &\leq & q^{\density}(2\mu_{\partial R}+\nu\,\mu_R) & \text{\hspace{1in} by (1).}
\\ \\
& = & \displaystyle \frac{q^{\density}}{\lambda}(2\mu_{P^{-1}\partial R}+\nu\,\mu_{P^{-1}R}) &\text{\hspace{1in} by (2).}
\\ \\
& \leq & 2q \,\mu_{P^{-1}\partial R}+\nu'\,\mu_{P^{-1}R} &\text {\hspace{1in} by (3) and definition of $\nu'$.}
\\ \\
& \leq & 2q\,\mu_{P^{-1}\partial R}+2\,\mu_{\partial P^{-1}R} +\popfunc_{P^{-1}R} &\text{\hspace{1in} by (1').}
\\ \\ 
& \leq & 2q\,\mu_{P^{-1}\partial R}+ 2\,\mu_{P^{-1}\partial R} +\popfunc_{P^{-1}R} & \text{\hspace{1in} by (4).}
\\ \\
& \leq & \popfunc_{P^{-1}\partial R}+\popfunc_{P^{-1}R} & \text{\hspace{1in} by (5).}
\\ \\ 
& = & \popfunc_{P^{-1}\nbhd R} & \text{\hspace{1in} as desired.}
\end{array}$$

To find an injection in the other direction, we need that  for finite $R\subset H'$

$$q^{-\density} \popfunc_R\leq \popfunc_{\nbhd P R}$$

We replace $R$ with its sibling closure $P^{-1}PR$;  the left hand side of the inequality cannot decrease and the right does not change, thus establishing the inequality for all $R$. 
We compute:

$$
\begin{array}{rcll}
q^{-\density} \popfunc_R&\leq &q^{-\density}(2\,\mu_{\partial R}+\nu'\,\mu_R)&\text{\hspace{1in} by (1).}
\\ \\
&\leq& q^{-\density}(2\lambda\, \mu_{P\partial R}+\nu'\lambda\,\mu_{PR})  &\text{\hspace{1in} by (2').}
\\ \\
&\leq& 2\lambda q^{-\density} \,\mu_{\partial PR}+\nu\,\mu_{PR}    &\text{\hspace{1in} by (4') and definition of $\nu$.}
\\ \\
&\leq&2q\mu_{\partial PR} + 2\,\mu_{\partial PR}+\popfunc_{PR}&\text{\hspace{1in} by (3') and (1').}\\ \\
&\leq& \popfunc_{\partial PR}+\popfunc_{PR}  &\text{\hspace{1in} by (5).}\\ \\
&=& \popfunc_{\nbhd P R}  &\text{\hspace{1in} as desired }
\end{array}$$

\end{sumsubscripts}

This completes the proof of Proposition~\ref{prop:Balancing}
\end{proof}

\begin{prop}\label{Cor:PopShellingsExist} For some $N$, there exists a populated shelling $X$ with population bound $N$ and growth by powers of $q$, and so the SFT $\Sigma$ is non-empty. 
\end{prop}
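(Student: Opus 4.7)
The plan is to assemble a populated shelling by stitching together the three preceding results: a shortlex shelling exists by Proposition~\ref{proposition:LocalXisSFT}, populations realizing prescribed densities exist by Corollary~\ref{corollary:ErrorRealizable}, and a parent-child matching exists by Proposition~\ref{prop:Balancing}. The only ``choice'' step is picking a balanced density sequence and a population bound $N$ that makes the matching step applicable.

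In detail, I would first fix a shortlex shelling $X=(h,\state,P)$ whose existence is given by Proposition~\ref{proposition:LocalXisSFT}. Next, pick $A\geq 1$ large enough that $\lfloor A\rfloor > (2q+2)\max_{a\in\Aa}\mu(a)$, and choose any $\nu_0\in[A,qA)$; this determines a unique balanced sequence $(\nu_i,\density_i)_{i\in\Zz}$ in the sense of Definition~\ref{defineSturmianesque}. Define $\density:G\to\{\lfloor\log_q\lambda\rfloor,\lceil\log_q\lambda\rceil\}$ by $\density(g):=\density_{h(g)}$, so that $\density$ is automatically constant on each horosphere $H_i$ as required.

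By Corollary~\ref{corollary:ErrorRealizable} applied to the sequence $(\nu_i)\subset[A,qA]$, there is a function
$\popfunc:G\to\{0\}\cup[\lfloor A\rfloor..\lceil qA\max_{a\in\Aa}\mu(a)\rceil]$ realizing $(\nu_i)$ up to error $2$; in particular $\popfunc(g)=0\iff\mu(g)=0$. Set $N:=\lceil qA\max_{a\in\Aa}\mu(a)\rceil$ so that $\popfunc$ takes values in $[0..N]$ as demanded by Definition~\ref{definition:populatedshelling}. Because $A$ was chosen to satisfy the hypothesis of Proposition~\ref{prop:Balancing}, for each $i\in\Zz$ that proposition supplies a bijection
\[
\Psi_i:\bigl\{(v,j,k):v\in H_i,\, j\in[1..\popfunc(v)],\, k\in[1..q^{\density_i}]\bigr\}
\longleftrightarrow
\bigl\{(u,l):u\in H_{i+1},\, l\in[1..\popfunc(u)]\bigr\}
\]
with the property that whenever $\Psi_i(v,j,k)=(u,l)$ one has $P(u)\in\nbhd\{v\}$, i.e.\ $u\in P^{-1}\nbhd(v)$.

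Define $\match$ by patching these bijections together, setting $\match(v,j,k):=\Psi_{h(v)}(v,j,k)$ for every $v\in G^+$ (the domain is empty when $v\notin G^+$ since then $\popfunc(v)=0$). Then $\match_G(v,j,k)\in P^{-1}\nbhd(v)$ by construction, and $\match$ is a bijection on the full domain because each $\Psi_i$ is a bijection between the ``children-slots'' in $H_i$ and the ``people'' in $H_{i+1}$ and these slices partition the whole domain and codomain. Thus $(h,\state,P,\popfunc,\density,\match)$ satisfies every clause of Definition~\ref{definition:populatedshelling}, exhibiting a populated shelling and showing $\sftP\neq\emptyset$. Since all the heavy lifting---realizing densities, and matching parents to children---was already done in Lemma~\ref{densityOnHorofunctions} and Proposition~\ref{prop:Balancing}, there is no genuine obstacle here; the only care needed is to verify that the choice of $A$ makes the hypothesis of Proposition~\ref{prop:Balancing} hold simultaneously with the constraints placed on the range of $\popfunc$ by Corollary~\ref{corollary:ErrorRealizable}.
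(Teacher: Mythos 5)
Your proposal is correct and is exactly the argument the paper intends; the paper's published proof is just the one-line choice of parameters ``take $A > (2q+2)\max_{a\in\Aa}\mu(a)$ and $N > \lceil qA\max_{a\in\Aa}\mu(a)\rceil$,'' with the assembly of $\popfunc$, $\density$, and $\match$ from Corollary~\ref{corollary:ErrorRealizable} and Proposition~\ref{prop:Balancing} left implicit. You have simply written out in full the stitching that the paper suppresses.
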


\begin{proof} Take  $\displaystyle A>(2q+2)\max_{a\in\Aa}\mu(a)$ and $\displaystyle N>\lceil qA\max_{a\in\Aa}\mu(a)\rceil$.
\end{proof}

\section{Aperiodicity}
\label{subsection:aperiodicity}

Any infinite hyperbolic group admits a  shortlex shelling $X$ such that $\local X$ is periodic  --- for example take a horofunction 
with axis defined by a cycle in a shortlex FSA.  By contrast, Proposition \ref{Prop:Aperiodic} shall show that for a populated shelling $X$ on a one-ended hyperbolic group, $\local X$ cannot have an infinite order period, completing the proof of our main theorem. The idea is to show that any period of $\local X$ would induce a period of the ``growth sequence'' $\density_i:=\density(h^{-1}(i))$ (this follows from Lemma \ref{lemma:shortlexPeriodic}), and then show that periods of the growth sequence cannot exist (Corollary \ref{Cor:DensityNonperiodic}).

We begin by showing that any infinite order period, say 
$\local{X} \cdot \pi = \local{X}$ for $\pi \in G$, translates horospheres to
horospheres and does not fix any horosphere:

\begin{lem}\label{lemma:shortlexPeriodic}
Given a shortlex shelling $X=(h,\state,P)$, if $\local{X}$ is periodic under some 
infinite order element $\pi\in G$, then $h(\pi g)=h(g)+C_\pi$ for some nonzero constant 
$C_\pi\in\Zz$.
\end{lem}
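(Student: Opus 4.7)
The statement has two parts: that $g \mapsto h(\pi g) - h(g)$ is a constant $C_\pi$, and that $C_\pi \neq 0$. For the first, I would exploit that $\local X \cdot \pi = \local X$ forces $\deriv h(\pi g) = \deriv h(g)$ for every $g \in G$. Writing $h_2(g) := h(\pi g)$, the computation
\[
\deriv h_2(g)(s) = h(\pi g s) - h(\pi g) = \deriv h(\pi g)(s) = \deriv h(g)(s)
\]
shows that $h$ and $h_2$ share the same derivative, so by Lemma~\ref{lemma:DrivativeDeterminesUpToConstant} they differ by a constant $C_\pi \in \Zz$.

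For the second part, I would suppose $C_\pi = 0$ and derive a contradiction. Then $h \circ \pi^k = h$ for every $k$, so the infinite orbit $\{\pi^k : k \in \Zz\}$ (infinite because $\pi$ has infinite order) sits inside the single horosphere $H := h^{-1}(h(1_G))$. Since balls in $G$ are finite, $|\pi^k| \to \infty$ as $|k| \to \infty$. For any geodesic $\gamma_k$ from $1_G$ to $\pi^k$, whose endpoints lie at the same $h$-level, Lemma~\ref{lemma:DipSomeMore} yields $h(\gamma_k(x)) \leq h(1_G) - (x - 2\delta)$ whenever $|\pi^k| > 2x + 2\delta$. Invoking the standard fact that an infinite-order element of a hyperbolic group has two \emph{distinct} fixed points $\xi^+ \neq \xi^-$ on $\partial G$, with $\pi^{\pm k} \to \xi^\pm$, I would apply Lemma~\ref{lemma:arzela} to extract subsequential limits $\gamma^\pm$ of $(\gamma_{\pm k})$: these are geodesic rays from $1_G$ representing $\xi^\pm$ and still satisfying $h(\gamma^\pm(x)) \leq h(1_G) - (x - 2\delta)$ for all $x$.

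The final step, which I expect to be the main obstacle, is to show that a horofunction can only decrease linearly along geodesic rays converging to \emph{one} particular boundary point, yielding $\xi^+ = \xi^-$ and thus a contradiction. Using Lemma~\ref{lem:horofunctions}, write $h(g) = \lim_n(\dist(g, g_n) - \dist(g_n, g_0))$ with $g_n \to \xi_h \in \partial G$ (after subsequencing), and normalize $g_0 = 1_G$. For any geodesic ray $\gamma^*$ from $1_G$ with $[\gamma^*] \neq \xi_h$, a standard $\delta$-hyperbolicity argument bounds the Gromov product of $\gamma^*(m)$ and $g_n$ at $1_G$ uniformly for large $m,n$, giving $\dist(\gamma^*(m), g_n) \geq m + \dist(g_n, 1_G) - O(1)$ and therefore $h(\gamma^*(m)) \geq m - O(1) \to +\infty$. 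This contradicts the dipping bound for $\gamma^\pm$, so $[\gamma^+] = \xi_h = [\gamma^-]$, contradicting $\xi^+ \neq \xi^-$. The hard parts are this Gromov-product identification of $\xi_h$ and the distinctness of the two boundary fixed points of $\pi$; both are standard in the theory of hyperbolic groups but each warrants brief verification in the style of the lemmas in Section~\ref{subsectionHyperbolicGroups}.
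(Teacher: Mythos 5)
Your proof of the first half (that $h(\pi g)-h(g)$ is constant) is exactly the paper's argument. For the second half (that $C_\pi\neq 0$), you take a genuinely different route. The paper argues via quasi-geodesic stability: if $C_\pi=0$, the orbit $\{\pi^k\}$ is a quasi-geodesic confined to a single horosphere, hence (by the Morse lemma, \cite[Corollary III.$\Gamma$.3.10]{BridsonHaefliger99}) there is a bi-infinite geodesic at bounded Hausdorff distance from it, on which $h$ is bounded; but Lemma~\ref{lemma:DipSomeMore} forces $h$ to decrease linearly along any sufficiently long geodesic segment with endpoints at equal $h$-level, an immediate contradiction. You instead pass to the boundary: you produce geodesic rays $\gamma^\pm$ from $1_G$ to the two distinct fixed points $\xi^\pm$ of $\pi$ on $\partial G$, show via Lemma~\ref{lemma:DipSomeMore} that $h$ dips linearly along both, and then contradict this with the claim that a horofunction can increase linearly along any ray not asymptotic to its associated boundary point. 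Your route is correct, but it is heavier: it invokes two additional standard facts about hyperbolic groups that the paper never needs for this lemma --- that every infinite-order element is loxodromic with two \emph{distinct} fixed points on $\partial G$, and the Gromov-product estimate showing $h(\gamma^*(m))\geq m-O(1)$ when $[\gamma^*]\neq\xi_h$ --- each of which would require its own short lemma in the style of Section~\ref{subsectionHyperbolicGroups}. The paper's argument is shorter and more self-contained: it needs only the Morse lemma and the already-established Lemma~\ref{lemma:DipSomeMore}, and avoids any discussion of $\partial G$ or Gromov products. Both arguments are sound; yours illuminates the relationship between the horofunction and the boundary, while the paper's buys brevity and minimal external input.
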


\begin{proof}
Write $h\cdot \pi$ for $g\mapsto h(\pi g)$, so that
$$\deriv (h\cdot \pi)=(\deriv h)\cdot \pi=\deriv h.$$ 
We see that $h$ and $h\cdot \pi$ differ by a constant, i.e., there is some $C_\pi\in\Zz$ such that $h(\pi g)=h(g)+C_\pi$.

We may see that $C_\pi\neq 0$ as follows. Without loss of generality, let $0=h(1_G)$. 
 If $C_\pi=0$, then $\ldots,\pi^{-1},1_G,\pi,\pi^2,\ldots$ is a quasi geodesic lying in a horosphere~\cite[Corollary III.$\Gamma$.3.10]{BridsonHaefliger99}. Hence, there must be some geodesic $\gamma:\Zz\To G$ such that $\gamma(\Zz)$ is at finite Hausdorff distance $N$ from this quasi geodesic, and in particular, $h\circ\gamma$ attains only values in $[-N..N]$. For any $R$ we may find a geodesic $\gamma$ of length $R$ in the $N$-neighborhood of $h^{-1}(0)$, connecting $1_G$ to some $g'$.   For $R>2N+2\delta$, by Lemma~\ref{lemma:DipSomeMore}, no such geodesic exists. 
 \end{proof}
 
\begin{lem} \label{LemmaDesc} Let $X=(h,\state,P,\density,m)$ be a populated shelling.  For  any horosphere $H$ and any $v\in H^+$, there is some finite  $S\subset H$ such that all the descendants of villagers in $v$ lie in $P^{-*}S$, the future of $S$. Furthermore there 
 is some finite  $S'\subset H$ so that every villager living in $P^{-*}S$ is descended from a villager living in $S'$.
\end{lem}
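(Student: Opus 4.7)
The plan is to combine quasi-geodesic stability with the slim-triangles condition in the hyperbolic group $G$ to show that the $P^k$-image of any descendant of $v$ remains within a bounded word-distance of $v$, uniformly in $k$; then $S$ can be taken as a word-metric ball of suitable radius around $v$ intersected with $H$. For the first claim, fix $v\in H^+$ and let $w$ be a $k$-th generation descendant of some villager at $v$, realized by a lineage $v=w_0,w_1,\dots,w_k=w$ in which each $w_{i+1}$ is a child of a villager at $w_i$, so $P(w_{i+1})\in\nbhd(w_i)$. By Lemma~\ref{LemmaBoundsOnNL} we have $d(w_i,w_{i+1})\leq C:=2\delta L+1$, while $h(w_i)=h(v)+i$; thus the lineage is a $(C,0)$-quasi-geodesic whose $1$-Lipschitz horofunction gains exactly one at each step.

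The key step is to show $d(v,P^k(w))\leq C'$ for a constant $C'=C'(C,\delta,v)$ not depending on $k$ or $w$. Consider the geodesic triangle $1_G,v,w$: the point $P^k(w)$ lies on the shortlex geodesic from $1_G$ to $w$ at horofunction level $h(v)$. By the $\delta$-slim triangles condition, $P^k(w)$ is within $\delta$ of either the side from $1_G$ to $v$---in which case $d(v,P^k(w))\leq 2\delta$ immediately---or the side from $v$ to $w$ at some parameter $t$ from $v$ with $t\leq d(v,w)-k+\delta$. The remaining task is to bound $d(v,w)-k$ by a $k$-independent constant. By the Morse lemma the lineage stays within Hausdorff distance $D(C,\delta)$ of any geodesic from $v$ to $w$, and combined with the $1$-Lipschitz monotone horofunction progress along the lineage, together with a slim-quadrilateral comparison (Lemma~\ref{lemma:thinquads}) between the shortlex geodesics from $1_G$ to $v$ and from $1_G$ to $w$, one concludes $d(v,w)\leq k+C'$. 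This gives $d(v,P^k(w))\leq C'+2\delta$, and we take $S:=B(C'+2\delta,v)\cap H$.

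The second claim follows by a dual argument. For each $u\in S$ and any villager at $w'\in P^{-n}\{u\}$, the ancestral lineage from $w'$ back to its starting villager in $H$ is again a $(C,0)$-quasi-geodesic with monotone $1$-Lipschitz horofunction; the same hyperbolic estimate bounds the starting villager within word-distance $C''$ of $u$, so $S':=B(C'',v)\cap H$ (a finite enlargement of $S$) contains every such starting point.

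The main obstacle is the uniform bound $d(v,w)\leq k+O(1)$: the naive chain estimate $d(P^i(w_i),P^{i+1}(w_{i+1}))\leq 2\delta L$ only yields the linear bound $d(v,P^k(w))\leq 2\delta L k$, while the improvement to a $k$-independent bound is a genuinely hyperbolic phenomenon, reflecting the geometric-series decay of ``horizontal shifts'' visible in the upper-half-plane model, and depends essentially on combining Morse stability with slim-quadrilateral comparisons rather than on the combinatorial commutation $P(\nbhd(u))\subseteq\nbhd(P(u))$ alone.
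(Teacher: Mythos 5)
Your instinct to bound $\dist(v,P^k(w))$ uniformly via quasi-geodesic stability is the right idea, and you correctly identify the lineage $w_0,\dots,w_k$ as a $(2\delta L+1,0)$-quasi-geodesic with strictly monotone $h$. But the paper's argument avoids your main obstacle entirely. Rather than trying to bound $\dist(v,w)-k$ and then running a slim-triangle case analysis, the paper extends the lineage past $v$ all the way to a basepoint by following $P_0$ (after transporting everything to the model shelling $X_0$ using the local-modeling property of shortlex shellings), obtaining a quasi-geodesic $\gamma$ and a genuine geodesic $\gamma'$ \emph{with the same two endpoints}. The Morse lemma then directly gives that $v\in\gamma$ is within a uniform $R$ of some $v''\in\gamma'$; since $\gamma'$ carries the $P$-geodesic and $h$ increases by one per step along it, $v''$ is within $R$ of $v'=P^n(u)$, and the bound $\dist(v,v')\leq 2R$ falls out with no case analysis and no need for the intermediate estimate $\dist(v,w)\leq k+O(1)$.

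Your proposal has two genuine gaps. First and most importantly, you flag the uniform bound $\dist(v,w)\leq k+O(1)$ as ``the main obstacle'' and then assert it follows from ``Morse stability with slim-quadrilateral comparisons'' without carrying it out; but that assertion is the entire content of the lemma, and merely saying the lineage stays near some geodesic from $v$ to $w$ does not by itself control the length of that geodesic --- you would additionally need an argument in the style of Lemma~\ref{lemma:DipSomeMore} exploiting that the lineage never dips below the horosphere $H$, which you do not give. Second, your slim-triangle setup uses $1_G$ as the apex and identifies the $P$-orbit of $w$ with ``the shortlex geodesic from $1_G$ to $w$.'' This is true only for the model shelling $X_0$; for a general shortlex shelling $X$ the map $P$ flows to a boundary point $\xi$, not to $1_G$, and the argument requires first invoking Definition~\ref{definition:shortlexshelling} to model a large ball containing the lineage on $X_0$ via some $g\in G$. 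The paper does exactly this; your write-up omits it, so even the parts you do prove are only established for $X=X_0$.
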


\begin{proof}
We write $\pi_G$ for the projection  from $G\times\Nn\to G$. Write $Q(u,l)=(v,j)$ where $(v,j,k)$ is the unique triple such that $m(v,j,k)=(u,l).$

We will show that there is a universal constant $R$ so that for any villager $(u,l)$ and $n\in\Nn$, we have $\dist(\pi_G(Q^{n}(u,l)), P^n(u))\leq 2R$. The proposition will follow: For any $v$ in any $H^+$, take $S$ to be the $2R$-neighborhood of $v$ and let $S'$ be the $2R$-neighborhood of $S$.

Suppose that $(u,l)$ is a descendant of a villager at $v$, i.e., that $\pi_G(Q^n(u,l))=v$ for some $n>0$ and $l\in [1..\popfunc(u)]$. Let $v'=P^n(u)$ and take $B$ be a ball containing $\{P^{k}u\}_{k=0}^n \cup \{\pi_G Q^k(u,l)\}_{k=0}^n$.

By the definition of  a shortlex shelling, 
$B$ is modeled in $X_0$; that is there exists $g\in G$ such that 
$$\local{X_0}\cdot g|_{v^{-1}B}
=\local(h,\state,P)\cdot v|_{v^{-1}B}$$

\centerline{
\psfrag{g}{$\gamma'$} 
\psfrag{h}{$\gamma$} 
\psfrag{w}{$v'$}
\psfrag{v}{$v$}
\psfrag{r}{$R$}
\psfrag{u}{$u$}
\psfrag{y}{$v''$}
\psfrag{z}{$vg^{-1}$}
\includegraphics[scale=.6]{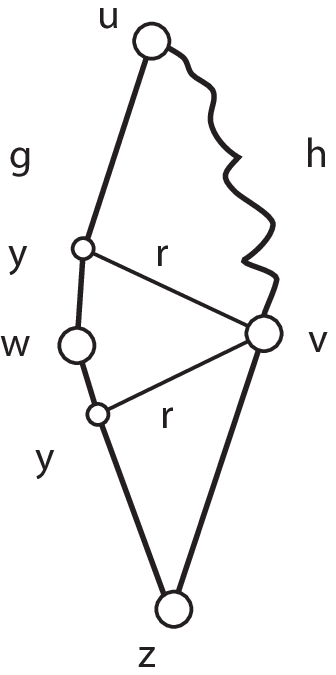}}
Let $\gamma'$ be the geodesic given by $\gamma'(k)=vg^{-1}P_0^k(gv^{-1}u)$, $k\in [0..(n+|g|)]$.

 Let $\gamma$ be the path defined by:
 For $k\in [0..n]$, take $\gamma(k)=\pi_G Q^k(u,l)$.  For $k\in [n..(n+|g|)]$, $\gamma(k)=vg^{-1}P_0^k(g)$.
 
 Observe that
$h_0(gv^{-1}\gamma(k))=h_0(gv^{-1}u)-k$ and therefore $\dist(\gamma(i),\gamma(j))\geq |i-j|$.

We claim that 
$\dist(\gamma(k),\gamma(k+1))\leq 2\delta L+1$. 
For $k< n$, edges of the divergence graph have length at most $2\delta$ (Lemma~\ref{LemmaEdgesOfLength2Delta}) 
and (by Definition~\ref{definition:populatedshelling} of a populated shelling) $m_G(v,j,k)$ lies in $P^{-1}$ of the $L$-neighborhood  of $v$
 in the divergence graph in the horosphere $h^{-1}(h(v))$. For  $k\geq n$, $\dist(\gamma(k)$,$\gamma(k+1))=1$.

It follows that 
$$|i-j|=|h(\gamma(i))-h(\gamma(j))|
\leq \dist(\gamma(i),\gamma(j))\leq (2\delta L +1)|i-j|$$
for $i,j\in [0..(n+|g|)]$.
Then  $\gamma$ is a {$(2\delta L+1,0)$}-quasi geodesic, as defined in~\cite[Definition I.8.22]{BridsonHaefliger99}.

We have that  $\gamma(0)=\gamma'(0)=u$ and $\gamma(n+|g|)=\gamma'(n+|g|)=vg^{-1}$. 
By Theorem III.1.7 of~\cite{BridsonHaefliger99} every point of $\gamma$ is within a universal bound, which is 
denoted there by $R=R(\delta,2\delta L+1,0)$, of
some point of $\gamma$.  In particular, $v \in \gamma$ is within $R$ of some point
$v'' \in \gamma'$.  By the triangle inequality,
$\dist(v',v'') \leq \dist(v'',v)$.  Combining these facts we conclude that  
$\dist(v,v') \leq 2R$.
\end{proof}

{Recall that, if $X=(h,\state,P,\popfunc,\density,\match)$ is a populated shelling, then $\density$ is 
constant on horospheres. Write $\Delta_i$ for the value achieved by $\density$ on $h^{-1}\{i\}$.  
We will refer to $(\Delta_i)_{i\in\Zz}$ as the growth sequence of $X$.}

\begin{cor}
In a populated shelling, there exists a non-empty finite set $S\subset H^+$ such that we have the following.
\begin{enumerate}
\begin{sumsubscripts}
\item $\log( \popfunc_{P^{-n}S})=n\log(\lambda)+O(1)$
\item $\log( \popfunc_{P^{-n}S})=\sum_{i=1}^n \density_i\log(q)+O(1)$.
\end{sumsubscripts}
\end{enumerate}
\end{cor}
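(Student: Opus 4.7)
The plan is to choose $S$ as furnished by Lemma~\ref{LemmaDesc}, then establish (2) by a direct descendant-counting argument and deduce (1) from (2) using the balanced sequence relation. Fix any $v_0 \in H^+$ (possible by Proposition~\ref{proposition:dense}) and apply Lemma~\ref{LemmaDesc} at $v_0$ to obtain finite sets $S, S' \subset H$ with $v_0 \in S \subset S'$; since $v_0\in H^+$ we have $\popfunc(v_0)\geq 1$, so $S$ contains at least one villager and $\popfunc_{S'}$ is a positive constant.

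For (2), the lower bound tracks the descendants of a single villager. The villager $(v_0,1)$ has exactly $q^{\Delta_{h(v_0)}}$ distinct children, each of whom has exactly $q^{\Delta_{h(v_0)+1}}$ distinct children, and so on; since $\match$ is a bijection (each child has a unique parent), distinct pedigrees give rise to distinct descendants, so $(v_0,1)$ has exactly $q^{\sum_{i=1}^n \Delta_i}$ distinct generation-$n$ descendants. By Lemma~\ref{LemmaDesc}, all descendants of $(v_0,1)$ live in $P^{-\ast}S$, and since their villages lie in the horosphere at height $h(v_0)+n$ they in fact live at villages in $P^{-n}S$, yielding $\popfunc_{P^{-n}S} \geq q^{\sum \Delta_i}$. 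For the upper bound, Lemma~\ref{LemmaDesc} says that every villager in $P^{-n}S \subset P^{-\ast}S$ descends from some villager in $S'$; since the height difference is exactly $n$, this descent is at generation $n$, and each villager in $S'$ has exactly $q^{\sum \Delta_i}$ generation-$n$ descendants, hence $\popfunc_{P^{-n}S} \leq \popfunc_{S'} \cdot q^{\sum \Delta_i}$. Taking logarithms proves (2) with implicit constant $\log \popfunc_{S'}$.

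To derive (1) from (2), iterate the defining relation $\nu_{i+1} = (q^{\Delta_i}/\lambda)\,\nu_i$ of Definition~\ref{defineSturmianesque} to obtain
$$\prod_{i=1}^n \frac{q^{\Delta_i}}{\lambda} \;=\; \frac{\nu_{h(v_0)+n}}{\nu_{h(v_0)}}.$$
Since every $\nu_i$ lies in $[A, qA]$, this ratio lies in $[1/q, q]$, so taking logs gives $\sum_{i=1}^n \Delta_i \log q - n \log \lambda = O(1)$, which combined with (2) yields (1). There is no substantive obstacle here; the only care needed is to verify that the generation-$n$ descendants of $(v_0,1)$ are pairwise distinct villagers landing precisely in $P^{-n}S$, and that the villagers counted in the upper bound match up with generation-$n$ descendants of $S'$; both follow immediately from the bijectivity of $\match$ and the fact that $P$ decreases horospherical height by exactly one.
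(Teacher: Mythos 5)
Your argument for item (2) is essentially the paper's: choose $S$ and $S'$ via Lemma~\ref{LemmaDesc}, count descendants of one villager for the lower bound and count descendants of $\popfunc_{S'}$ villagers for the upper bound, using bijectivity of $\match$ to avoid overcounting. That part is fine.

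However, your derivation of (1) from (2) has a genuine gap: you invoke the relation $\nu_{i+1}=(q^{\Delta_i}/\lambda)\nu_i$ from Definition~\ref{defineSturmianesque}, but a \emph{balanced sequence} is not part of the data of a populated shelling. Definition~\ref{definition:populatedshelling} only requires $\density$ to be constant on horospheres with values in $\{\lfloor\log_q\lambda\rfloor,\lceil\log_q\lambda\rceil\}$; there is no auxiliary $(\nu_i)$ attached to $X$, and no requirement that $(\Delta_i)$ arise as the $\density$-component of a balanced sequence. Balanced sequences appear only in the \emph{existence} argument (Proposition~\ref{prop:Balancing} and Proposition~\ref{Cor:PopShellingsExist}), and this corollary must hold for an arbitrary populated shelling, not merely the ones constructed there. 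Indeed, the constraint $\sum_{i=1}^n\Delta_i\log q - n\log\lambda = O(1)$ that your derivation assumes is precisely the non-trivial \emph{conclusion} one gets by combining (1) and (2), so assuming it as input is circular.

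The paper instead proves (1) directly and independently of (2): since $\mu$ is an eigenvector for the successor transition, $\mu_{P^{-n}S}=\lambda^n\mu_S$, and because $\popfunc$ and $\mu$ have finite ranges with identical zero-sets, there are constants $c_1,c_2>0$ with $c_1\popfunc(v)\leq\mu(v)\leq c_2\popfunc(v)$ for all $v$; summing over $P^{-n}S$ gives $\log\popfunc_{P^{-n}S}=n\log\lambda+O(1)$. You should replace your derivation of (1) with this argument (which uses only Remark~\ref{remark:sum} and the condition $\popfunc(g)=0\Leftrightarrow\mu(g)=0$ from the definition).
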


\begin{proof}
\begin{sumsubscripts}
Note $\mu_{P^{-n}S}  = \lambda^n\mu_S$.  The functions $\mu$ and $\popfunc$ have finite non-negative ranges, 
and have identical zero-sets. Therefore, there are constants $c_1,c_2>0$ such that for any $v\in G$, 
$c_1 {\popfunc(v)} \leq{\mu(v)}\leq c_2 {\popfunc(v)}$. Consequently 
$ c_1 {\popfunc_{P^{-n}S}} \leq{\mu_{P^{-n}S}}\leq c_2 {\popfunc_{P^{-n}S}}$ and the first equality (1) follows. 

By Lemma~\ref{LemmaDesc} there exists some $v$ such that all the descendants of villagers in $v$ lie in $P^{-*}S$, 
the future of $S$ and so $\popfunc_{P^{-n}S}\geq \left(q^{\sum_1^n{\density_i}}\right)\popfunc_v$, the number of 
such descendants. 

Lemma~\ref{LemmaDesc}  further shows there is some finite $S'\in H$ so that every villager living in $P^{-*}S$ is descended 
from a villager living in $S'$, and so $\popfunc_{P^{-n}S}\leq q^{\sum_1^n{\density_i}}\popfunc_{P^{-n}S'}$.  
Together these inequalities give (2).
\end{sumsubscripts}
\end{proof}

\begin{cor} \label{Cor:DensityNonperiodic}
The growth sequence in a populated shelling is not periodic. 
\end{cor}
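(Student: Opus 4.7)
The plan is to combine the two estimates from the preceding corollary and derive a rational relation between $\log\lambda$ and $\log q$ that contradicts our incommensurability hypothesis.

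First I would equate the two asymptotic expressions for $\log(\popfunc_{P^{-n}S})$ to obtain
\[
\sum_{i=1}^{n} \Delta_i \log q \;=\; n\log \lambda + O(1),
\]
and divide by $n$ to conclude that
\[
\frac{1}{n}\sum_{i=1}^{n}\Delta_i \;\longrightarrow\; \frac{\log \lambda}{\log q}
\quad\text{as } n\to\infty.
\]
This identifies the Cesaro average of the growth sequence as the (a priori irrational) number $\log\lambda / \log q$.

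Next I would assume, for contradiction, that $(\Delta_i)$ is periodic with some period $p \in \Nn$. Then for each $n$ the partial sum $\sum_{i=1}^{np}\Delta_i$ equals $n\sum_{i=1}^{p}\Delta_i$, so the Cesaro average along the subsequence $np$ is exactly $\frac{1}{p}\sum_{i=1}^{p}\Delta_i$, a rational number $r/p$ with $r := \sum_{i=1}^{p}\Delta_i \in \Zz$. Combining with the limit above gives $\log\lambda / \log q = r/p$, equivalently $\lambda^p = q^r$. But this directly contradicts the choice of $q$ made at the beginning of Section~\ref{Section:PopulatedShellings}, where we fixed $q\in\{2,3\}$ so that no power of $q$ is a power of $\lambda$.

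The only subtlety is confirming that the $O(1)$ error terms in both estimates do not matter: since they are additive constants independent of $n$, dividing by $n$ makes them vanish in the limit, so the argument goes through without further care. There is no real obstacle here; the work has already been done in establishing the two-sided bounds on $\popfunc_{P^{-n}S}$ via Lemma~\ref{LemmaDesc}, and what remains is the short arithmetic argument above.
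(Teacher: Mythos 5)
Your proof is correct and follows essentially the same route as the paper's: combine the two asymptotic estimates from the preceding corollary, use periodicity to turn the Cesàro average into an exactly rational number, and contradict the incommensurability of $\log q$ and $\log\lambda$. The only cosmetic difference is that you divide by $n$ before restricting to the subsequence $n = pk$, whereas the paper substitutes $n = pk$ first and then divides by $k$; the arithmetic is identical.
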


\begin{proof} Suppose the growth sequence sequence $(\density_i)$ is periodic, with period $p\in\Nn$.  
Let $\hat\density=\sum_{i=1}^{p} \density_i$. For any $k\in\Nn$, taking $n=pk$, we have 
$pk\log(\lambda)+O(1)=\hat\density k\log(q)$ and thus
$p\log(\lambda)+O(1/k)=\hat\density\log(q)$.
As $k\to\infty$, $$\log(q)/\log(\lambda)=p/\hat\density\in\Qq$$ a contradiction to our choice of $q$ with respect to $\lambda$.

\end{proof}

\begin{prop}\label{Prop:Aperiodic}{Let $\local X$ be the local data for  a populated shelling 
$X=(h,\state,P,\popfunc,\density,\match)$. Then the stabilizer of  $\local X$ in $G$ contains no infinite order element.}
\end{prop}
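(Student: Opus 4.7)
The plan is to derive a contradiction from the supposition that some infinite order $\pi\in G$ stabilizes $\local X$, by showing that such a $\pi$ would force the growth sequence $(\Delta_i)$ to be periodic, contradicting Corollary~\ref{Cor:DensityNonperiodic}.

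Suppose for contradiction that $\pi\in G$ has infinite order and $\local X\cdot\pi=\local X$. First I note that since $X=(h,\state,P,\popfunc,\density,\match)$ is in particular a shortlex shelling, and the local data $\local X$ includes $\deriv h$ as a coordinate, the stabilizer of $\local X$ is contained in the stabilizer of $\local{(h,\state,P)}$. Therefore Lemma~\ref{lemma:shortlexPeriodic} applies and yields a nonzero constant $C_\pi\in\Zz$ with
\[
h(\pi g)=h(g)+C_\pi \quad\text{for all } g\in G.
\]
Equivalently, left multiplication by $\pi$ sends the horosphere $H_i=h^{-1}(i)$ bijectively onto the horosphere $H_{i+C_\pi}$.

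Next I use that $\density$ is itself a coordinate of the local data $\local X$. From $\local X\cdot\pi=\local X$ I read off $\density(\pi g)=\density(g)$ for every $g\in G$. By definition of a populated shelling, $\density$ is constant on horospheres; write $\density\equiv\Delta_i$ on $H_i$. Combining these two facts, for every $g\in H_i$,
\[
\Delta_{i+C_\pi}=\density(\pi g)=\density(g)=\Delta_i,
\]
so the growth sequence $(\Delta_i)_{i\in\Zz}$ is periodic with period $|C_\pi|>0$. This directly contradicts Corollary~\ref{Cor:DensityNonperiodic}, which asserts that the growth sequence of a populated shelling is not periodic (because our choice of $q$ makes $\log(q)/\log(\lambda)$ irrational). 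Hence no infinite order $\pi$ stabilizes $\local X$.

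The argument is essentially a bookkeeping one once the earlier lemmas are in hand: the only subtle point is that I must invoke both components of the local data that are preserved by $\pi$—the horofunction derivative (to get that $\pi$ shifts horospheres by a nonzero amount via Lemma~\ref{lemma:shortlexPeriodic}) and the density function $\density$ (to transfer this shift into a period of the sequence $(\Delta_i)$). All the real work—establishing that $(\Delta_i)$ cannot be periodic—has already been done via the two-sided estimate on $\popfunc_{P^{-n}S}$ (growth like $\lambda^n$ from the measure side, like $q^{\sum\Delta_i}$ from the descendant-counting side) together with the incommensurability of $q$ and $\lambda$.
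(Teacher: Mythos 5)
Your proof is correct and takes essentially the same route as the paper: invoke Lemma~\ref{lemma:shortlexPeriodic} to get the nonzero horosphere shift $C_\pi$, observe that invariance of $\local X$ forces $\density(\pi g)=\density(g)$, conclude $(\Delta_i)$ is $C_\pi$-periodic, and contradict Corollary~\ref{Cor:DensityNonperiodic}. You spell out the transfer from $\density$-invariance to periodicity of the growth sequence a bit more explicitly than the paper does, but the argument is the same.
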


\begin{proof}
{Suppose $\pi$ is in the stabilizer of $\local X$, so that $\local X\cdot \pi=\local X$, and  $\pi$ 
has infinite order. By Lemma~\ref{lemma:shortlexPeriodic}, there is a nonzero $C_\pi\in\Zz$ 
such that $h(\pi g)=h(g)+C_\pi$. Writing $(\density_i)$ for the growth sequence of $X$, it follows 
that $\density_{h(g)}=\density_{h(g)+C_\pi}$ for every $g\in G$, and hence $(\density_i)$  is 
$C_\pi$-periodic, in contradiction to Lemma~\ref{Cor:DensityNonperiodic}.}\end{proof}

%
%\nocite{*}
%
%\bibliographystyle{alpha}
%\bibliography{HyperbolicGroupsAdmitSASFT}

\newcommand{\etalchar}[1]{$^{#1}$}

\end{document}